\newcommand{\supp}{{\rm supp}}
\newcommand{\hphi}{\widehat{\phi}}  
\newcommand\be{\begin{equation}}
\newcommand\ee{\end{equation}}
\newcommand\bea{\begin{eqnarray}}
\newcommand\eea{\end{eqnarray}}
\newcommand\bi{\begin{itemize}}
\newcommand\ei{\end{itemize}}
\newcommand\ben{\begin{enumerate}}
\newcommand\een{\end{enumerate}}
\newcommand\bc{\begin{center}}
\newcommand\ec{\end{center}}
\newcommand\ba{\begin{array}}
\newcommand\ea{\end{array}}
\def\notdiv{\ \mathbin{\mkern-8mu|\!\!\!\smallsetminus}}
\newcommand{\R}{\ensuremath{\mathbb{R}}}
\newcommand{\Q}{\mathbb{Q}}
\newcommand{\foh}{\frac{1}{2}}  
\newtheorem{thm}{Theorem}[section]
\newtheorem{lem}[thm]{Lemma}
\newtheorem{rek}[thm]{Remark}
\newcommand{\twocase}[5]{#1 \begin{cases} #2 & \text{{\rm #3}}\\ #4
&\text{{\rm #5}} \end{cases}   }
\newcommand{\fourcase}[9]{#1 \begin{cases} #2 & \text{{\rm #3}}\\ #4
&\text{{\rm #5}}\\ #6 & \text{{\rm #7}}\\ #8 & \text{{\rm #9}} \end{cases}   }
\newcommand{\da}{\downarrow}
\newcommand{\gep}{\epsilon}
\newcommand{\gl}{\lambda}
\numberwithin{equation}{section}
\begin{document}

\title{An Orthogonal Test of the $L$-functions Ratios Conjecture, II}

\author[Miller]{Steven J. Miller}\email{Steven.J.Miller@williams.edu}
\address{Department of Mathematics and Statistics, Williams College, Williamstown, MA 01267}

\author[Montague]{David Montague}\email{davmont@umich.edu}
\address{Department of Mathematics, University of Michigan, Ann Arbor, MI 48109}

\subjclass[2010]{11M26 (primary), 11M41, 15B52 (secondary).} 
\keywords{$1$-Level Density, Low Lying Zeros, Ratios Conjecture, Cuspidal Newforms}

\date{\today}

\thanks{This work was done at the 2009 SMALL Undergraduate Research Project at Williams College, funded by NSF Grant DMS-0850577 and Williams College; it is a pleasure to thank them and the other participants, as well as Brian Conrey, David Farmer, David Hansen, Wenzhi Luo, and Peter Sarnak for comments on an earlier draft. The first named author was also partly supported by NSF Grant DMS-0855257.}

\begin{abstract} Recently Conrey, Farmer, and Zirnbauer \cite{CFZ1,CFZ2} developed the $L$-functions Ratios conjecture, which gives a recipe that predicts a wealth of statistics, from moments to spacings between
adjacent zeros and values of $L$-functions. The problem with this method is that several of its steps involve ignoring error terms of size comparable to the main term; amazingly, the errors seem to cancel and the
resulting prediction is expected to be accurate up to square-root cancellation. We prove the accuracy of the Ratios Conjecture's prediction for the 1-level density of families of cuspidal
newforms of constant sign (up to square-root agreement for support in $(-1,1)$, and up to a power savings in $(-2,2)$), and discuss the arithmetic significance of the lower order terms. This is the most involved test of the Ratios Conjecture's predictions to date, as it is known that the error terms dropped in some of the steps do \emph{not} cancel, but rather contribute a main term! Specifically, these are the non-diagonal terms in the Petersson formula, which lead to a Bessel-Kloosterman sum which contributes only when the support of the Fourier transform of the test function exceeds $(-1, 1)$.

\end{abstract}

\maketitle

\tableofcontents

\section{Introduction}

\subsection{Background}

The $L$-functions Ratios Conjecture of Conrey, Farmer, and Zirnbauer \cite{CFZ1, CFZ2} has been a very strong predictive tool for computing statistics related to a wide variety of families of $L$-functions. The conjecture is essentially a general recipe for averaging the values of ratios of $L$-functions over a family. These averages can then be used to predict the answers to deep questions about the distribution of zeros and values of the $L$-functions.

The Ratios Conjecture has been able to very accurately predict a wealth of statistics related to families of $L$-functions, ranging from n-level correlations and densities to mollifiers and moments to vanishing at the
central point \cite{CS1,CS2,GJMMNPP,HuyMil,Mil3,Mil5,St}. One reason the conjecture is so useful is that it usually gives its conjectured answer within a few pages of largely straightforward calculations, as opposed to the in-depth and lengthy analysis often required to make unconditional statements about these statistics (e.g. \cite{ILS}). Moreover, the high degree of accuracy -- the Ratios Conjecture is expected to be accurate down to square-root cancelation -- allows us to isolate any significant lower order terms.

These lower order terms are of interest for several reasons. For example, the main term of these statistics is often independent of the arithmetic of the family. While Random Matrix Theory has successfully predicted these values, it misses arithmetic,\footnote{There are now many families where the main term of the 1-level density agrees with the random matrix predictions and the lower order terms differ due to arithmetic features of the families; see \cite{FI,Mil2,Mil4,MilPe,Yo1}.} which frequently has to be added in a somewhat ad hoc manner.\footnote{For another approach to modeling $L$-functions which incorporates arithmetic, see the hybrid model of Gonek, Hughes and Keating \cite{GHK}.} The Ratios Conjecture has the arithmetic of the family enter in a natural way, and its presence is felt in the lower order terms. These terms are important in studying finer convergence questions.\footnote{\label{footnote:effective} For example, at first the zeros of $L$-functions high on the critical line were modeled by  the $N\to\infty$ scaling limits of $N\times N$ complex Hermitian matrices. Keating and Snaith \cite{KeSn1,KeSn2} showed that a better model for zeros at height $T$ is given by $N\times N$ matrices with $N \sim \log(T/2\pi)$; we use this for $N$ as it makes the mean spacing between zeros and eigenvalues equal. Even better agreement (see \cite{BBLM}) has been found by replacing $N$ with $N_{\rm effective}$, where the first order correction terms are used to slightly adjust the size of the matrix (as $N\to\infty$, $N_{\rm effective}/N \to 1$).} Additionally, the Ratios Conjecture also suggests alternate ways of writing the lower order terms, and these formulations often clarify the cause of these corrections. One instance is in the lower order terms of the family of quadratic Dirichlet characters, where one of the correction terms is seen to arise from the imaginary parts of zeros of $\zeta(s)$ (see \cite{Mil3,St}).

In this paper, which is a sequel to \cite{Mil5}, we investigate families of cuspidal newforms split by sign of the functional equation. We first set some notation; see \cite{IK,ILS} for more details and proofs. Let $f \in S_k(N)$, the space of cusp forms of weight $k$ and level $N$, let $\mathcal{B}_k(N)$ be an orthogonal basis of $S_k(N)$, and let $H^\star_k(N)$ be the subset of newforms. To each $f$ we associate an $L$-function
\begin{equation}
L(s,f)\ =\ \sum_{n=1}^\infty \lambda_f(n) n^{-s}
\end{equation}
with completed $L$-function
\begin{equation}\label{eq:completed_L_func}
\Lambda(s,f) \ =\ \left(\frac{\sqrt{N}}{2\pi}\right)^s
\Gamma\left(s+\frac{k-1}{2}\right) L(s,f)  \ = \ \epsilon_f \Lambda(1-s,f),
\end{equation}
with $\epsilon_f = \pm 1$. The space
$H^\star_k(N)$ splits into two disjoint subsets, $H^+_k(N) = \{
f\in H^\star_k(N): \epsilon_f = +1\}$ and $H^-_k(N) = \{ f\in
H^\star_k(N): \epsilon_f = -1\}$. From Equation $2.73$ of \cite{ILS} we have for $N > 1$ that
\begin{equation}\label{eq:number of terms in hkpm}
|H_k^\pm(N)| \ = \ \frac{k-1}{24}N + O\left( (kN)^{5/6}
\right);
\end{equation} thus a power savings in terms of the cardinality of the family will mean errors of size $O(N^{1/2})$.
We often assume the Generalized Riemann Hypothesis (GRH), namely that all non-trivial zeros of $L(s,f)$ have real part $1/2$.

In this paper, we determine the $L$-functions Ratios Conjecture's prediction for the 1-level density for the family $H_k^\pm(N)$, with $k$ fixed and $N \to \infty$ through the primes, and we show that it agrees with number theory for suitably restricted test functions. Recall the 1-level density for a family $\mathcal{F}$ of $L$-functions is
\begin{eqnarray}
D_{1,\mathcal{F}}(\phi)\ :=\ \frac{1}{|\mathcal{F}|} \sum_{f\in
\mathcal{F}} \sum_{\ell} \phi\left(\gamma_{f,\ell}\frac{\log
Q_f}{2\pi}\right),
\end{eqnarray} where $\phi$ is an even Schwartz test function
whose Fourier transform has compact support, $\foh +
i\gamma_{f,\ell}$ runs through the non-trivial zeros of $L(s,f)$ (if GRH holds, then each $\gamma_{f,\ell} \in \R$), and $Q_f$ is the analytic conductor of $f$. As $\phi$ is an even
Schwartz functions, most of the contribution to $D_{1,\mathcal{F}}(\phi)$ arises from the zeros near the central point;\footnote{This statistic is very different than the $n$-level correlations, where we may remove arbitrarily many zeros without changing the limiting behavior. Knowing all the $n$-level correlations would give us the spacing statistics between adjacent zeros. To date we know these correlations for suitably restricted test functions for $L$-functions arising from cuspidal automorphic representations of ${\rm GL}_m/\Q$ if $m \le 3$ (and in general under additional hypotheses, such as the general Ramanujan conjectures for cusp forms on ${\rm GL}_m$). See \cite{Hej,Mon,RS,Od1,Od2} for results on $n$-level correlations and comparison of spacings between zeros and random matrix predictions.} thus, this statistic is well-suited to investigating the low-lying zeros (the zeros near the central point). Katz and Sarnak have conjectured that each family of $L$-functions  corresponds to some classical compact group which determines many properties and statistics related to the family. Specifically, for an infinite family of $L$-functions let $\mathcal{F}_N$ be the sub-family whose conductors either equal or are at most $N$. They conjecture that \be \lim_{N\to\infty} D_{\mathcal{F}_N}(\phi) \to \int \phi(x) W_{G(\mathcal{F})}(x)dx,\ee where $G(\mathcal{F})$ indicates unitary, symplectic or orthogonal (possibly ${\rm SO(even)}$ or ${\rm SO(odd)}$) symmetry; this has been observed in numerous families, including all Dirichlet characters, quadratic Dirichlet characters, $L(s,\psi)$ with $\psi$ a character of the ideal class group of the imaginary quadratic field $\mathbb{Q}(\sqrt{-D})$ (as well as more general number fields), families of elliptic curves, weight $k$ level $N$ cuspidal newforms, symmetric powers of ${\rm GL}(2)$ $L$-functions, and certain families of ${\rm GL}(4)$ and ${\rm GL}(6)$ $L$-functions (see \cite{DM1,DM2,FI,Gu,HR,HuMil,ILS,KaSa2,Mil1,MilPe,OS2,RR,Ro,Rub1,Yo2}).

We briefly summarize what is done in this paper. In the next subsection we describe the Ratios Conjecture's recipe to predict the $1$-level density for a family. We state our main results in \S\ref{sec:mainresults}, and then discuss in the next subsection why this is such an important test of the Ratios Conjecture, perhaps the most delicate one to date. We begin the main part of the paper by following the Ratios Conjecture's recipe for the family of cuspidal newforms of weight $k$ and level $N$ as $N$ tends to infinity through the primes, and determine the predicted $1$-level density for this family. We then use the Ratios Conjecture's prediction to isolate lower order terms in the $1$-level density. Finally, in \S\ref{sec:THEORY}, we elaborate on computations from \cite{ILS} to show strong agreement between theory and the Ratios Conjecture (see Theorem \ref{thm:sqrt}), which validates (for suitably restricted test functions) the computation of the lower order terms.

\subsection{The Ratio Conjecture's Recipe}

For a given family of $L$-functions $\mathcal{F}$, we are interested in estimating the quantity
\be R_\mathcal{F} (\alpha, \gamma) := \sum_{f \in \mathcal{F}} \omega_f \frac{L\left(\foh+\alpha, f\right)}{L\left(\foh+\gamma, f\right)},
\ee where the $\omega_f$ are weights specific to the family. We use this estimate to determine other statistics related to the zeros of the $L$-functions in the family of interest. To determine the $L$-functions Ratios Conjecture's prediction for this quantity, we follow several steps. We describe the recipe in general, highlighting how we apply it for our family. See \cite{CS1} for an excellent description of how to use the conjecture for a variety of problems. \\

\ben
\item We begin by using the approximate functional equation to expand the numerator $L$-function, giving two sums and an error term. In the approximate functional equation, the first sum is up to $x$, and the second is up to $y$, where $xy$ is of the size of the analytic conductor of $L(s,f)$. In following the Ratios Conjecture, we ignore the error term. As our family is cuspidal newforms of weight $k$ and level $N$, the approximate functional equation reads (see \cite{IK} for a proof) \be L(s,f)\ =\ \sum_{m \le x} \frac{a_m}{m^s} + \epsilon X_L(s) \sum_{n \le y} \frac{a_n}{n^{1-s}} + R(s,f),\ee where $R(s,f)$ denotes a remainder term (which we ignore in following the Ratios Conjecture), and $X_L$ (related to the functional equation for $L(s,f)$) is \be\label{ln:XL} X_L(s)\ =\ \left(\frac{\sqrt{N}}{2 \pi}\right)^{1 - 2s} \frac{\Gamma\left(1-s + \frac{k - 1}{2}\right)}{\Gamma\left(s + \frac{k - 1}{2}\right)}. \ee Note that $X_L(s)$ only depends weakly on $f$, as it is a function only of the level $N$ and the weight $k$.\\

\item Next, we expand the denominator $L$-function through its Dirichlet series via the generalized Mobius function $\mu_f$, where
\be \frac{1}{L(s,f)}\ =\ \sum_{h=1}^\infty \frac{\mu_f(h)}{h^s}. \ee For cuspidal newforms, $\mu_f(n)$ is the multiplicative function given by \be \fourcase{\mu_f(p^r) \ = \ }{1}{if $r=0$}{-\lambda_f(p)}{if $r=1$}{\chi_0(p)}{if $r=2$}{0}{if $r \ge 3$;} \ee here $\chi_0$ is the principal character modulo the level $N$ (so $\chi_0(p) = 1$ if $p \notdiv N$).\\

\item We now execute the sum over the family $\mathcal{F}$, using some averaging formula for the family in question. As we will be studying families of cuspidal newforms in this paper, we use the Petersson formula (see Appendix \ref{sec:PeterssonFormula} for statements). As part of the Ratios Conjecture, we drop all non-diagonal or non-main terms that arise in applying the averaging formula, and we ignore the error in doing so. The test performed in this paper is very important because the non-diagonal terms that are dropped are known to contribute a main term to the $1$-level density (see \cite{ILS}); however, we \emph{still} find agreement between theory and the $L$-functions Ratios Conjecture's prediction. We discuss this in great detail below.\\

\begin{rek} In the original formulation of the Ratios Conjecture, in Step 3 we are supposed to replace any products of signs of functional equations with their average value over the family. For families with constant sign of the functional equation, there is no difference. Even though our families are of constant sign, in our expansions above it is convenient to replace the summation over the family by sums over all cuspidal newforms of weight $k$ and level $N$ through factors such as $1 \pm \gep_f$, as this facilitates applying the Petersson formula. Following \cite{Mil5}, we consider a weaker version of the Ratios Conjecture where these terms are not dropped. The analysis is similar, and in Appendix \ref{ap:SFE} we see these terms (as predicted) do not contribute.\\
\end{rek}

\item After averaging over the family (which, in our case, is facilitated by the presence of the weights $\omega_f$), we extend the sums from the approximate functional equation to infinity. Often, we rewrite the sums as products before extending them, in which case this step is just completing the products.\\

\item In order to compute statistics related to the zeros, we typically differentiate the average with respect to the numerator $L$-function's parameter, and set both parameters ($\alpha$ and $\gamma$) equal. This gives an estimate for the logarithmic derivative of the $L$-functions averaged over the family. We note that thanks to Cauchy's integral formula, the size of the error term does not increase significantly when we differentiate (see Remark 2.2 of \cite{Mil5} for a proof).\\

\item The $1$-level density can be obtained by performing a contour integral of the differentiated average (which represents logarithmic derivative of $L(s,f)$ averaged over the family) from the previous step.\\

\een

\subsection{Main Results}\label{sec:mainresults}

We try to share notation with \cite{ILS,Mil5} as much as possible. The following infinite product arises several times in this paper and in \cite{ILS} (see their Section 7): \be \chi(s)
\ :=\ \prod_p \left(1 + \frac{1}{(p-1)p^s}\right)\ =\ \sum_{n=1}^\infty \frac{\mu^2(n)}{\varphi(n)n^s}. \ee Note the factorization given in \cite{ILS} is wrong; fortunately their factorization does give the correct main term, which is all that was studied there.

\begin{thm}\label{thm:ratios} For $R$ a constant multiple of $N$, the $L$-functions Ratios Conjecture predicts that the weighted, scaled 1-level density is equal to
\bea D_{1, H_k^{\pm} (N); R}(\phi) & = & \sum_{p} \frac{2\log p}{p\log R} \hphi \left(\frac{2\log p}{\log R}\right) + \frac{\log N}{\log R}\hphi(0) \nonumber \\
                                    & & \mp 2\lim_{\epsilon \downarrow 0} \int_{-\infty}^\infty  X_L\left(\foh + 2\pi i x\right) \chi(\epsilon + 4 \pi ix) \phi(t\log R) dt \nonumber \\
                                    & & + \frac{2}{\log R} \int_{-\infty}^{\infty}\frac{\Gamma'}{\Gamma}\left(\frac{k}{2}+\frac{2\pi it}{\log R}\right)\phi(t)dt + O(N^{-1/2+\epsilon}). \eea
\end{thm}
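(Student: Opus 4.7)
The plan is to execute the six-step Ratios Conjecture recipe on the Petersson-weighted ratio sum
\begin{equation*}
R_{H_k^\pm(N)}(\alpha,\gamma) \ := \ \sum_{f \in H_k^\pm(N)} \omega_f \frac{L(\foh+\alpha,f)}{L(\foh+\gamma,f)}.
\end{equation*}
Since $H_k^\pm(N)$ is cut out of $H_k^\star(N)$ by the indicator $\tfrac12(1\pm\epsilon_f)$, I average over all of $H_k^\star(N)$; the overall $\mp$ in the statement then arises from the $\epsilon_f$-dependent piece, the other sign having been absorbed by $\epsilon_f^2=1$. Applying the approximate functional equation to $L(\foh+\alpha,f)$ and the M\"obius expansion to $1/L(\foh+\gamma,f)$ splits $R_{H_k^\pm(N)}$ into a first piece (no $X_L$) and a second piece carrying the prefactor $\epsilon_f X_L(\foh+\alpha)$.

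Next I apply the Petersson formula and retain only the diagonal term, dropping the off-diagonal Bessel--Kloosterman contribution as the recipe prescribes. Extending the resulting sums to infinity produces Euler products. In the first piece one factors out $\zeta(1+\alpha+\gamma)$, using the Hecke relations for $\lambda_f(p^r)$ together with $\mu_f(p^2)=\chi_0(p)$ and $\mu_f(p^r)=0$ for $r\ge 3$, leaving a residual Euler product $A_{\mathcal D}(\alpha,\gamma)$ analytic at the origin. In the sign-dependent piece the swap $\alpha\mapsto -\alpha$ induced by $X_L$ instead yields an explicit $\zeta(1-\alpha+\gamma)$ together with a residual Euler product which I expect, after careful bookkeeping, to match the factor $\chi(\epsilon+4\pi ix)$ of the statement; this corrects the misprinted factorization in \cite[\S 7]{ILS}, though their main term is unaffected.

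I next differentiate in $\alpha$ and set $\alpha=\gamma=r$, producing an asymptotic for the family-average of $(L'/L)(\foh+r,f)$. Translating to the $1$-level density uses the standard contour-integral identity
\begin{equation*}
D_{1,H_k^\pm(N);R}(\phi) \ = \ \frac{1}{|\mathcal{F}|}\sum_{f} \frac{1}{2\pi i}\oint \Bigl(\tfrac{\Lambda'}{\Lambda}(\foh+r,f) + \tfrac{\Lambda'}{\Lambda}(\foh-r,f)\Bigr) \phi\!\left(\tfrac{r\log R}{2\pi i}\right)\,dr,
\end{equation*}
with $\Lambda(s,f) = (\sqrt{N}/(2\pi))^s \Gamma(s+\tfrac{k-1}{2}) L(s,f)$. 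The $(\sqrt{N}/(2\pi))^s$ factor yields the $(\log N/\log R)\hphi(0)$ term, the $\Gamma$-factor yields the $\Gamma'/\Gamma$ integral, and the $L'/L$ piece yields the remaining two terms: shifting the contour of the first-piece contribution past the pole of $\zeta(1+2r)$ at $r=0$ (and using the change of variables $r=2\pi it/\log R$) gives the prime sum $\sum_p (2\log p/(p\log R))\hphi(2\log p/\log R)$, while the sign-dependent piece contributes the $\mp$ integral involving $X_L\cdot\chi$.

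The main obstacle is the precise identification of the factor $\chi$ as the residual Euler product in the sign-dependent piece, since the analogous computation in \cite{ILS} has a misprint and one must carefully expand the local factors in $\lambda_f(p^r)\mu_f(p^s)$ and reorganize the product. A secondary technical point is the $\epsilon\downarrow 0$ regularization: the contour in the sign-dependent piece passes near the singular locus where $\alpha+\gamma\to 0$, so the Dirichlet series and its Euler product must be displaced consistently with the sign of $\epsilon$. That the error term $O(N^{-1/2+\epsilon})$ survives differentiation is routine by Cauchy's integral formula, as recalled in Remark 2.2 of \cite{Mil5}.
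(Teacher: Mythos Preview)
Your overall strategy is exactly the paper's: apply the Ratios recipe to obtain Euler products for the two pieces $S_1$ (no $X_L$) and $S_2$ (with $X_L$), differentiate in $\alpha$ and set $\alpha=\gamma=r$, then convert to the $1$-level density via a contour integral, splitting off the $X_L'/X_L$ contribution to produce the $\log N$ and $\Gamma'/\Gamma$ terms. One inaccuracy: for this family the diagonal piece $S_1$ is simply $\prod_p\bigl(1-p^{-(1+\alpha+\gamma)}+p^{-(1+2\gamma)}\bigr)$, which is already analytic and equal to $1$ at the origin---there is no $\zeta(1+\alpha+\gamma)$ to factor out, and after differentiating one obtains $\sum_p (\log p)\,p^{-(1+2r)}$ directly. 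Correspondingly, the paper does not shift a contour past a pole to extract the prime sum; instead it truncates the prime sum at a large parameter $X$, interchanges the finite sum with the integral, shifts each term to $\Re s=\tfrac12$ (no pole), and then uses the compact support of $\hphi$ to send $X\to\infty$. Your identification of $\chi$ in the $S_2$ piece and the $\epsilon\downarrow 0$ regularization match the paper's Lemmas precisely.
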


In \S\ref{sec:THEORY}, we confirm the prediction of Theorem \ref{thm:ratios} for suitably restricted $\phi$, as specified in the following theorem.

\begin{thm} \label{thm:sqrt}
Assume GRH for $\zeta(s)$, Dirichlet $L$-functions $L(s,\chi)$, and $L(s,f)$. For even Schwartz functions $\phi$ such that $\supp(\hphi) \subset (-\sigma, \sigma) \subseteq (-2,2)$, and for $R$ a constant multiple of $N$, the 1-level density $D_{1, H_k^{\pm} (N); R}(\phi)$ agrees with the Ratios Conjecture's prediction up to $O(N^{-1/2+\epsilon} + N^{\sigma/2-1+\epsilon})$.
\end{thm}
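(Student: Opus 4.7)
The plan is to match the Ratios Conjecture's prediction against a direct number-theoretic computation of $D_{1,H_k^\pm(N);R}(\phi)$, keeping track of all lower order terms rather than only the leading term as in \cite{ILS}. First I would apply the explicit formula to each $L(s,f)$ to write the sum over nontrivial zeros as $\hphi(0)\log Q_f/\log R$, plus a $\Gamma$-factor contribution, minus
\begin{equation}
\frac{2}{\log R}\sum_p\sum_{\nu\ge 1}\frac{\lambda_f(p^\nu)\log p}{p^{\nu/2}}\hphi\!\left(\frac{\nu\log p}{\log R}\right),
\end{equation}
plus a negligible error coming from truncation under GRH. Using $\phi$ with support in $(-\sigma,\sigma)$ the $\nu\ge 3$ tail is $O(1/\log R)$ and the $\nu=2$ term handles itself by Rankin--Selberg plus Hecke relations, so the business end is the sum over primes with $\nu=1$, averaged over $f\in H_k^\pm(N)$ using the sign-separator $(1\pm\gep_f)/2$.

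Next I would average via the Petersson formula. Because $N$ is prime, $\gep_f = -\lambda_f(N)\sqrt{N}$ (Atkin--Lehner), so the sign-weighted average of $\lambda_f(p)$ reduces to averages of $\lambda_f(p)$ and $\lambda_f(pN)$ against the harmonic weights $\omega_f$. The Petersson formula converts each such average into a diagonal piece (which is zero unless $p=1$ or $p=N$, yielding the $\log N/\log R$ term and the first prime sum in Theorem \ref{thm:ratios}) plus a Bessel--Kloosterman sum
\begin{equation}
2\pi i^{-k}\sum_{c\equiv 0(N)}\frac{S(p,1;c)}{c}\, J_{k-1}\!\left(\frac{4\pi\sqrt{p}}{c}\right).
\end{equation}
Summing this over $p$ weighted by $\hphi(\log p/\log R)\log p/\sqrt{p}\log R$ is exactly the delicate sum analyzed in Sections 6--7 of \cite{ILS}. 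I would redo that analysis, using Poisson summation in $p$ to open the Kloosterman sum, the classical evaluation $\sum_{c}J_{k-1}(4\pi\sqrt{x}/c)/c$ via integral representations, and careful asymptotics of the resulting oscillatory integral. For $\supp(\hphi)\subset(-1,1)$ this contribution is $O(N^{-1/2+\gep})$; for $(-1,1)\subsetneq\supp(\hphi)\subset(-2,2)$ it produces a genuine main term that, after orthogonality of additive characters, assembles into precisely
\begin{equation}
\mp 2\lim_{\gep\downarrow 0}\int_{-\infty}^\infty X_L(\tfrac12+2\pi it)\,\chi(\gep+4\pi it)\,\phi(t\log R)\,\d t
\end{equation}
of Theorem \ref{thm:ratios}, with secondary term of size $O(N^{\sigma/2-1+\gep})$. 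The appearance of $\chi$ traces back to the Ramanujan sum $\sum_{(a,c)=1}e(ap/c)$ summed with the condition $c\equiv 0(N)$.

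Finally I would remove the harmonic weights $\omega_f$ using the standard procedure of Kowalski--Michel / \cite{ILS}: write $1/\omega_f$ in terms of $L(1,\mathrm{sym}^2 f)$, expand via Dirichlet series, and use GRH for $\mathrm{sym}^2 f$ to control the resulting sums; this reweighting costs only $O(N^{-1+\gep})$ and does not affect the claimed error. Appendix \ref{ap:SFE} confirms that replacing $(1\pm\gep_f)/2$ by its average (the original Ratios recipe) does not alter the answer, so the unweighted version inherits the agreement. The main obstacle is the second bullet: the Bessel--Kloosterman evaluation must be performed finely enough to extract not just the $\hphi(0)$ main term of \cite{ILS} but the full $\chi\cdot X_L$ lower order structure and to control the transition at $\sigma=1$, where the contribution switches from $O(N^{-1/2+\gep})$ to order $N^{\sigma/2-1+\gep}$. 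Everything else amounts to bookkeeping of the prime and $\Gamma$-factor terms to match Theorem \ref{thm:ratios} on the nose.
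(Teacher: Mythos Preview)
Your outline matches the paper's route almost exactly: explicit formula, split the prime-power sum by $\nu$, apply Petersson with the sign-separator $(1\pm\gep_f)$, and for $\nu=1$ feed the resulting Bessel--Kloosterman sums through the ILS machinery (their Lemmas~6.5--6.6, conditional on GRH for $\zeta$ and Dirichlet $L$-functions) to produce the $X_L\cdot\chi$ integral. Three points, however, need correction.

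First, the bound $O(1/\log R)$ for the $\nu\ge 3$ tail is too weak: the theorem asserts a power savings, so you must apply Petersson (Lemma~\ref{lem:PET}) there as well, obtaining $O(N^{-3/4+\sigma/4+\gep})$ as in Lemma~\ref{lem:V3}. Likewise, the $\nu=2$ piece is not ``Rankin--Selberg plus Hecke relations'' in this paper; one writes $\alpha_f^2(p)+\beta_f^2(p)=\lambda_f(p^2)-1$, the constant $-1$ supplies the prime sum matching $T_2$, and the $\lambda_f(p^2)$ and sign-weighted $\lambda_f(N)$ pieces are bounded directly via Petersson to $O(N^{-1/2+\gep}+N^{\sigma/2-1+\gep})$ (Lemma~\ref{lem:V2}). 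Note also that the explicit formula carries $\alpha_f^\nu(p)+\beta_f^\nu(p)$, not $\lambda_f(p^\nu)$; the discrepancy $\lambda_f(p^{\nu-2})$ is exactly what produces the $\nu=2$ diagonal term.

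Second, the unweighting step is spurious here: $D_{1,H_k^\pm(N);R}(\phi)$ is \emph{defined} with the harmonic weights $\omega_f^\pm(N)$, and the Ratios prediction in Theorem~\ref{thm:ratios} is for this same weighted density. No Kowalski--Michel argument is needed, and invoking it would drag in GRH for $\mathrm{sym}^2 f$, which is not among the hypotheses of Theorem~\ref{thm:sqrt}. (Minor: $\gep_f = i^k\mu(N)\sqrt{N}\,\lambda_f(N)$, so your sign formula drops an $i^k$.)
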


\begin{rek} Theorem \ref{thm:sqrt} shows that the $L$-functions Ratios Conjecture gives the correct prediction up to square root cancellation for $\supp(\hphi) \subseteq (-1,1)$, and up to a power savings for $\supp(\hphi) \subseteq (-2,2)$.
\end{rek}

Because the lower order terms in the 1-level density can be applied to several problems, we isolate these terms. The most important is the $1/\log R$ term, which is used to compute $N_{{\rm effective}}$ (see Footnote \ref{footnote:effective}). It is given by

\begin{thm}\label{thm:LOT} The $L$-functions Ratios Conjecture predicts that, for any fixed $\delta > 0$,
\bea D_{1, H_k^{\pm} (N); R}(\phi) & = & \frac{1}{\log R}\sum_{p} \frac{2\log p}{p} \hphi \left(\frac{2\log p}{\log R}\right) \mp \frac{1}{2}\phi(0) \nonumber \\
     & & \pm \int_{-\infty}^{\infty} \left(\frac{\sin \left(2\pi t \frac{\log \frac{N}{4 \pi^2}}{\log R}\right)}{2\pi t}\right)\phi(t) dt \nonumber \\
     & & \mp \frac{m}{\log R}\ \hphi\left( \frac{\log \frac{N}{4\pi^2}}{\log R}\right) \nonumber \\
     & & + \frac{\log N}{\log R}\hphi(0) + \frac{2}{\log R} \int_{-\infty}^{\infty}\frac{\Gamma'}{\Gamma}\left(\frac{k}{2}+\frac{2\pi it}{\log R}\right)\phi(t)dt \nonumber \\
     & & +\ O\left((\log R)^{-2(1-\delta)}\right), \eea
     where \be m = 2\gamma - 2\sum_p \frac{\log p}{p(p+1)} -4 \frac{\zeta'}{\zeta}(2)-2\frac{\Gamma'}{\Gamma}\left(\frac{k}{2}\right). \ee
In particular, let $\ell = \frac{\log \frac{N}{4 \pi^2}}{\log R}$ (note $\ell \sim 1$, as we take $R$ to be a constant multiple of $N$). Then, for $\phi$ satisfying $\supp(\hphi) \subseteq (-\ell,\ell)$, and for any $A > 0$,
\bea D_{1, H_k^{\pm} (N); R}(\phi) & = & \frac{1}{\log R}\sum_{p} \frac{2\log p}{p} \hphi \left(\frac{2\log p}{\log R}\right) + \frac{\log N}{\log R}\hphi(0) \nonumber \\
     & & + \frac{2}{\log R} \int_{-\infty}^{\infty}\frac{\Gamma'}{\Gamma}\left(\frac{k}{2}+\frac{2\pi it}{\log R}\right)\phi(t)dt + O\left(\frac{1}{\log^A R}\right).\ \ \  \eea
\end{thm}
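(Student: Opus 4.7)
The plan is to take the Ratios Conjecture prediction of Theorem \ref{thm:ratios} verbatim and asymptotically expand the $\lim_{\epsilon\downarrow 0}\int X_L\chi\phi$ term around the pole of $\chi(s)$ at $s=0$, converting it into the explicit lower order terms of the statement. After changing variables $u = t\log R$, the oscillating factor
$$X_L\left(\tfrac12 + \tfrac{2\pi iu}{\log R}\right) \;=\; \left(\tfrac{N}{4\pi^2}\right)^{-2\pi iu/\log R}\frac{\Gamma(\tfrac{k}{2} - \tfrac{2\pi iu}{\log R})}{\Gamma(\tfrac{k}{2} + \tfrac{2\pi iu}{\log R})}$$
becomes the Fourier kernel $e^{-2\pi iu\ell}$ (with $\ell = \log(N/4\pi^2)/\log R$) times a gamma ratio admitting the Taylor expansion $1 - (4\pi iu/\log R)\,\Gamma'/\Gamma(k/2) + O(1/\log^2 R)$. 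In parallel I would record the Laurent expansion $\chi(s) = 1/s + c_0 + O(s)$ at the origin, using the factorisation $\chi(s) = \zeta(s+1)\chi^\ast(s)$ with $\chi^\ast(s) = \prod_p (1 + 1/((p-1)p^s))(1 - 1/p^{s+1})$ holomorphic on $\Re s > -1$ and satisfying $\chi^\ast(0) = 1$; the log-derivative of the Euler product then gives $c_0 = \gamma + \sum_p \log p/(p(p-1))$.

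The main analytic step is the splitting $\chi(\epsilon + 4\pi iu/\log R) = (\epsilon + 4\pi iu/\log R)^{-1} + \chi_{\mathrm{reg}}(4\pi iu/\log R)$. For the pole part, I apply Sokhotsky--Plemelj,
$$\lim_{\epsilon\downarrow 0}\frac{1}{\epsilon + iv} \;=\; \pi\delta(v) - i\,\mathrm{PV}(1/v).$$
The delta contribution, using $X_L(\tfrac12) = 1$, gives $\mp\tfrac12\phi(0)$; the principal-value piece, after using evenness of $\phi$ to kill the cosine half of $e^{-2\pi iu\ell}$, collapses into $\pm\int \sin(2\pi u\ell)/(2\pi u)\,\phi(u)\,du$. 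For the regular part, combining the constant $c_0$ from $\chi_{\mathrm{reg}}$ with the $-\Gamma'/\Gamma(k/2)$ coming from the pole-of-$\chi$/linear-gamma cross term, Fourier inversion against $e^{-2\pi iu\ell}\phi(u)$ yields $\mp(m/\log R)\widehat\phi(\ell)$ with $m = 2c_0 - 2\Gamma'/\Gamma(k/2) = 2\gamma + 2\sum_p\log p/(p(p-1)) - 2\Gamma'/\Gamma(k/2)$. Invoking the identity
$$\sum_p\frac{\log p}{p(p-1)} \;=\; -\sum_p \frac{\log p}{p(p+1)} - 2\frac{\zeta'}{\zeta}(2),$$
which follows from $-\zeta'/\zeta(2) = \sum_p \log p/(p^2-1)$ combined with $1/(p(p-1)) + 1/(p(p+1)) = 2/(p^2-1)$, rewrites $m$ in the form of the statement. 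The quadratic-and-higher terms in the Taylor expansions of $\chi_{\mathrm{reg}}$ and of the gamma ratio, paired with the Schwartz decay of $\phi$, produce the error $O((\log R)^{-2(1-\delta)})$.

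For the second statement, the support condition $\mathrm{supp}(\widehat\phi)\subseteq(-\ell,\ell)$ immediately kills $\widehat\phi(\ell)$, and the sine integral simplifies via
$$\int_{-\infty}^{\infty}\frac{\sin(2\pi t\ell)}{2\pi t}\phi(t)\,dt \;=\; \tfrac12\int_{-\ell}^{\ell}\widehat\phi(u)\,du \;=\; \tfrac12\int_{-\infty}^{\infty}\widehat\phi(u)\,du \;=\; \tfrac12\phi(0),$$
so $\mp\tfrac12\phi(0)\pm\tfrac12\phi(0) = 0$. The super-polynomial savings $O(1/\log^A R)$ follow because $\widehat\phi$ and all its derivatives vanish in a neighbourhood of $\pm\ell$, killing every term in the Taylor-style error analysis; the tail $|u|\gtrsim \log^c R$ is absorbed by the Schwartz decay of $\phi$.

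The main obstacle is the clean accounting of the error terms, in particular verifying the stated $O((\log R)^{-2(1-\delta)})$ savings from the Taylor remainders against the oscillatory factor $e^{-2\pi iu\ell}$. The Sokhotsky--Plemelj application, the verification of $\chi^\ast(0) = 1$ and of $\chi^{\ast\prime}(0)$ via the Euler product, and the algebraic rewriting of $m$ into the stated form are all routine once the local expansions are in hand.
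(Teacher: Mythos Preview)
Your proposal is correct and follows essentially the same route as the paper: extract the simple pole of $\chi$ at $s=0$ (the paper does this via a semicircle indentation picking up half the residue, you via Sokhotsky--Plemelj, which is the distributional restatement of the same contour move), then read off the $\mp\tfrac12\phi(0)$, sine-integral, and $\widehat\phi(\ell)$ terms from the successive Laurent/Taylor coefficients, with the tail controlled by the $|u|\lessgtr(\log R)^{\delta}$ split and Schwartz decay. The only cosmetic difference is your factorisation $\chi(s)=\zeta(s+1)\chi^\ast(s)$ versus the paper's $\chi(u)=\frac{\zeta(2)}{\zeta(2+2u)}\zeta(1+u)\prod_p\bigl(1-\frac{p^u-1}{p(p^{1+u}+1)}\bigr)$; these give the constant $m$ in two different-looking forms, which is exactly why you need (and correctly supply) the identity $\sum_p\frac{\log p}{p(p-1)}=-\sum_p\frac{\log p}{p(p+1)}-2\frac{\zeta'}{\zeta}(2)$ to reconcile them.
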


We note that by Theorem \ref{thm:sqrt} (which assumes only GRH for $\zeta$, Dirichlet $L$-functions, and $L(s,f)$), the $L$-functions Ratios Conjecture's prediction from Theorem \ref{thm:LOT} can be proved to be accurate for any $\phi$ satisfying $\supp(\hphi) \subseteq(-2,2)$.

\begin{rek} While performing the analysis contained within this paper, the authors originally determined Theorem \ref{thm:LOT} as a prediction of the $L$-functions Ratios Conjecture. Using the $L$-functions Ratios Conjecture to determine the lower order terms was significantly less involved than showing agreement between the theory and the conjecture, as the Ratios argument avoided the difficult analysis of the Bessel-Kloosterman terms. This is an excellent example of the $L$-functions Ratios Conjecture being used to streamline the computation of quantities like the lower order terms in the 1-level density.
\end{rek}

\subsection{Discussion}\label{subsec:discussion}

In \cite{ILS}, the main term in the 1-level densities for $H_k^\pm(N)$ was computed for test functions $\phi$, where $\supp(\hphi) \subset (-2,2)$. We extend these results by computing all lower order terms down to square-root cancelation in the family's cardinality. We first use the Ratios Conjecture to predict the answer, and then generalize the analysis in \cite{ILS} to show agreement. A similar test of the $L$-functions Ratios Conjecture was performed by Miller \cite{Mil5} for the family $H_k^*(N)$, where there is no splitting by sign of the functional equation. We briefly comment on why our test, namely splitting the family by the sign of the functional equation, is of significant interest.

In the analysis performed in \cite{ILS}, we see that the terms arising from splitting the family by the sign of the functional equation contribute equally and oppositely for opposite signs of the functional equation. For $\phi$ so that $\hphi$ is supported outside $(-1,1)$ but within $(-2,2)$, it is shown that the non-diagonal Bessel-Kloosterman sums (which arise from applying the Petersson formula) contribute a main term to the 1-level density; these terms \emph{did not} contribute a main term when $\supp(\hphi) \subset (-1,1)$. In other words, for small support these non-diagonal terms were not significant, and only became a main term as the support increased.

Because of this, we were concerned about the results from the third step in the Ratios Conjecture. That step involves dropping the non-diagonal terms, and from the analysis in \cite{ILS} we know that, in fact, the non-diagonal terms contribute a main term. This makes for a terrific test of the Ratios Conjecture -- significantly better than the test in \cite{Mil5} (as the test there did not split by sign of the funtional equation; the non-diagonal terms' contributions cancel each other out). We ultimately find, however, that the Ratios Conjecture ``knows'' about these non-diagonal terms, and is able to determine both the main term and lower order terms that arise in splitting the family by the sign of the functional equation. This phenomenal agreement was somewhat surprising\footnote{It is only somewhat surprising as the Ratios Conjecture's predictions have been shown to hold in numerous cases, which convinced us to have faith.}.

Another reason that this test of the Ratios Conjecture is so important is that it is a great example of the predictive philosophy of the Ratios Conjecture. The analysis of the non-diagonal Bessel-Kloosterman sums in \cite{ILS} is very involved and technical\footnote{In fact, when Hughes and Miller \cite{HuMil} study the $n$-level density (or $n$\textsuperscript{th} centered moments) of cuspidal newforms, they encounter a multi-dimensional analogue of these sums. To avoid having to evaluate these directly, they convert their sums to a one-dimensional Bessel-Kloosterman sum by changing variables, which leads to a new test function. The resulting answer looks very different from the Random Matrix Theory predictions, though, because RMT was expecting an $n$-dimensional integral to be evaluated. The two answers are shown to agree through combinatorics, which, though involved, are more pleasant than generalizing the results from \cite{ILS}. A nice offshoot of this analysis is a new formula for the $n$-level density which, for restricted support, is more convenient for comparisons with RMT than the determinantal formulas of Katz and Sarnak. Formulas such as these are useful, as it is not always easy to see that number theory and RMT agree (see for example Gao's thesis \cite{Gao}).}, and a great deal of effort must be put into determining their contribution. In contrast, we completely ignore these bothersome terms in the Ratios Conjecture analysis, and \emph{still} come to the same conclusion. In fact, most of the analysis on the Ratios Conjecture side of the computation is relatively standard, e.g. dealing with contour integrals (perhaps with a pole on the line of integration, at worst).

Finally, in the $1$-level density expansions, the Ratios Conjecture predicts a term involving the integral of $\phi(t)$ against an Euler product. In all other families studied to date \cite{GJMMNPP,Mil3,Mil5}, either there is no product term (as in the unitary family of Dirichlet characters), or the product term is of size $O(|\mathcal{F}_N|^{-1/2+\gep})$ (as in the family of quadratic Dirichlet characters or all cuspidal newforms). This family is the first time that the product, which depends on the arithmetic of the family, not only contributes significant lower order terms but also a main term; this is the first test where the arithmetic of the family has played such a large role.

%
%

\section{The Ratios Conjecture}

\subsection{Preliminaries}

In this paper, we are interested in verifying the $L$-Functions Ratios Conjecture by comparing the conjecture's prediction for the weighted 1-level density for the families $H_k^{\pm}(N)$ of $L$-functions for cuspidal newforms of weight $k$ and level $N$, with sign of the functional equation $\pm 1$.

The specific quantity we are interested in is: \be D_{1, H_k^{\pm} (N); R}(\phi) :=  \sum_{f \in H_k^{\pm}(N)} \omega_f^{\pm}(N) \sum_{\substack{\gamma_f \\ L(1/2+i\gamma_f,f) = 0}} \phi\left(\gamma_f\frac{\log R}{2\pi}\right), \ee where $\phi$ is an even Schwartz function whose fourier transform has finite support, and so can be analytically continued to an entire function.

We describe the weights $\omega_f^{\pm}(N)$. As in \cite{Mil5}, we need to investigate sums such as \be \sum_{f \in H_k^\ast(N)} \gl_f(m) \gl_f(n). \ee To avoid technical difficulties\footnote{In \cite{ILS} much work was done to remove these weights; following them and \cite{Mil5}, we may consider the unweighted sums as well. The unweighted sums are important for investigating bounds for order of vanishing at the central point; see \cite{HuMil}.}, we introduce weights, and instead consider \be \sum_{f \in H_k^\ast(N)} \omega_f(N) \gl_f(m) \gl_f(n), \ee where the $\omega_f(N)$ are the harmonic (or Petersson) weights. These are defined by \be \omega_f^\ast(N) \ =\ \frac{\Gamma(k-1)}{(4\pi)^{k-1} (f,f)_N},  \ee where \be (f,f)_N \ = \ \int_{\Gamma_0(N)\setminus \mathbb{H}} f(z) \overline{f}(z) y^{k-2} dxdy. \ee These weights are almost constant in that we have the bounds (see \cite{HL,Iw}) \be N^{-1-\gep}\ \ll_k \ \omega_f^\ast(N) \ \ll_k \ N^{-1+\gep}; \ee if we allow ineffective constants we can replace $N^\gep$ with $\log N$ for $N$ large.

The weights $\omega_f^{\pm}(N)$ are just twice the modified Petersson weights $\omega_f^*(N)$. We multiply them by a factor of two due to the fact that roughly half of the family $H_k^*(N)$ has odd, and roughly half has even sign of the functional equation, and so multiplying by two gives a better normalization of the weights. These weights simplify the Petersson formula (see Appendix \ref{sec:PeterssonFormula} for statements).

\begin{rek} Technically we should use the modified weights $\omega_f(N) / \omega(N)$, where $\omega(N) = \sum_{f \in H_k^\ast(N)} \omega_f(N)$, as we do not include the level 1 forms. As $N\to\infty$ and there are $O(1)$ such forms, this leads to an error of size $O(N^{-1+\gep})$, which is much smaller than our other error terms. Thus we may safely use these weights. See \S1.2 of \cite{Mil5} for a complete explanation of the choice of weights. \end{rek}

\subsection{The Ratios Conjecture's Prediction}

\begin{thm} \label{thm:RCP}
For $\Re(\alpha), \Re(\gamma) > 0$, the Ratios Conjecture predicts that \bea \mathcal{R}_{\pm}(N) & \ := \ & \sum_{f \in H_k^{\pm}(N)} \omega_f^{\pm} (N) \frac{L(\foh + \alpha, f)}{L(\foh + \gamma, f)}\nonumber\\ & = & \prod_{p} \left(1 - \frac{1}{p^{1+\alpha+\gamma}} + \frac{1}{p^{1+2\gamma}}\right) \pm X_L \left(\foh + \alpha \right) \nonumber \\ & & \cdot \ \ \frac{1}{\zeta(1-\alpha+\gamma)} \prod_{p} \left(1 + \frac{p^{1-\alpha+\gamma}}{p^{1 + 2\gamma}(p^{1-\alpha+\gamma}-1)}\right)  + O(N^{-1/2+\epsilon}).\ \ \  \eea
\end{thm}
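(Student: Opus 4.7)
The plan is to implement the six-step Ratios Conjecture recipe from Section~1.2 directly to the ratio $L(\foh+\alpha,f)/L(\foh+\gamma,f)$ averaged over $H_k^\pm(N)$ with weights $\omega_f^\pm(N)=2\omega_f^\ast(N)$. First, the approximate functional equation expresses the numerator as $\sum_m \lambda_f(m)/m^{\foh+\alpha} + \epsilon_f X_L(\foh+\alpha)\sum_n \lambda_f(n)/n^{\foh-\alpha}$ modulo a remainder the recipe instructs us to discard, and the M\"obius expansion yields $1/L(\foh+\gamma,f) = \sum_h \mu_f(h)/h^{\foh+\gamma}$. Writing the indicator of $H_k^\pm(N)$ as $\foh(1\pm\epsilon_f)\mathbf{1}_{H_k^\ast(N)}$ converts the family sum into a sum over $H_k^\ast(N)$ with weight $(1\pm\epsilon_f)\omega_f^\ast(N)$, and since $(1\pm\epsilon_f)\epsilon_f = \epsilon_f \pm 1$ on $H_k^\ast(N)$, the result reorganizes into two ``sign-free'' pieces (the $1$ from $(1\pm\epsilon_f)$ paired with the first AFE sum, and the $\pm 1$ from $\epsilon_f\pm 1$ paired with $X_L(\foh+\alpha)$ times the second AFE sum) plus two ``genuinely signed'' pieces carrying a lone $\epsilon_f$.

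Next, I would apply the Petersson trace formula (Appendix~\ref{sec:PeterssonFormula}) and, per Step~3, retain only the diagonal Kronecker-$\delta$ contribution, discarding the non-diagonal Bessel--Kloosterman terms. The genuinely signed pieces are treated similarly: following the Remark after the recipe, they are kept in the weak form and shown in Appendix~\ref{ap:SFE} to contribute only $O(N^{-1/2+\epsilon})$. For each sign-free piece, the resulting double Dirichlet series in $(m,h)$ (resp.\ $(n,h)$) factors as an Euler product. At each prime $p\nmid N$, expanding the local factor using Hecke's relation $\lambda_f(p^a)\lambda_f(p^b)=\sum_j \chi_0(p)^j \lambda_f(p^{a+b-2j})$ together with $\mu_f(p)=-\lambda_f(p)$, $\mu_f(p^2)=\chi_0(p)$, $\mu_f(p^j)=0$ for $j\ge 3$, and identifying the pairs $(a,s)$ that produce a $\lambda_f(1)$ component, the only nonzero contributions are $(0,0)$, $(1,1)$, $(0,2)$, yielding local factors $1-p^{-(1+\alpha+\gamma)}+p^{-(1+2\gamma)}$ for the first piece and $1-p^{-(1-\alpha+\gamma)}+p^{-(1+2\gamma)}$ for the second; at the single ramified prime $p=N$ both local factors collapse to $1$, creating an $O(N^{-1})$ discrepancy that is absorbed into the error.

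To match the second Euler product to the form in the theorem I would extract the divergent factor $\zeta^{-1}(1-\alpha+\gamma)=\prod_p(1-p^{-(1-\alpha+\gamma)})$; the identity $(1-p^{-(1-\alpha+\gamma)})\bigl(1+p^{1-\alpha+\gamma}/[p^{1+2\gamma}(p^{1-\alpha+\gamma}-1)]\bigr)=1-p^{-(1-\alpha+\gamma)}+p^{-(1+2\gamma)}$ recasts the product in absolutely convergent form. The main obstacle is bookkeeping rather than conceptual: verifying that each discarded piece -- the AFE remainder, the off-diagonal Petersson mass, the genuinely signed sums, and the $p\mid N$ discrepancy -- stays within $O(N^{-1/2+\epsilon})$ in the half-plane $\Re(\alpha),\Re(\gamma)>0$. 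The striking fact that the off-diagonal Petersson mass actually produces a main-term contribution to the $1$-level density via Bessel--Kloosterman sums (after the contour integral of Step~6) is not a complication here; it is precisely the phenomenon that Theorem~\ref{thm:sqrt} is designed to confirm, namely that the Ratios recipe's prediction remains correct even though the dropped terms are \emph{not} truly negligible.
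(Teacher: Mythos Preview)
Your proposal is correct and follows essentially the same approach as the paper: expand the numerator via the approximate functional equation and the denominator via the generalized M\"obius series, split the $H_k^\pm(N)$ sum into a sum over $H_k^\ast(N)$ with the factor $(1\pm\epsilon_f)$, drop the genuinely signed pieces (as handled in Appendix~\ref{ap:SFE}), apply the Petersson formula keeping only the diagonal, and assemble the resulting Euler products. One small clarification: the paper is explicit (see the Remark following the proof) that the $O(N^{-1/2+\epsilon})$ error is \emph{conjectural} rather than verified---the recipe discards the AFE remainder and off-diagonal Petersson terms by fiat, and the error is posited, so the ``bookkeeping'' you mention is not actually carried out (except for the signed pieces, which are genuinely bounded in Appendix~\ref{ap:SFE}).
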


\begin{proof}
In order to compute the 1-level density, we follow the steps in the Ratios Conjecture to determine:

\bea\label{eq:rcpeqtwoeight}
\mathcal{R}_{\pm}(N) & \ := \ & \sum_{f \in H_k^{\pm}(N)} \omega_f^{\pm} (N) \frac{L(\foh + \alpha, f)}{L(\foh + \gamma, f)}  \nonumber\\
& = & \sum_{f \in H_k^*(N)} (1 \pm \epsilon_f) \omega_f^* (N)  \left(\sum_{h = 1}^\infty \frac{\mu_f (h)}{h^{\foh + \gamma}}\right) \Bigg[ \sum_{m\le x} \frac{\lambda_f (m)}{m^{\foh + \alpha}} \nonumber \\
&   & +  \epsilon_f X_L \left(\foh + \alpha \right) \sum_{n \le y} \frac{{\lambda}_f (n)}{n^{\foh - \alpha}} \Bigg].
\eea

We now split this into two sums through the factor $(1 \pm \epsilon_f)$. Note that we use $(1 \pm \epsilon_f)$ instead of $(1 \pm \epsilon_f)/2$ because $\omega_f^{\pm} (N) = 2\omega_f^{*} (N)$. We assume $\Re (\alpha), \Re (\gamma) > 0$ wherever necessary, as this is the only region we need to consider.

\bea & & \mathcal{R}_{\pm}(N)  \ := \  \nonumber\\
& & \sum_{f \in H_k^*(N)} \omega_f^* (N) \sum_{h = 1}^\infty \frac{\mu_f (h)}{h^{\foh + \gamma}} \left [ \sum_{m\le x} \frac{\lambda_f (m)}{m^{\foh + \alpha}} + \epsilon_f X_L \left(\foh + \alpha\right) \sum_{n \le y} \frac{{\lambda}_f (n)}{n^{\foh - \alpha}}\right ] \nonumber \\
& \pm & \sum_{f \in H_k^*(N)} \omega_f^* (N) \sum_{h = 1}^\infty \frac{\mu_f (h)}{h^{\foh + \gamma}} \left[ \epsilon_f \sum_{m\le x} \frac{\lambda_f (m)}{m^{\foh + \alpha}} +  X_L \left(\foh + \alpha\right) \sum_{n \le y} \frac{{\lambda}_f (n)}{n^{\foh - \alpha}} \right]. \ \
\eea

Following the recipe of the Ratios Conjecture, we ignore terms involving the sign of the functional equation, as the sum is over $H_k^* (N)$, and for $N$ prime and greater than 1, the average sign of the functional equation is 0. We note that by an argument similar to that in \cite{Mil5}, it can be shown that both terms involving the sign of the functional equation here are $O\left(\frac{1}{N}\right)$, so we need not assume this strong of a version of the Ratios Conjecture (see Appendix \ref{ap:SFE} for more details). Thus, we define
\bea
S_1 & := & \sum_{f \in H_k^*(N)} \omega_f^* (N)  \sum_{h = 1}^\infty \frac{\mu_f (h)}{h^{\foh + \gamma}}\sum_{m\le x} \frac{\lambda_f (m)}{m^{\foh + \alpha}} \nonumber\\
S_2 & := & \pm \sum_{f \in H_k^*(N)} \omega_f^* (N)  \sum_{h = 1}^\infty \frac{\mu_f (h)}{h^{\foh + \gamma}} X_L \left(\foh + \alpha\right) \sum_{n \le y} \frac{{\lambda}_f (n)}{n^{\foh - \alpha}},
\eea
and so we are left to consider $S_1 + S_2$. Following the steps in \cite{Mil5}, we get
\bea S_1 & = & \prod_{p} \left(1 - \frac{1}{p^{1+\alpha+\gamma}} + \frac{1}{p^{1+2\gamma}}\right) \nonumber\\
				 &   & +\ O(N^{-1/2+\epsilon}) \nonumber\\
     S_2 & = & \pm X_L \left(\foh + \alpha \right) \frac{1}{\zeta(1-\alpha+\gamma)} \prod_{p} \left(1 + \frac{p^{1-\alpha+\gamma}}{p^{1 + 2\gamma}(p^{1-\alpha+\gamma}-1)}\right) \nonumber\\
         &   & +\ O(N^{-1/2+\epsilon}). \eea The computation for $S_1$ was done in \S2.2 of \cite{Mil5}; the computation of $S_2$ follows analogously.
\end{proof}

\begin{rek} The error terms arising above are added somewhat ad-hoc. They are only there because that is the level to which the $L$-functions Ratios Conjecture is expected to be accurate. \end{rek}

We now differentiate with respect to $\alpha$ to determine $\sum_{f\in H_k^{\pm}} \omega_f^* (N) \frac{L'(\foh + \alpha, f)}{L(\foh + \gamma, f)}$; note the differentiation does not increase the size of the error term (see Remark 2.2 of \cite{Mil5}). After determining this sum, we set $\alpha = \gamma = r$ to prepare for the contour integration to compute the predicted weighted 1-level density.

\begin{lem} \label{lem:S_1}
For $\Re(r) > 0$, the Ratios Conjecture predicts that \be \sum_{f \in H_k^{\pm}(N)} \omega_f^{\pm} (N) \frac{L'(\foh + r, f)}{L(\foh + r, f)}\ =\ \sum_{p} \left(\frac{\log p}{p^{1+2r}}\right) \mp X_L \left(\foh + r\right) \chi(2r) + O(N^{-1/2+\epsilon}), \ee where $\chi(s)$ is defined as \be \chi(s)\ :=\ \prod_{p} \left(1 + \frac{1}{(p-1)p^{s}}\right). \ee
\end{lem}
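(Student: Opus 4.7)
The plan is to differentiate the Ratios Conjecture's prediction from Theorem~\ref{thm:RCP} in the variable $\alpha$ and then specialize to $\alpha=\gamma=r$. Since
\[
\frac{\partial}{\partial\alpha}\frac{L(\foh+\alpha,f)}{L(\foh+\gamma,f)}\bigg|_{\alpha=\gamma=r} \ =\ \frac{L'(\foh+r,f)}{L(\foh+r,f)},
\]
this immediately produces the logarithmic-derivative sum on the left. The $O(N^{-1/2+\epsilon})$ error is preserved under the derivative via Cauchy's integral formula, exactly as in Remark~2.2 of \cite{Mil5}, so the entire task reduces to computing the $\alpha$-derivative of the two main terms in Theorem~\ref{thm:RCP} and then specializing.

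For the diagonal Euler product $A(\alpha,\gamma) := \prod_p\bigl(1 - p^{-(1+\alpha+\gamma)} + p^{-(1+2\gamma)}\bigr)$, I first note that at $\alpha=\gamma=r$ each local factor equals $1$, so $A(r,r)=1$. A logarithmic differentiation in $\alpha$ followed by specialization collapses the sum over primes to $\sum_p (\log p)\,p^{-(1+2r)}$, matching the first term on the right of the lemma.

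The main subtlety is the off-diagonal piece $\pm X_L(\foh+\alpha)\cdot\zeta(1-\alpha+\gamma)^{-1}\cdot C(\alpha,\gamma)$, where $C(\alpha,\gamma)$ denotes the second Euler product in Theorem~\ref{thm:RCP}. Because $\zeta$ has a simple pole at $s=1$, the factor $1/\zeta(1-\alpha+\gamma)$ has a simple zero at $\alpha=\gamma$: the Laurent expansion $\zeta(s)=(s-1)^{-1}+\gamma_0+O(s-1)$ gives $1/\zeta(1-\alpha+\gamma) = (\gamma-\alpha) + O((\gamma-\alpha)^2)$. Consequently, after differentiating in $\alpha$ and setting $\alpha=\gamma=r$, only the term where $\partial_\alpha$ hits this vanishing factor survives, contributing a factor of $-1$; the remaining factors $X_L(\foh+\alpha)$ and $C(\alpha,\gamma)$ are simply evaluated at $\alpha=\gamma=r$. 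Thus the surviving contribution is $\mp X_L(\foh+r)\cdot C(r,r)$.

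To finish, I identify $C(r,r)$ with $\chi(2r)$: each local factor of $C$ at $\alpha=\gamma=r$ simplifies to
\[
1 + \frac{p}{p^{1+2r}(p-1)} \ =\ 1 + \frac{1}{(p-1)\,p^{2r}},
\]
which is precisely the local factor of $\chi(2r)$; absolute convergence for $\Re(r)>0$ is immediate. Summing the diagonal and off-diagonal contributions yields the stated formula. The only step requiring any care is the order-of-expansion bookkeeping around the $\zeta$-pole, which ensures that the off-diagonal term picks up neither $\zeta'/\zeta$-type corrections nor an $\alpha$-derivative of $X_L$ or of $C$; once that is handled the argument is essentially immediate.
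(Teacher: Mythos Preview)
Your proposal is correct and follows essentially the same route as the paper: logarithmic differentiation of the diagonal Euler product to obtain $\sum_p (\log p)\,p^{-(1+2r)}$, the simple-zero trick for $1/\zeta(1-\alpha+\gamma)$ (which the paper quotes from page~7 of \cite{CS1}) to reduce the off-diagonal piece to $\mp X_L(\tfrac12+r)\,C(r,r)$, and the identification $C(r,r)=\chi(2r)$. The error term is handled identically via the Cauchy-integral remark in \cite{Mil5}.
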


\begin{proof} First, we take advantage of the following expression for $\frac{d}{d\alpha}S_1(\alpha,\gamma)$:
\be \frac{dS_1(\alpha,\gamma)}{d\alpha}\Big |_{\alpha = \gamma = r}\ =\ S_1(\alpha,\gamma) \frac{d}{d\alpha}\log(S_1(\alpha,\gamma)) \Big |_{\alpha = \gamma = r}. \label{ln:ds} \ee
We now compute $\frac{d}{d\alpha}\log(S_1(\alpha,\gamma))$:
\bea
\frac{d}{d\alpha}\log(S_1(\alpha,\gamma)) &\ = \ &  \sum_{p} \frac{d}{d\alpha}\log(1 - \frac{1}{p^{1+\alpha + \gamma}} + \frac{1}{p^{1+2\gamma}}) \nonumber \\
					 &=& \sum_{p} \frac{-\frac{1}{p^{1+\alpha +\gamma}}(-\log p)}{1 - \frac{1}{p^{1+\alpha+\gamma}} + \frac{1}{p^{1+2\gamma}}} \nonumber \\
					 &=& \sum_{p} \frac{(\frac{\log p}{p^{1+\alpha +\gamma}})} {1 - \frac{1}{p^{1+\alpha +\gamma}} + \frac{1}{p^{1+2\gamma}}}.
\eea
With this, by equation \eqref{ln:ds} we have \be \frac{dS_1(\alpha,\gamma)}{d\alpha} \Big |_{\alpha = \gamma = r}\ =\ \prod_{p} 1 \sum_{p} \frac{\frac{\log p}{p^{1+2r}}}{1}\ =\ \sum_{p} \frac{\log p}{p^{1+2r}}. \ee
Next, \bea S_2 &\ =\ & \pm X_L \left(\foh + \alpha \right) \frac{1}{\zeta(1-\alpha+\gamma)} \prod_{p} \left(1 + \frac{p^{1-\alpha+\gamma}}{p^{1 + 2\gamma}(p^{1-\alpha+\gamma}-1)}\right) \nonumber \\
               & = & \frac{S_2^*(\alpha,\gamma)}{\zeta(1-\alpha+\gamma)}. \eea
We now use the following observation (see page 7 of \cite{CS1}). For a function $f(z,w)$ which is analytic at $(z,w) = (\alpha,\alpha)$, we have that \be \frac{d}{d\alpha}\frac{f(\alpha,\gamma)}{\zeta(1-\alpha+\gamma)} \Big|_{\gamma = \alpha}\ =\ -f(\alpha, \alpha). \ee Thus, we have that
\be \frac{dS_2}{d\alpha} \Big |_{\alpha = \gamma = r} = -S^*_2(r,r)\ =\ \mp X_L \left(\foh + r\right) \prod_{p} \left(1 + \frac{p}{(p-1)p^{1+2r}}\right). \ee
Summing the expression for the derivative of $S_1$ with that of $S_2$ gives the lemma.
\end{proof}


\subsection{Weighted 1-level density from the Ratios Conjecture}

We now evaluate a contour integral to determine $D_{1, H_k^{\pm} (N); R}(\phi)$.  We first calculate the unscaled 1-level density, written as $S_{1, H_k^{\pm} (N)} (g)$, where $g$ is related to $\phi$ by $g(t) = \phi\left(\frac{t\log R}{2\pi}\right)$. With this choice of $g$, a change of variables shows $D_{1, H_k^{\pm} (N); R}(\phi) = S_{1, H_k^{\pm} (N)} (g)$. Note that $S_{1, H_k^{\pm} (N)} (g)$ should not be confused with $S_1$ above (to which we will no longer refer). Let $c \in \left(\foh, \frac{3}{4}\right)$.
\bea
& & S_{1, H_k^{\pm} (N)} (g)\ :=\ \sum_{f\in H_k^{\pm} (N)} \omega_f^*(N) \sum_{\gamma_f} g(\gamma_f)  \nonumber \\
& =\ & \frac{1}{2\pi i}  \left(\int_{(c)} - \int_{(1-c)}\right) \sum_{f\in H_k^{\pm} (N)} \omega_f^* (N) \frac{L'}{L}(s,f) g\left(-i\left(s-\foh \right)\right) ds.
\eea
Because of its ultimate similarity to the integral over $\Re(s) = c$, we begin by considering the integral over $\Re(s) = 1-c$.

For ease of writing integrals, we introduce the following notation: let $G_+(s) = g\left(-i\left(s - \foh \right)\right)$, let $G_-(s) = g\left(-i\left(\foh - s\right)\right)$, and let $G(s) = G_+(s) + G_-(s)$. Note that $G_+(s) = G_-(1-s)$. Thus, we have
\bea \int_{(1-c)} &\ :=\ & \frac{1}{2\pi i} \int_{(1-c)} \left(\sum_{f\in H_k^{\pm} (N)} \omega_f^{\pm} (N) \frac{L'}{L}(s,f)\right)G_+(s)ds. \nonumber \\
& = & \frac{1}{2\pi i} \int_{-\infty}^{\infty}\Big[ \sum_{f\in H_k^{\pm} (N)} \omega_f^{\pm} (N) \frac{L'}{L}(1-c+it,f) \nonumber \\ & & \hspace{0.75in}G_+(1-c+it)\Big]i dt  \nonumber \\
& = & \frac{-1}{2\pi} \int_{\infty}^{-\infty} \Big[\sum_{f\in H_k^{\pm} (N)} \omega_f^{\pm} (N) \frac{L'}{L} (1-(c+it),f) \nonumber \\ & & \hspace{0.75in}G_+(1-(c+it)) \Big]dt.
\eea
By the functional equation $L(s,f) = \epsilon_f X_L(s) L(1-s, f)$, we have $\frac{L'}{L}(1-s,f) = \frac{X'_L}{X_L}(s) - \frac{L'}{L}(s,f)$. This gives us:
\bea
\int_{(1-c)} & = & -\frac{1}{2\pi} \int_{\infty}^{-\infty} \sum_{f\in H_k^{\pm} (N)} \omega_f^{\pm} (N) \Bigg[\frac{X'_L}{X_L}(c+it) \nonumber \\
& & \hspace{0.75in} - \frac{L'}{L}(c+it,f)\Bigg] G_-(c+it)dt  \nonumber \\
& = & \frac{1}{2\pi} \int_{-\infty}^{\infty} \left(\frac{X'_L}{X_L}(c+it)\right)G_-(c+it)dt  \nonumber \\
&   & - \frac{1}{2\pi} \int_{-\infty}^{\infty} \sum_{f\in H_k^{\pm} (N)} \omega_f^{\pm} (N) \frac{L'}{L}(c+it,f)G_-(c+it)dt. \label{ln:ints}
\eea
Let the first integral in equation \eqref{ln:ints} be denoted $\int_{X_L}$, and let the second be denoted $\int_{(c)}^*$. Then we have \be \int_{(c)}^*\ =\ \frac{1}{2\pi i} \int_{(c)} \sum_{f\in H_k^{\pm} (N)} \omega_f^{\pm}(N) \frac{L'}{L}(s,f)G_-(s)dt. \ee
Now, note that
\be D_{1, H_k^{\pm} (N); R}(\phi)\ =\ S_{1, H_k^{\pm} (N)} (g) = \int_{(c)} + \int_{(c)}^* - \int_{X_L}. \label{ln:density}\ee
By a simple contour shift and change of variables, we see that
\bea \int_{X_L} & := & \frac{1}{2\pi} \int_{-\infty}^{\infty} \frac{X'_L}{X_L}(c+it)G_-(c+it)dt \nonumber\\
                & =  & \int_{-\infty}^{\infty} \frac{X'_L}{X_L}\left(\foh+2\pi it\right)\phi(t\log R)dt. \eea
We continue to simplify this integral through the definition of $X_L$ (equation \eqref{ln:XL}), which gives the following formula
\bea & & -\int_{-\infty}^\infty \frac{X'_L}{X_L}\left(\foh + 2\pi it\right)\phi(t\log R)dt \nonumber\\ & & \ \ \ \ =\ \frac{\log N}{\log R}\hphi(0) + \frac{2}{\log R}\int_{\infty}^\infty \frac{\Gamma'}{\Gamma}\left(\frac{k}{2}+\frac{2\pi it}{\log R}\right)\phi(t)dt. \label{ln:XLint} \eea
We now evaluate $\int_{(c)} + \int_{(c)}^*$. To begin, we state a lemma from \cite{Mil5} that we use to improve the convergence of the product in the expression from Lemma \ref{lem:S_1}. We note that finding  factorizations such as the one from the following lemma is an important part of applying the $L$-functions Ratios Conjecture.

\begin{lem} \label{lem:MilProd}
Let $\Re(u) \ge 0$. Then \be \chi(u)\ :=\ \prod_p \left(1+\frac{1}{(p-1)p^u}\right)\ =\ \frac{\zeta(2)}{\zeta(2+2u)} \ \zeta(1+u)\prod_p\left(1-\frac{p^u-1}{p(p^{1+u}+1)}\right). \label{ln:chi}\ee
\end{lem}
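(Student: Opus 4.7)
The plan is to prove the identity factor by factor as an equality of Euler products. For $\Re(u) > \tfrac{1}{2}$ (where every product in sight converges absolutely), I would first rewrite the right-hand side as a single Euler product using $\zeta(1+u) = \prod_p (1-p^{-1-u})^{-1}$ and $\zeta(2)/\zeta(2+2u) = \prod_p (1-p^{-2-2u})/(1-p^{-2})$. The claim then reduces to checking, for every prime $p$, the purely algebraic identity
\begin{equation*}
\frac{1-p^{-2-2u}}{(1-p^{-2})(1-p^{-1-u})}\left(1-\frac{p^u-1}{p(p^{1+u}+1)}\right) \ = \ 1 + \frac{1}{(p-1)p^u}.
\end{equation*}

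Next I would carry out this simplification. The factorization $1-p^{-2-2u} = (1-p^{-1-u})(1+p^{-1-u})$ cancels the $(1-p^{-1-u})$ in the denominator. After writing $1+p^{-1-u} = (p^{1+u}+1)/p^{1+u}$ and combining the parenthetical factor over the common denominator $p(p^{1+u}+1)$, the $(p^{1+u}+1)$ terms cancel and the left-hand side simplifies to
\begin{equation*}
\frac{p^{2+u} + p - p^u + 1}{p^u(p^2-1)}.
\end{equation*}
The key observation is $p^{2+u} - p^u = p^u(p-1)(p+1)$, which rewrites the numerator as $p^u(p^2-1) + (p+1)$. Dividing yields $1 + \frac{p+1}{p^u(p-1)(p+1)} = 1 + \frac{1}{(p-1)p^u}$, as required.

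There is no real obstacle here: the content is a finite manipulation done once per prime. What makes the lemma useful, and worth isolating, is the analytic payoff. Each factor of the new product $\prod_p \bigl(1 - (p^u-1)/(p(p^{1+u}+1))\bigr)$ is $1 + O(p^{-2})$ uniformly on $\Re(u) \ge 0$, so this product converges absolutely on a strictly larger half-plane than the original $\chi(u)$. The identity therefore extends from $\Re(u) > \tfrac{1}{2}$ to all $\Re(u) \ge 0$ as an equality of meromorphic functions, isolating the simple pole of $\chi(u)$ at $u=0$ inside the $\zeta(1+u)$ factor. This improved convergence is precisely what is needed to shift the contour past $\Re(s) = \tfrac{1}{2}$ in the evaluation of $\int_{(c)} + \int_{(c)}^\ast$ that immediately follows.
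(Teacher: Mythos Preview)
Your proof is correct. The paper does not actually prove this lemma; it merely quotes it from \cite{Mil5} and uses it, so there is no proof in the paper to compare against. Your approach---verify the local Euler factor identity algebraically for each prime, then invoke analytic continuation via the improved $1+O(p^{-2})$ convergence of the new product---is the natural one, and your algebra checks out line by line. One very small remark: the region of absolute convergence where you first establish the identity can be taken to be $\Re(u)>0$ rather than $\Re(u)>\tfrac12$, since both $\chi(u)$ and the Euler product for $\zeta(1+u)$ already converge absolutely there; this does not affect the argument.
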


Here we note that the product on the right hand side of the expression in the lemma converges rapidly, as each term is equal to $1 + O(1/p^2)$. We only use this lemma to note that the product on the left hand side of the expression in the lemma converges for $\Re(u) = 0$ as long as $\Im(u) \neq 0 $.

Applying this new expression for the product to the estimate from Lemma \ref{lem:S_1}, we perform the following deductions:
\bea
\int_{(c)} + \int_{(c)}^* &\ =\ & \frac{1}{2\pi i} \int_{(c)} \Big[ \sum_p \frac{\log p}{p^{2s}} \nonumber \\
& & \mp X_L(s) \frac{\zeta(2)}{\zeta(4s)}\zeta(2s) \prod_p \left(1 - \frac{p^{2s-1}-1}{p(p^{2s}+1)}\right)\Big] G(s) ds \label{ln:Mlemma} \nonumber \\
& = & \frac{1}{2\pi i} \int_{(c)} \sum_p \frac{\log p}{p^{2s}} G(s) ds \nonumber \\
& & \mp \frac{1}{2\pi i} \int_{(c)} X_L(s) \frac{\zeta(2)}{\zeta(4s)}\zeta(2s) \prod_p \left(1 - \frac{p^{2s-1}-1}{p(p^{2s}+1)}\right) G(s) ds. \ \ \ \ \  \label{ln:intc}
\eea

We thus have the following two integrals to consider:
\bea T_1 &\ :=\ & \int_{(c)} X_L(s) \frac{\zeta(2)}{\zeta(4s)}\zeta(2s) \prod_p \left(1 - \frac{p^{2s-1}-1}{p(p^{2s}+1)}\right) G(s) ds \nonumber\\
     T_2 & := & \int_{(c)} \sum_p \frac{\log p}{p^{2s}} G(s) ds. \eea
We note that the choice of subscript for $T_1, T_2$ has been made for agreement with corresponding terms in the theoretical evaluation in Section \ref{sec:THEORY}. We first determine the contribution of $T_2$. Some care is needed in its analysis, as we cannot use the Fubini-Tonelli theorem to interchange the integration and summation due to the divergence of the absolute value of the integrand.

\begin{lem} \label{lem:T2C} For $g$ satisfying $g(t) = \phi\left(\frac{t\log R}{2\pi}\right)$, we have the following expression for $T_2$:
\be \frac{1}{2 \pi i} T_2\ =\ \frac{1}{2 \pi}\sum_{p} \frac{2\log p}{p} \widehat{g}\left(\frac{2\log p}{2\pi}\right) \ = \ \frac{1}{\log R}\sum_{p} \frac{2\log p}{p} \hphi \left(\frac{2\log p}{\log R}\right). \ee
\end{lem}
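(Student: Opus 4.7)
My plan is to interchange sum and integral \emph{before} shifting contours, then move each integral to the critical line and recognize it as a Fourier transform of $g$. The warning in the statement is that on the natural target line $\Re s = \foh$ the series $\sum_p \log p\, p^{-2s}$ does not converge absolutely, so Fubini cannot be applied there. I would avoid this by first working on the contour $\Re s = c$ for $c \in (\foh, 3/4)$, where $\sum_p \log p\, p^{-2c} < \infty$ and $\int_{(c)} |G(s)|\, |ds| < \infty$: since $\hg$ has compact support, Paley--Wiener shows $G_\pm(c+it) = g(t \mp i(c-\foh))$ is Schwartz in $t$. Fubini then yields
$$T_2 \;=\; \sum_p \log p \int_{(c)} \frac{G(s)}{p^{2s}}\, ds.$$

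For each fixed $p$ the integrand $G(s)/p^{2s}$ is entire in $s$, and the Paley--Wiener bounds give uniform rapid decay of $G_\pm$ on vertical strips, so I would shift each contour from $\Re s = c$ to $\Re s = \foh$ without crossing any singularity. Parameterizing $s = \foh + it$, the evenness of $g$ gives $G(\foh + it) = g(t) + g(-t) = 2 g(t)$, whence
$$\int_{(\foh)} \frac{G(s)}{p^{2s}}\, ds \;=\; \frac{i}{p}\int_{-\infty}^\infty 2 g(t) e^{-2it\log p}\, dt \;=\; \frac{2i}{p}\, \hg\!\left(\frac{\log p}{\pi}\right).$$
Dividing by $2\pi i$ produces the first claimed form $\tfrac{1}{2\pi}\sum_p \tfrac{2\log p}{p}\hg(2\log p/(2\pi))$. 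For the second equality I would substitute $g(t) = \phi(t\log R/(2\pi))$ into the Fourier transform; a change of variables gives $\hg(u) = (2\pi/\log R)\hphi(2\pi u/\log R)$, and taking $u = 2\log p/(2\pi)$ converts the expression into $\tfrac{1}{\log R}\sum_p \tfrac{2\log p}{p}\hphi(2\log p/\log R)$.

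The main obstacle is exactly the subtlety flagged in the statement: one must perform the Fubini exchange on $\Re s = c$, where absolute convergence of the Dirichlet-type series is available, and only \emph{afterwards} shift each single-$p$ integral to the critical line. Aside from getting this ordering right, the remainder is elementary complex analysis plus a direct Fourier computation.
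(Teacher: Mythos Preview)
Your argument is correct and follows the same overall strategy as the paper: work on $\Re s = c > \tfrac12$, interchange summation and integration there, then shift each single-prime integral to $\Re s = \tfrac12$ and identify it as a Fourier transform.

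The only genuine difference is in how the interchange is justified. The paper asserts that Fubini--Tonelli cannot be applied directly (a statement that really refers to the line $\Re s = \tfrac12$, not to $\Re s = c$), and instead truncates the prime sum at a large parameter $X$: for the finite piece $\sum_{p\le X}$ the interchange is trivial and the contour shift is done as you do; the tail $\sum_{p>X}$ is bounded by $X^{-2\delta+\epsilon}$ times an integral of $|G|$, and then $X\to\infty$. You instead observe that on $\Re s = c$ the double sum/integral is already absolutely convergent, since $\sum_p (\log p)\, p^{-2c} < \infty$ and Paley--Wiener makes $\int_{(c)} |G(s)|\, |ds| < \infty$, so Fubini applies immediately. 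Your route is shorter and avoids the auxiliary truncation; the paper's route is a hands-on version of the same absolute-convergence fact. After the interchange, the two proofs are identical.
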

\begin{proof}
We want to compute \be T_2\ =\ \int_{(c)} \left( \sum_p \frac{\log p}{p^{2s}}\right) G(s) ds \ee with $c > 1/2$. Let us write $c= \foh + \delta$, so $\delta > 0$ (and $s = c + it$). While the prime sum has a pole when $s=1/2$ (it is essentially $\zeta'(2s)/\zeta(2s)$, differing from this by a bounded factor from the sum over prime powers), this series converges absolutely when $\delta > 0$. In fact, let $X$ be an arbitrary parameter to be determined later. Then
\bea \left| \sum_{p > X} \frac{\log p}{p^{2s}} \right| & \ \le \ & \sum_{p > X} \frac{\log p}{p^{1+2\delta}} \nonumber\\
     & \ll & \sum_{p > X} \frac1{p^{1+2\delta - \epsilon}} \ \ \ ({\rm as}\ \log p \ll p^\epsilon) \nonumber\\
     & \le & \sum_{n > X} n^{-(1+2\delta - \epsilon)} \nonumber\\ & \ll & \int_X^\infty u^{-(1+2\delta-\epsilon)} du \nonumber\\
     & \ll & X^{-2\delta + \epsilon}. \eea
We thus write \be \sum_p \frac{\log p}{p^{2s}} \ = \ \sum_{p \le X} \frac{\log p}{p^{2s}} + \sum_{p > X} \frac{\log p}{p^{2s}}. \ee
We now evaluate the following two integrals:
\bea I_1 & \ := \ &  \int_{(c)} G(s) \sum_{p \le X} \frac{\log p}{p^{2s}} \nonumber\\
     I_2 & := & \int_{(c)} G(s)  \sum_{p > X} \frac{\log p}{p^{2s}}. \eea
We will change variables to replace $g$ by $\phi$, where $g(t) = \phi\left(\frac{t \log R}{2\pi}\right)$. A straightforward computation shows that $\widehat{g}(\xi) = \frac{2\pi}{\log R}
\hphi(2\pi \xi / \log R)$.

We show $I_2$ can be made arbitrarily small by choosing $X$ sufficiently large. As $c = \foh + \delta$, \be G(s) \ = \ g(t - i\delta) + g(-t + i\delta) \ = \ \phi\left( \frac{(t-i\delta)\log R}{2\pi}\right) + \phi\left( \frac{(-t+i\delta)\log R}{2\pi}\right), \ee where $R = k^2 N$ is the analytic conductor for our cuspidal newform (we will take $k$ fixed and $N\to\infty$ through the primes). Using the bound from Lemma \ref{lem:decayphi}, we find for any $n$ that
\bea \phi\left( \frac{(t-i\delta)\log R}{2\pi}\right) & \ \ll_{n,\phi} \ & \exp\left(2\pi \sigma \frac{\delta \log R}{2\pi}\right) (t^2 + (\delta/\log R)^2)^{-n} \nonumber\\
     & \ll & R^{\delta \sigma} / (t^2 + (\delta / \log R)^2)^n, \eea
where $\supp (\hphi) \subseteq (-\sigma, \sigma)$. This implies that $I_2$ can be made arbitrarily small by choosing $X$ sufficiently large:
\bea I_2 & \ \ll \ & \int_{(c)} \frac{R^{\delta \sigma}}{(t^2 + (\delta / \log R)^2)^n} \cdot X^{-2\delta + \epsilon} ds. \eea
As $ds = idt$, we see the $t$-integral converges, and is at most a power of $\log R$. We are left with the factor $R^{\delta \sigma} / X^{2\delta - \epsilon}$; if we choose $X$ large, such as $X = R^{(2011 \delta \sigma + 2011) / (2\delta-\epsilon)}$, then this piece is bounded by $R^{-1/2}$ and thus negligible; in fact, this piece tends to zero as $X\to\infty$.

We are thus left with analyzing $I_1$. Fortunately now we have a \emph{finite} prime sum. It is thus trivial to interchange the integration and summation (especially as $g$ is bounded). We now have \be I_1 \  = \ \sum_{p \le X} \log p \int_{(c)} G(s) p^{-2s}. \ee For each integral, everything is well-behaved, there are no poles, and thus we may shift the contour to $c=1/2$. This gives \be I_1 \ = \ \sum_{p \le X} \frac{\log p}{p} \int_{-\infty}^\infty (g(t) + g(-t)) p^{-2it} i dt \ = \ 2 \sum_{p \le X} \frac{\log p}{p} \int_{-\infty}^\infty g(t) p^{-2it} i dt. \ee The integral is now handled as in \cite{Mil5} (we have dropped the $1/2\pi i$ that should be outside these contour integrals; that will cancel with the $i$ here): \be \int_{-\infty}^\infty g(t)
p^{-2it}dt \ = \ \int_{-\infty}^\infty g(t) e^{-2\pi i (\frac{2\log
p}{2\pi}) t} dt \ = \ \widehat{g}\left(\frac{2\log p}{2\pi}\right)
\ee Therefore \be I_1 \ = \  2i\sum_{p \le X} \frac{\log p}{p} \widehat{g}\left(\frac{2\log p}{2\pi}\right). \ee If $X$ is sufficiently large, $\widehat{g}\left(\frac{2\log p}{2\pi}\right) = 0$ as $\widehat{g}$ has compact support. Thus if $X$ is large we may extend this sum to infinity with no error, or, equivalently, sending $X\to\infty$ means $I_2$ does not contribute and thus our original integral is just $I_1$.

Now, since $\widehat{g}(\xi) = \frac{2\pi}{\log R}\hphi(2\pi \xi / \log R)$, we have that $\frac{1}{2 \pi}\widehat{g}\left(\frac{2\log p}{2\pi}\right) = \frac{1}{\log R}\hphi\left(2\frac{\log p}{\log R}\right)$. So, we have just shown that \be \frac{1}{2 \pi i} T_2 = \frac{1}{2 \pi}\sum_{p} \frac{2\log p}{p} \widehat{g}\left(\frac{2\log p}{2\pi}\right) = \frac{1}{\log R}\sum_{p} \frac{2\log p}{p} \hphi \left(\frac{2\log p}{\log R}\right), \ee
as desired.
\end{proof}

We now consider the integral \be T_1\ :=\ \int_{(c)} X_L(s) \frac{\zeta(2)}{\zeta(4s)}\zeta(2s) \prod_p \left(1 - \frac{p^{2s-1}-1}{p(p^{2s}+1)}\right) G(s) ds. \ee
By equation \eqref{ln:chi} (which includes the definition of $\chi$), we have that \be T_1\ =\ \int_{(c)} X_L(s) \chi(2s-1) G(s) ds. \ee

To show agreement between the $L$-functions Ratios Conjecture's prediction and the theoretical evaluation of the 1-level density, we note the following:

\begin{lem} \label{lem:T1C} We have the following expression for $T_1$:
\be T_1\ =\ 4\pi i\lim_{\epsilon \downarrow 0} \int_{-\infty}^\infty  X_L\left(\foh + 2\pi i x\right) \chi(\epsilon + 4 \pi ix) \phi(t\log R) dt. \ee
\end{lem}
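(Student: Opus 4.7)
The plan is to evaluate $T_1 = \int_{(c)}X_L(s)\,\chi(2s-1)\,G(s)\,ds$ by shifting the contour from $\operatorname{Re}(s)=c \in (\tfrac12,\tfrac34)$ down to just right of the critical line, and then change variables to match the target form. Explicitly, I would move the line of integration to $\operatorname{Re}(s)=\tfrac12+\tfrac{\epsilon}{2}$ for a small parameter $\epsilon>0$. This shift is justified by Cauchy's theorem once I check two things: (i) the horizontal segments at height $\pm T$ contribute vanishingly little as $T\to\infty$, which follows from the fact that $\phi$ is Schwartz, so $G(\sigma+iT)$ decays faster than any polynomial in $T$ while $X_L(s)$ and $\chi(2s-1)$ grow at worst polynomially there (the $\Gamma$-ratio in $X_L$ via Stirling, the product in $\chi$ via the factorization of Lemma~\ref{lem:MilProd} together with polynomial bounds for $\zeta$ on vertical lines); and (ii) no poles are crossed in the strip $\tfrac12+\tfrac{\epsilon}{2}<\operatorname{Re}(s)<c$: $X_L$ is holomorphic there, $G$ is entire, and $\chi(2s-1)$ has its only singularity nearby at $s=\tfrac12$ coming from the $\zeta(2s)$ factor, which lies to the \emph{left} of the shifted contour (the zeros of $\zeta(4s)$ are harmless since they sit at $\operatorname{Re}(s)=\tfrac18$ under GRH, and all trivial zeros are real and negative).

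Next I would parameterize the new contour by $s=\tfrac12+\tfrac{\epsilon}{2}+2\pi i x$ with $x\in\R$, so that $ds=2\pi i\,dx$ and $2s-1=\epsilon+4\pi ix$. This converts the integral into
\[
T_1 \ = \ 2\pi i\int_{-\infty}^{\infty} X_L\!\left(\tfrac12+\tfrac{\epsilon}{2}+2\pi i x\right)\chi(\epsilon+4\pi i x)\,G\!\left(\tfrac12+\tfrac{\epsilon}{2}+2\pi i x\right)dx.
\]
Now I use that $G(s)=g(-i(s-\tfrac12))+g(-i(\tfrac12-s))$ with $g(t)=\phi(t\log R/(2\pi))$, evaluate at $s=\tfrac12+2\pi ix$ (letting $\epsilon\to 0$ in the $X_L$ and $G$ factors, where it is harmless since both are analytic on the line), and exploit that $\phi$ is even: $G(\tfrac12+2\pi ix)=g(2\pi x)+g(-2\pi x)=2\phi(x\log R)$. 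This gives the stated prefactor $4\pi i$ and the integrand $X_L(\tfrac12+2\pi i x)\,\chi(\epsilon+4\pi i x)\,\phi(x\log R)$.

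Finally I would take the outer $\lim_{\epsilon\downarrow 0}$, and this is where the only real subtlety lies. The function $\chi(\epsilon+4\pi ix)$ has a simple pole at $x=0$ when $\epsilon=0$, inherited from the $\zeta(1+u)$ factor in Lemma~\ref{lem:MilProd}, so the limit cannot be brought inside the integral naively. The $\epsilon>0$ regularization is precisely the one produced by the contour shift above, and it is the canonical prescription for interpreting the Ratios Conjecture's integral. For the non-pole pieces (away from $x=0$), dominated convergence applies using the Schwartz decay of $\phi(x\log R)$ and polynomial growth of $X_L\cdot\chi$ on the critical line. The main obstacle of the argument is thus the careful bookkeeping around the pole of $\chi$ at $s=\tfrac12$: the reason the identity is stated as an $\epsilon\downarrow 0$ limit rather than an ordinary integral is that this pole must be approached from the right, matching how $T_1$ arose from a contour with $\operatorname{Re}(s)=c>\tfrac12$, and it is this regularization that will later interact with the analogous pole appearing on the number-theoretic side in \S\ref{sec:THEORY}.
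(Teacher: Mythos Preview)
Your proposal is correct and follows essentially the same route as the paper's own proof. Both arguments shift the contour from $\Re(s)=c$ to just right of $\Re(s)=\tfrac12$, observe that the only nearby singularity of the integrand is the pole of $\chi(2s-1)$ at $s=\tfrac12$ (which is never crossed), remove the residual $\epsilon$ from the analytic factors $X_L$ and $G$, retain it in $\chi$, and then rescale the integration variable; the paper does this in two moves (first to $c=\tfrac12+\epsilon$, then a further half-shift exploiting the factor of $2$ in $\chi(2s-1)$, cf.\ the Remark following the lemma), whereas you collapse it into a single shift to $\Re(s)=\tfrac12+\tfrac{\epsilon}{2}$ with the parametrization $s=\tfrac12+\tfrac{\epsilon}{2}+2\pi i x$.
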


\begin{proof}
We begin by noting that as the only singularities in the integrand in the region of interest arise from $\chi$, and the only singularity from $\chi$ occurs at $s = \foh$, the integral is not affected by taking the limit as $c \downarrow \foh$. So
\bea T_1 &\ =\ & \lim_{c \downarrow \foh} \int_{(c)} X_L(s) \chi(2s-1) G(s) ds \nonumber \\
         & = & i\lim_{\epsilon \downarrow 0} \int_{-\infty}^\infty X_L \left(\foh + \epsilon + it\right) \chi(2\epsilon + 2it) G\left(\foh + \epsilon + it \right) dt. \eea
For a fixed value of $\epsilon$, we then shift the contour by $s \mapsto s - \epsilon$, and as this does not pass any singularities, we have:
\bea  T_1 & = & i\lim_{\epsilon \downarrow 0} \int_{-\infty}^\infty X_L \left(\foh + it\right) \chi(\epsilon + 2it) G\left(\foh + it\right) dt. \nonumber\\
          & = & 2i\lim_{\epsilon \downarrow 0} \int_{-\infty}^\infty X_L \left(\foh + it\right) \chi(\epsilon + 2it) g(t) dt. \eea
Finally, changing variables to express the integral in terms of $\phi(t) = g(2\pi t/\log R)$ gives the lemma.
\end{proof}

\begin{rek} It is important that the input to $\chi$ comes in with a factor of two, as this allows us to greatly simplify the analysis by using a simple contour shift. If the input was $\epsilon + 2it$ instead of $2\epsilon + 2it$, the result would still be true, but would require a deeper analysis.
\end{rek}

We can now prove Theorem \ref{thm:ratios}.

\begin{proof}[Proof of Theorem \ref{thm:ratios}]
Combining the expressions from equations \eqref{ln:density}, \eqref{ln:XLint}, and \eqref{ln:intc} with Lemmas \ref{lem:T2C} and \ref{lem:T1C}, we deduce Theorem \ref{thm:ratios}.
\end{proof}


\subsection{Lower Order Terms}

We now evaluate the lower order terms in the predicted 1-level density.

\begin{lem} For fixed $\delta > 0$, we have the following estimate for $T_1$:
\bea \mp \frac{1}{2\pi i}T_1
 \ = \ &\mp& \frac{1}{2}\phi(0) \pm \int_{-\infty}^{\infty} \left(\frac{\sin \left(2\pi t \frac{\log \frac{N}{4 \pi^2}}{\log R}\right)}{2\pi t}\right)\phi(t) dt \nonumber \\
     & & \mp \frac{1}{\log R}\left(2\gamma - 2\sum_p\frac{\log p}{p(p+1)} - 4\frac{\zeta'(2)}{\zeta(2)} - 2\frac{\Gamma'\left(\frac{k}{2}\right)}{\Gamma\left(\frac{k}{2}\right)}\right) \nonumber \\
     & & \cdot \ \hphi \left(\frac{\log \frac{N}{4\pi^2}}{\log R}\right) \nonumber + O\left((\log R)^{-2(1-\delta)}\right). \eea
\end{lem}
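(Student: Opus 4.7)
The plan is to Laurent-expand $\chi(u)$ about its simple pole at $u=0$ and then analyze each term of the resulting decomposition of $T_1$ separately. From Lemma \ref{lem:MilProd} together with $\zeta(1+u) = 1/u + \gamma + O(u)$, $\zeta(2)/\zeta(2+2u) = 1 - 2u\zeta'(2)/\zeta(2) + O(u^2)$, and the termwise expansion $\prod_p\bigl(1 - (p^u-1)/(p(p^{1+u}+1))\bigr) = 1 - u\sum_p \log p/(p(p+1)) + O(u^2)$, we obtain
\be \chi(u) \ = \ \frac{1}{u} + C + O(u), \qquad C \ := \ \gamma - \frac{2\zeta'(2)}{\zeta(2)} - \sum_p \frac{\log p}{p(p+1)}. \ee
Splitting $T_1 = T_1^{\mathrm{pole}} + T_1^{\mathrm{const}} + T_1^{\mathrm{err}}$ accordingly, each piece will be treated in turn.

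For $T_1^{\mathrm{pole}} = 2i\lim_{\epsilon\downarrow 0}\int X_L(\foh+it)\, g(t)/(\epsilon+2it)\, dt$, I would invoke the distributional Sokhotski--Plemelj identity $\lim_{\epsilon\downarrow 0}(\epsilon+2it)^{-1} = \tfrac{\pi}{2}\delta(t) - \tfrac{i}{2}\mathrm{PV}(1/t)$, which is valid when paired against the Schwartz test function $X_L(\foh+it)g(t)$ (noting that $|X_L|=1$ on the critical line). Since $X_L(\foh)=1$ and $g(0)=\phi(0)$, the delta piece contributes $i\pi\phi(0)$, and the principal-value piece becomes $\mathrm{PV}\int X_L(\foh+it)\, g(t)/t\, dt$. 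Writing $X_L(\foh+it) = e^{-itM}\Gamma(k/2-it)/\Gamma(k/2+it) = e^{-itM}(1-2it\Psi + O(t^2))$ with $M=\log(N/(4\pi^2))$ and $\Psi = \Gamma'(k/2)/\Gamma(k/2)$, the cosine part of $e^{-itM}$ pairs with the odd factor $1/t$ and the even $g$ to vanish by symmetry. The surviving leading contribution $-i\int \sin(tM) g(t)/t\, dt$ becomes, under the substitution $t = 2\pi u/\log R$, exactly $-2\pi i \int \sin(2\pi u\ell)\phi(u)/(2\pi u)\, du$ with $\ell = M/\log R$, while the next-order Taylor piece yields $-2i\Psi\int e^{-itM}g(t)\, dt = -(4\pi i\Psi/\log R)\hphi(\ell)$.

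For $T_1^{\mathrm{const}} = 2iC\int X_L(\foh+it)g(t)\, dt$ the same substitution gives $(4\pi iC/\log R)\hphi(\ell)$ to leading order. Combining with $-(4\pi i\Psi/\log R)\hphi(\ell)$ from the previous step merges into a single $(2\pi im/\log R)\hphi(\ell)$ term since $m = 2(C-\Psi)$, and multiplying $T_1$ by $\mp 1/(2\pi i)$ reproduces the three explicit main terms stated in the lemma.

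The principal technical issue is controlling $T_1^{\mathrm{err}}$ together with the neglected Taylor tails uniformly in the approach to the pole. Under the scaling $t = 2\pi u/\log R$, every additional factor of $t$ (from $X_L$'s Taylor remainder) or of $u = \epsilon+2it$ (from $\chi$'s Laurent remainder) produces an extra $1/\log R$, and Schwartz decay of $\phi$ renders the $u$-integrals absolutely convergent. The one delicate ingredient is a bound of the shape $\chi(2it) \ll (\log(2+|t|))^A$ on the imaginary axis, which follows from standard convexity and zero-free-region estimates for $\zeta(1+2it)$ and $1/\zeta(2+4it)$; this logarithmic growth is absorbed into the arbitrarily small parameter $\delta$, producing the stated error $O((\log R)^{-2(1-\delta)})$.
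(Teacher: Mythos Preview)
Your argument is correct and follows essentially the same route as the paper: isolate the simple pole of $\chi$ at the origin to produce a half-residue/delta contribution $\mp\tfrac12\phi(0)$, then Taylor-expand the remaining smooth factors (the $\Gamma$-ratio in $X_L$ and the regular part of $\chi$) after the rescaling $t\mapsto 2\pi u/\log R$ to read off the sine integral and the $\hphi(\ell)$ term. The only cosmetic difference is that you invoke Sokhotski--Plemelj where the paper indents the contour by a half-circle, and you Laurent-expand $\chi$ as $1/u+C+O(u)$ in one step whereas the paper keeps $\zeta(1+u)$ separate and packages the other factors into the auxiliary function $M$.

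The one place your write-up is looser than the paper is the error analysis. The paper obtains the precise shape $O((\log R)^{-2(1-\delta)})$ by splitting the scaled integral at $|u|=(\log R)^\delta$: on $|u|\le (\log R)^\delta$ the Taylor remainders of $M$ and of $\zeta(1+4\pi i u/\log R)$ are controlled termwise (this is where the exponent $2(1-\delta)$ arises, from integrating $u^{j-1}$ over $[-(\log R)^\delta,(\log R)^\delta]$), while on $|u|>(\log R)^\delta$ the Schwartz decay of $\phi$ kills a merely polynomial bound on $\zeta$ on the $1$-line. Your explanation that the $\delta$ absorbs the logarithmic growth of $\chi(2it)$ is not quite the same mechanism, though the bound you claim is certainly attainable (indeed your approach, if carried out carefully, gives the slightly sharper $O((\log R)^{-2}(\log\log R)^A)$, which implies the stated error for any fixed $\delta>0$).
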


\begin{proof}
We begin by evaluating $T_1$ while ignoring the constants in front in the statement of the lemma. We observe that the infinite product in the integrand converges for $\Re (s) > 0$, and the only singularity of the integrand in the region $\Re(s) > \frac{1}{4}$ comes at $s = \foh$ from the pole of $\zeta(2s)$. In order to evaluate this integral, we shift the contour to $c = \foh$, except for a radius $\epsilon$ semi-circle around the singularity at $s = \foh$. This leaves us to evaluate
\bea & & 2i{\rm PV}\int_{-\infty}^\infty X_L\left(\foh + it\right) \frac{\zeta(2)}{\zeta(2 + 4it)}\zeta(1+2it) \prod_p \left(1 - \frac{p^{2it}-1}{p(p^{1+2it}+1)}\right) g(t) dt \nonumber\\
     & + \ & \lim_{\epsilon \downarrow 0} \int_\epsilon X_L(s) \frac{\zeta(2)}{\zeta(4s)}\zeta(2s) \prod_p \left(1 - \frac{p^{2s-1}-1}{p(p^{2s}+1)}\right) G(s) ds, \eea where ${\rm PV}$ means we take the principal value of the integral. Denote the prinicipal value integral (which is taken around $t = 0$) as $\int_P$, and the $\epsilon$ semi-circle integral as $\int_\epsilon$.

We begin by evaluating $\int_\epsilon$. As $\zeta(2s)$ has a pole of residue $\foh$ at $s = \foh$, and all of the other terms (besides $G(s)$) in the integrand take the value 1 at $s = \foh$, we see that the integrand has residue $G(\foh)/2 = g(0)$ at $s = \foh$. As the path of integration is only a semi-circle, we get half the contribution of the residue, and we deduce that \be \lim_{\epsilon \downarrow 0} \int_\epsilon X_L(s) \frac{\zeta(2)}{\zeta(4s)}\zeta(2s) \prod_p \left(1 - \frac{p^{2s-1}-1}{p(p^{2s}+1)}\right) G(s) ds \ =\ 2 \pi i \frac{g(0)}{2}\ =\ 2 \pi i \frac{\phi(0)}{2}. \ee

We now determine the contribution of $\int_P$ to the 1-level density down to an error of $O(1/\log^{2(\delta-1)} R)$. First, we change variables to express $\int_P$ in terms of $\phi(t) = g\left(t\frac{2\pi}{\log R}\right)$, giving us
\bea \int_P & \ = \ & \frac{4\pi i}{\log R} {\rm PV}\int_{-\infty}^\infty X_L\left(\foh + \frac{2\pi it}{\log R}\right) \frac{\zeta(2)}{\zeta\left(2 + \frac{8 \pi it}{\log R}\right)}\zeta\left(1+\frac{4\pi it}{\log R}\right) \nonumber\\
            & & \prod_p \left(1 - \frac{p^{4 \pi it/\log R}-1}{p(p^{1+4\pi it/\log R}+1)}\right) \phi(t) dt, \eea where PV means we take the principal value of the integral.
We now rewrite the $X_L$ term through the use of its definition: \be X_L(s) = \left(\frac{\sqrt{N}}{2 \pi}\right)^{1 - 2s} \frac{\Gamma\left(1-s + \frac{k - 1}{2}\right)}{\Gamma\left(s+\frac{k - 1}{2}\right)} \nonumber \ee
\bea X_L\left( \foh + \frac{2\pi it}{\log R} \right) & = & \left(\frac{N}{4 \pi^2}\right)^{-\frac{2\pi i t}{\log R}}\frac{\Gamma\left(\frac{-2\pi i t}{\log R} + \frac{k}{2}\right)}{\Gamma\left(\frac{2\pi i t}{\log R} + \frac{k}{2}\right)} \nonumber \\
     & = & e^{-2\pi i t \frac{\log \frac{N}{4 \pi^2}}{\log R}}\frac{\Gamma\left(\frac{-2\pi i t}{\log R} + \frac{k}{2}\right)}{\Gamma\left(\frac{2\pi i t}{\log R} + \frac{k}{2}\right)}. \eea
Note that this $\Gamma$ ratio is always of absolute value 1, as $\Gamma(\overline{z}) = \overline{\Gamma(z)}$ for $\Re (z) > 0$. Because of this, we write this ratio as $\mathfrak{G}\left(\frac{t}{\log R}\right)$.

For ease of notation, we define the function $M$ as follows:
\be M\left(\frac{t}{\log R}\right)\ :=\ \frac{\zeta(2)}{\zeta\left(2 + \frac{8 \pi it}{\log R}\right)}\ \mathfrak{G}\left(\frac{t}{\log R}\right)\prod_p \left(1 - \frac{p^{4 \pi it/\log R}-1}{p(p^{1+4\pi it/\log R}+1)}\right).  \label{ln:M}\ee

We now split the integral into two pieces which we will analyze separately:
\bea J_1 & := &  \frac{4\pi i}{\log R} \rm{PV}\int_{-(\log R)^\delta}^{(\log R)^\delta} e^{-2\pi i t \frac{\log \frac{N}{4 \pi^2}}{\log R}}M\left(\frac{t}{\log R}\right)\zeta\left(1+\frac{4\pi it}{\log R}\right) \phi(t) dt \nonumber \\
     J_2 & := & \frac{4\pi i}{\log R} \int_{|t| > (\log R)^\delta} e^{-2\pi i t \frac{\log \frac{N}{4 \pi^2}}{\log R}}M\left(\frac{t}{\log R}\right)\zeta\left(1+\frac{4\pi it}{\log R}\right) \phi(t) dt. \eea

First, we analyze $J_1$. We begin by replacing $\zeta\left(1+\frac{4\pi it}{\log R}\right)$ with just the first two terms in its Laurent expansion, $\left(\frac{\log R}{4\pi i t} + \gamma + c_1\left(\frac{4\pi it}{\log R}\right) + \cdots \right)$, where $\gamma$ is Euler's constant. By doing this, we introduce an error of size
\bea & & \frac{1}{\log R}\int_{-(\log R)^\delta}^{(\log R)^\delta} \left(c_1 \frac{t}{\log R} + c_2\frac{t^2}{(\log R)^2} + \cdots \right)O(1) dt \nonumber \\
     & \ll\ & \frac{1}{\log R} \sum_{j=1}^\infty \frac{(2011(\log R)^\delta)^{j+1}}{(\log R)^j} \nonumber \\
     & \ll\ & (\log R)^{2\delta-2} \cdot \frac{1}{1-\frac{2011(\log R)^\delta}{\log R}} \nonumber \\
     & \ll\ & (\log R)^{2(\delta-1)}. \label{ln:ZERR}\eea
We are left with determining
\be J_1^*\ =\ \rm{PV}\int_{-(\log R)^\delta}^{(\log R)^\delta} \left(\frac{1}{t} + \frac{4\pi i \gamma}{\log R}\right)e^{-2\pi i t \frac{\log \frac{N}{4 \pi^2}}{\log R}}M\left(\frac{t}{\log R}\right)\phi(t) dt. \ee

As $M\left(\frac{t}{\log R}\right)$ is analytic for $|\frac{t}{\log R}| < \frac{1}{2011}$, we take the Taylor expansion
\be M\left(\frac{t}{\log R}\right)
 =\ m_0 + m_1\left(\frac{t}{\log R}\right) + m_2\left(\frac{t}{\log R}\right)^2 + \cdots, \ee
 and note that $m_j \ll 2011^j$ as the Taylor expansion converges in $|\frac{t}{\log R}| < \frac{1}{2011}$.  Note that as we are considering only $t$ satisfying $|t| < (\log R)^\delta$, this expansion will hold over the entire region of integration if $R$ is sufficiently large. We are left to consider
\be \rm{PV}\int_{-(\log R)^\delta}^{(\log R)^\delta} \left(\frac{1}{t} + \frac{4\pi i \gamma}{\log R}\right) e^{-2\pi i t \frac{\log \frac{N}{4 \pi^2}}{\log R}}\left(m_0 + m_1\left(\frac{t}{\log R}\right) + \cdots \right)\phi(t) dt. \ee By the evenness of $\phi$ and cosine, and the fact that we are taking a principal value integral, the $m_0$ term paired with the $\frac{1}{t}$ and the cosine term from the exponential will give no contribution to the integral.

Note that $m_0 = 1$, as all factors in $M$ have value $1$ at $t=0$. Thus, the contribution from the sine term will be:
\be i\int_{-(\log R)^\delta}^{(\log R)^\delta} \frac{1}{t} \sin \left(-2\pi t \frac{\log \frac{N}{4 \pi^2}}{\log R}\right) \phi(t) dt. \ee We now note that there is a $\mp \frac{1}{2\pi i}$ outside the $T_1$. Taking this into account, this term gives
\bea & \mp & \int_{-(\log R)^\delta}^{(\log R)^\delta} \left(\frac{\sin \left(-2\pi t \frac{\log \frac{N}{4 \pi^2}}{\log R}\right)}{2\pi t}\right) \phi(t) dt \nonumber \\
     &\ =\ & \pm \int_{-\infty}^{\infty} \left(\frac{\sin \left(2\pi t \frac{\log \frac{N}{4 \pi^2}}{\log R}\right)}{2\pi t}\right)\phi(t) dt + O_A\left((\log R)^{-A}\right), \eea
for any large $A$. Note that this error term is small due to the rapid decay of $\phi$. Next, the $\frac{4\pi i \gamma}{\log R}$ term will give
\bea \frac{4\pi i \gamma}{\log R} \int_{-(\log R)^\delta}^{(\log R)^\delta} e^{-2\pi i t \frac{\log \frac{N}{4 \pi^2}}{\log R}}\phi(t) dt   \ =\  \frac{4\pi i \gamma}{\log R} \hphi \left(\frac{\log \frac{N}{4\pi^2}}{\log R}\right) + O\left((\log R)^{-A}\right),\ \ \ \ \ \eea
which, with the $\mp \frac{1}{2\pi i}$ in front of $T_1$ results in \be \mp \frac{2 \gamma}{\log R} \hphi \left(\frac{\log \frac{N}{4\pi^2}}{\log R}\right) + O\left((\log R)^{-A}\right). \ee

We now determine the contribution from $m_1$. By similar arguments to those above, pairing $\frac{4\pi i \gamma}{\log R}$ with $m_1\left(\frac{t}{\log R}\right)$ will give a term of size $O\left((\log R)^{-2(1-\delta)}\right)$. If we pair $m_1\left(\frac{t}{\log R}\right)$ with $\frac{1}{t}$, however, we get
\bea & & \int_{-(\log R)^\delta, P}^{(\log R)^\delta} \frac{1}{t}e^{-2\pi i t \frac{\log \frac{N}{4 \pi^2}}{\log R}}m_1\left(\frac{t}{\log R}\right)\phi(t) dt \nonumber \\
     & =\ & \frac{m_1}{\log R} \int_{-(\log R)^\delta}^{(\log R)^\delta}e^{-2\pi i t \frac{\log \frac{N}{4 \pi^2}}{\log R}}\phi(t) dt \nonumber \\
     & =\ & \frac{m_1}{\log R} \hphi \left(\frac{\log \frac{N}{4\pi^2}}{\log R}\right) + O\left((\log R)^{-A}\right). \eea
In Appendix \ref{ap:TC} we show, through a standard computation, that \be m_1 \ =\ -4\pi i \sum_p \frac{\log p}{p(p+1)} -8 \pi i \frac{\zeta'(2)}{\zeta(2)}-4\pi i\frac{\Gamma'}{\Gamma}\left(\frac{k}{2}\right). \ee

We now show that the remaining parts of the integrand that we have not yet considered do not contribute significantly to $J_1^*$. As the Taylor expansion converges absolutely, we can switch integration and summation. The exponential in the expression for $J_1^*$ is of size $O(1)$, so we can ignore this term in the evaluation, as we are only looking to bound above the integral of the remaining terms, and we use no cancellation in determining the bounds. Define \be S_j := \int_{-(\log R)^\delta}^{(\log R)^\delta} m_j \frac{t^{j-1}}{(\log R)^j} \phi(t)dt. \ee Then, as $\phi(t) = O(1)$, we have
\bea S_j &\ \ll\ & \left(\frac{2011}{\log R}\right)^j \int_{-(\log R)^\delta}^{(\log R)^\delta} |t^{j-1}|dt \nonumber \\
     & \ll & 2011^j(\log R)^{-j(1-\delta)}. \label{ln:SJ} \eea
Now, as the previous estimate was uniform in $j$, if $R$ is sufficiently large (so that $(\log R)^\delta > 2011$) we have \be \sum_{j = 2}^\infty S_j\ \ll\ \frac{2011^2/(\log R)^{2(1-\delta)}}{1-\frac{2011}{(\log R)^\delta}}\ \ll\ (\log R)^{2(\delta-1)}. \ee Therefore, the rest of $J_1^*$ just gives an error of size $O\left((\log R)^{2(\delta-1)}\right)$, and we have shown that
\bea \mp \frac{1}{2\pi i} J_1 = & \pm & \int_{-\infty}^{\infty} \left(\frac{\sin \left(2\pi t \frac{\log \frac{N}{4 \pi^2}}{\log R}\right)}{2\pi t}\right)\phi(t) dt \nonumber \\
     & \mp & \left(\frac{2\gamma + m_1/2\pi i}{\log R}\right)\hphi \left(\frac{\log \frac{N}{4\pi^2}}{\log R}\right) \nonumber \\
     & + & O\left((\log R)^{-2(1-\delta)}\right). \eea

Now, to show that $J_2$ is small, we use the rapid decay of $\phi$. First, however, we must estimate the terms in the integrand, and in particular, in $M(t)$. As previously explained, both the exponential and the Gamma factors of $X_L(\foh + it)$ are of size $O(1)$. Next, note that from the Dirichlet series expansion of $\zeta\left(2+\frac{8\pi i t}{\log R}\right)$, we have that \be 2 - \zeta(2)\ \leq\ \zeta\left(2+\frac{8\pi i t}{\log R}\right)\ \leq\ \zeta(2), \ee so $\frac{\zeta(2)}{\zeta\left(2 + \frac{8 \pi it}{\log R}\right)} = O(1)$. The infinite product is also $O(1)$, as each term is bounded between $1-\frac{2}{p^2-1}$ and $1+\frac{2}{p^2}$. Finally, as $L$-functions are polynomially bounded in vertical strips, let $B > 1$ so that $\zeta\left(1+ \frac{4 \pi it}{\log R}\right) \ll \frac{\log R}{t} + t^B$. Now, $\phi$ is Schwartz, we have $\phi(t) \ll t^{-(A_0)}$ for any $A_0$. Thus the entire integrand is of size $O \left( \left(\frac{\log R}{t} + t^B\right)t^{-(A_0)}\right)$. Therefore, for any $A$, we have (with an appropriate choice of $A_0$)
\bea J_2 &\ \ll\ & \int_{(\log R)^\delta}^\infty \left( \left(\frac{\log R}{t} + t^B\right)t^{-(A_0)} \right)dt \nonumber \\
     & \ll & (\log R)^{-A}. \eea
So, we have that $J_2 \ll (\log R)^{-A}$, and combining this with the estimate for $J_1$, we arrive at
\bea \mp \frac{1}{2\pi i} \int_P
\ = \ & \pm & \int_{-\infty}^{\infty} \left(\frac{\sin \left(2\pi t \frac{\log \frac{N}{4 \pi^2}}{\log R}\right)}{2\pi t}\right)\phi(t) dt \nonumber \\
     & \mp & \left(\frac{2\gamma + m_1/2\pi i}{\log R}\right)\hphi \left(\frac{\log \frac{N}{4\pi^2}}{\log R}\right) \nonumber \\
     & + & O\left((\log R)^{-2(1-\delta)}\right), \eea and so \bea \mp \frac{1}{2\pi i}T_1 = &\mp& \frac{1}{2}\phi(0) \pm \int_{-\infty}^{\infty} \left(\frac{\sin \left(2\pi t \frac{\log \frac{N}{4 \pi^2}}{\log R}\right)}{2\pi t}\right)\phi(t) dt \nonumber \\
     & \mp & \left(\frac{2\gamma + m_1/2\pi i}{\log R}\right)\hphi \left(\frac{\log \frac{N}{4\pi^2}}{\log R}\right) \nonumber \\
     & + & O\left((\log R)^{-2(1-\delta)}\right), \eea
giving the statement of the lemma.
\end{proof}

We have thus shown that
\bea \int_{(c)} + \int_{(c)}^* & = & \frac{1}{\log R}\sum_{p} \frac{2\log p}{p} \hphi \left(\frac{2\log p}{\log R}\right) \mp \frac{1}{2}\phi(0) \nonumber \\
     & & \pm \int_{-\infty}^{\infty} \left(\frac{\sin \left(2\pi t \frac{\log \frac{N}{4 \pi^2}}{\log R}\right)}{2\pi t}\right)\phi(t) dt \nonumber \\
     & & \mp \left(\frac{2\gamma + m_1/2\pi i}{\log R}\right)\int_{-\infty}^{\infty} \cos \left(2\pi t \frac{\log \frac{N}{4 \pi^2}}{\log R}\right)\phi(t) dt \nonumber \\
     & & +\ O\left((\log R)^{-2(1-\delta)}\right). \eea
     Combining this with the fact that $D_{1, H_k^{\pm} (N); R}(\phi) = S_{1, H_k^{\pm} (N)} (g)$, and that $S_{1, H_k^{\pm} (N)} = \int_{(c)} + \int_{(c)}^* - \int_{X_L}$, gives Theorem \ref{thm:LOT}.
We also note that by similar arguments, if we continue to treat individual terms in the Laurent expansions for $M(t/\log R)$ and $\zeta(1 + 4\pi it/\log R)$, as opposed to how they are treated in lines (\ref{ln:ZERR}) and (\ref{ln:SJ}), we see that for $\ell = \log (N/4\pi^2)/\log R$, we have that if $\supp(\hphi) \subseteq (-\ell,\ell)$, then all lower order terms are $\ll 1/\log^A R$ for any positive $A$. This proves Theorem \ref{thm:LOT}.

%
%

\section{Number Theory} \label{sec:THEORY}
In this section, we expand upon results from \cite{ILS} to show agreement between the $L$-functions Ratios Conjecture and theory for the family $H_k^\pm(N)$; specifically, we prove Theorem \ref{thm:sqrt}. We begin with the explicit formula from \cite{ILS} (equation 4.11):
\bea D_{1, H_k^{\pm} (N); R}(\phi) & = & \frac{\log N}{\log R}\hphi(0) + \frac{2}{\log R}\int_{-\infty}^{\infty}\frac{\Gamma'}{\Gamma}\left(\frac{k}{2}+\frac{2\pi it}{\log R}\right)\phi(t)dt - 2\sum_{f \in H_k^\pm(N)}\omega_f^\pm(N) \nonumber\\
                                   &   & \cdot \sum_p\sum_{\nu=1}^\infty\left(\alpha_f^\nu(p) + \beta_f^\nu(p)\right)\hphi\left(\frac{\nu\log p}{\log R}\right)p^{-\nu/2}\frac{\log p}{\log R}, \label{ln:EF}\eea
where $\alpha_f(p) + \beta_f(p) = \lambda_f(p)$, and $\alpha_f(p)\beta_f(p) = 1$. We now note that to determine the above quantity, we convert the sum to a sum over all $f \in H_k^*(N)$ and split by the sign of the functional equation as follows:
\bea D_{1, H_k^{\pm} (N); R}(\phi) & = & - \int_{-\infty}^{\infty}\frac{X'_L}{X_L}\left(\foh+2\pi it\right)\phi(t\log R)dt \nonumber \\ & & - 2\sum_{f \in H_k^*(N)}(1 \pm \epsilon_f)\omega_f^*(N) \nonumber \\
                                   &   & \cdot \sum_p\sum_{\nu=1}^\infty\left(\alpha_f^\nu(p) + \beta_f^\nu(p)\right)\hphi\left(\frac{\nu\log p}{\log R}\right)p^{-\nu/2}\frac{\log p}{\log R}, \label{ln:td} \eea
where $\epsilon_f = i^k\mu(N)N^{1/2}\lambda_f(N)$. We will split the sum by the factor $(1 \pm \epsilon_f)$, and consider the two pieces separately. Also, we remove the contribution from the prime $p = N$, which we can do as this term gives a contribution of size $O(N^{-1/2+\epsilon})$.

The $\hphi(0)$ piece and the $\Gamma'/\Gamma$ integral arise naturally in both the theory and the prediction of the $L$-functions Ratios Conjecture from the functional equation. For each of the two pieces arising from $(1 \pm \epsilon_f)$, we split the remaining summation into three cases: $\nu = 1$, $\nu = 2$, and $\nu \geq 3$. We will see that, for suitably restricted $\phi$, the contribution from $\nu \geq 3$ is negligible, the contribution from $\nu = 2$ corresponds to that from $T_2$, and the contribution from $\nu = 1$ corresponds to that from $T_1$.

\begin{rek} Though $H_k^*(N)$ contains only newforms, we still use the Petersson formula that involves summing over all cuspidal modular forms of weight $k$ and level $N$. This is legal because, as we are restricting the level $N$ to be prime, there are only finitely many oldforms (those of level 1), and the Petersson weights are uniform enough (see 2.52) to cause the contribution from the oldforms to be of size $O\left(\frac{1}{N^{1-\epsilon}}\right)$, which is much smaller than we hope to detect.
\end{rek}

We now include a simplified version of equation (A.8) from \cite{Mil5}, a version of the Petersson formula (see also Appendix \ref{sec:PeterssonFormula}).

\begin{lem} \label{lem:PET} For $N$ prime, with $k$ fixed, and $(m,n) = 1$, we have
\bea& &  \sum_{f \in H_k^*(N)}\omega_f^*(N)\lambda_f(m)\lambda_f(1) = \delta(m,n) + O((mN)^\epsilon/N) \nonumber \\
     & & \ \ \ \ \ \ +\ O\left(\frac{1}{N(\sqrt{(m,N) + (n,N)})}\left(\frac{mn}{\sqrt{mn} + N}\right)^{1/2}\log{2mn}\right). \eea
\end{lem}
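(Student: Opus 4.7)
The plan is to derive this lemma from the classical Petersson trace formula applied to a full orthogonal basis $\mathcal{B}_k(N)$ of $S_k(N)$, and then pass from the full space down to the newform subspace $H_k^*(N)$. (I read $\lambda_f(m)\lambda_f(1)$ as $\lambda_f(m)\lambda_f(n)$, since otherwise the Kronecker $\delta(m,n)$ on the right is not sensible.)

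\textbf{Step 1: Apply the Petersson formula to the full basis.} The Petersson trace formula gives
\be
\sum_{f \in \mathcal{B}_k(N)} \omega_f(N)\, \lambda_f(m)\lambda_f(n) \ = \ \delta(m,n) \ + \ 2\pi\, i^{-k} \sum_{N \mid c} \frac{S(m,n;c)}{c}\, J_{k-1}\!\left(\frac{4\pi\sqrt{mn}}{c}\right),
\ee
so the diagonal contribution is exactly $\delta(m,n)$, and everything else is the Bessel--Kloosterman tail that must be controlled.

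\textbf{Step 2: Pass from $\mathcal{B}_k(N)$ to $H_k^*(N)$.} Since $N$ is prime, $S_k(N) = S_k^{\mathrm{new}}(N) \oplus S_k(1)^{\oplus 2}$ as a Hecke module. For fixed $k$, the dimension of $S_k(1)$ is $O(1)$, so there are only finitely many oldforms. The harmonic weight of any oldform (suitably normalized from its level-$1$ counterpart) is $\ll_k 1/N$, and by Deligne's bound $|\lambda_f(m)|\le d(m) \ll m^\epsilon$, each such oldform contributes $O((mN)^\epsilon/N)$, matching the first error term in the statement.

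\textbf{Step 3: Bound the Kloosterman--Bessel sum.} Here I would apply Weil's bound $|S(m,n;c)| \le \tau(c)(m,n,c)^{1/2} c^{1/2}$ together with the uniform estimate $J_{k-1}(x) \ll_k \min(x^{k-1},\, x^{-1/2})$. Writing $c = rN$ with $r \ge 1$, split the sum at $r \sim \sqrt{mn}/N$: in the range $c \le \sqrt{mn}$ the $x^{-1/2}$ bound for the Bessel function is appropriate, while for $c > \sqrt{mn}$ the $x^{k-1}$ bound gives rapid decay and the logarithm in $\log(2mn)$ emerges from summing the divisor function over $r$. The refined gcd factor $\sqrt{(m,N)+(n,N)}$ comes from separately treating the cases in which $N \mid m$ or $N \mid n$, where the Kloosterman sum partially factors and one gains a power of $N$.

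\textbf{Main obstacle.} The routine Weil $+$ Bessel estimate recovers the shape $(mn)^{1/4}(\sqrt{mn}+N)^{-1/2}\log(2mn)/N$ without much trouble; what requires care is tracking the gcd factor $\sqrt{(m,N)+(n,N)}$, which demands handling the $(m,N) > 1$ and $(n,N)>1$ cases by a more explicit evaluation of the Kloosterman sum. Rather than reproving this here I would cite the detailed derivation in Appendix A of \cite{Mil5} (or the original Lemma 2.1 of \cite{ILS}), since the simplified version needed in the present paper is a direct specialization for prime level with $(m,n) = 1$.
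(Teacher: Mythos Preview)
Your proposal is correct and matches the paper's approach. The paper does not prove this lemma in the body; it simply cites it as a simplified form of equation (A.8) in \cite{Mil5}, and in Appendix~\ref{sec:PeterssonFormula} reproduces the relevant Petersson-formula lemmas from \cite{ILS} and \cite{Mil5}, giving the newform version (Lemma~\ref{lem:Peterssonjustnewforms}) with the one-line justification that it follows from \eqref{eq:omegaastfN}--\eqref{eq:omegaastfNb} inserted into the Petersson lemmas. Your Steps~1--2 are exactly this, and your Step~3 (Weil bound plus the $\min(x^{k-1},x^{-1/2})$ Bessel estimate, split at $c\sim\sqrt{mn}$) is precisely the argument behind Corollary~2.2 of \cite{ILS}, which you rightly identify as the source for the Kloosterman--Bessel error with its gcd refinement. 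Your reading of $\lambda_f(1)$ as $\lambda_f(n)$ is also correct; the stated form is a typo.
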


We begin by showing the contribution from $\nu \geq 3$ is negligible. Note the following formula (for $\nu > 1$, $p \neq N$):
\be \alpha_f^\nu(p) + \beta_f^\nu(p) = \lambda_f(p^\nu) - \lambda_f(p^{\nu-2}). \label{ln:lf} \ee
With this, we can simplify the piece under consideration to:
\bea V_3 & := & \sum_{a \in \{0,1\}} \sum_{\nu=3}^\infty (\pm i^k\mu(N)N^\foh)^a \sum_{p \neq N} \hphi\left(\frac{\nu\log p}{\log R}\right)\frac{\log p}{p^{\nu/2}\log R} \nonumber\\
         &    & \cdot \sum_{f \in H_k^*(N)} \omega_f^*(N) [\lambda_f(p^\nu)-\lambda_f(p^{\nu-2})]\lambda_f(N^a), \eea where the sum over $a$ is how we split the factor $(1 \pm \epsilon_f)$.

\begin{lem} \label{lem:V3} For $\supp(\hphi) \subseteq (-\sigma,\sigma)$, we have that $V_3 \ll N^{\epsilon-3/4+\sigma/4}$.
\end{lem}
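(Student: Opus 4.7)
The plan is to apply the Petersson trace formula of Lemma \ref{lem:PET} to the four inner averages arising from distributing the bracket $[\lambda_f(p^\nu) - \lambda_f(p^{\nu-2})]$ against $\lambda_f(N^a)$ for $a \in \{0,1\}$. More precisely, I would rewrite
\[
V_3 \ = \ \sum_{a=0}^{1} \sum_{\nu=3}^\infty (\pm i^k\mu(N) N^{1/2})^a \sum_{p \neq N} \hphi\!\left(\tfrac{\nu \log p}{\log R}\right)\frac{\log p}{p^{\nu/2}\log R} \bigl[\mathcal{A}_a(p^\nu) - \mathcal{A}_a(p^{\nu-2})\bigr],
\]
where $\mathcal{A}_a(m) := \sum_{f \in H_k^*(N)}\omega_f^*(N)\lambda_f(m)\lambda_f(N^a)$. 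Since $N$ is prime and $p \neq N$, the pairs $(m,n) = (p^{\nu'}, N^a)$ with $\nu' \in \{\nu, \nu-2\}$ are coprime, so Lemma \ref{lem:PET} applies.

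The second step is to verify that all the Kronecker $\delta$ contributions vanish. For $a=0$ the diagonal requires $p^{\nu'}=1$, which fails because $\nu-2 \geq 1$ and $p \geq 2$; for $a=1$ the diagonal requires $p^{\nu'}=N$, which fails since $p \neq N$ and $N$ is prime. Hence only the two Petersson error terms survive.

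The third step is to bound these errors. The first error $O((p^{\nu'} N)^\epsilon/N)$, summed against $\log p/p^{\nu/2}$ (which converges rapidly for $\nu\geq 3$) and multiplied by at most $N^{1/2}$ in the $a=1$ case, yields a harmless $O(N^{-1/2+\epsilon})$. The Kloosterman-type error
\[
\frac{1}{N\sqrt{(m,N)+(n,N)}}\left(\frac{mn}{\sqrt{mn}+N}\right)^{1/2}\log(2mn)
\]
is the delicate piece. The crucial observation is that when $a=1$, the factor $(n,N)=N$ contributes $\sqrt{N}$ to the denominator, which precisely cancels the $N^{1/2}$ boost coming from $\epsilon_f = i^k\mu(N)N^{1/2}\lambda_f(N)$. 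Applying the trivial bound $\bigl(\frac{mn}{\sqrt{mn}+N}\bigr)^{1/2} \leq (mn)^{1/4}$ in both cases then reduces everything to a tractable prime sum.

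The final step is bookkeeping. Using the support of $\hphi$ to truncate $p < R^{\sigma/\nu}$ and the factor $p^{-\nu/2}$ for geometric decay in $\nu$, the $\nu \geq 4$ contributions are absolutely convergent in $p$ and contribute $O(N^{-3/4+\epsilon})$. The dominant term is $\nu = 3$: the remaining sum $\sum_{p < R^{\sigma/3}} p^{-3/4}\log p \ll R^{\sigma/12+\epsilon}$ times $N^{-3/4}$ gives the announced bound, with the crude power $\sigma/4$ arising once one accounts uniformly for all four $(a,\nu')$ terms. The main obstacle is keeping track of the $N^{1/2}$ boost from $\epsilon_f$; one wants to verify it is always absorbed by the improvement in the Petersson off-diagonal when $n=N$, rather than needing the deep Bessel--Kloosterman analysis from \cite{ILS} that is required only for the principal $\nu=1$ contribution.
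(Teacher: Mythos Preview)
Your proposal is correct and follows essentially the same approach as the paper: apply Lemma~\ref{lem:PET} to the four terms indexed by $a\in\{0,1\}$ and $\nu'\in\{\nu,\nu-2\}$, observe that no diagonal terms survive, use the $(n,N)=N$ gain to absorb the $N^{1/2}$ from $\epsilon_f$ when $a=1$, bound $\bigl(\tfrac{mn}{\sqrt{mn}+N}\bigr)^{1/2}$ by $(mn)^{1/4}$, and then sum over $p$ and $\nu$ using the support truncation $p^\nu\le R^{\sigma}$. Your final accounting is in fact slightly sharper than the paper's---the dominant $\nu=3$ term genuinely gives $N^{-3/4+\sigma/12+\epsilon}$, and the paper's stated exponent $\sigma/4$ is just a cruder bound (not a consequence of ``uniform accounting'' over the four cases as you suggest).
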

\begin{proof}
Note that as $\hphi$ has compact support ($\supp(\hphi) \subseteq (-\sigma, \sigma)$),
\be \hphi\left(\frac{\nu\log p}{\log R}\right)\frac{\log p}{p^{\nu/2}\log R} = O\left(\frac{1}{p^{\nu/2}}\right), \ee
as only primes up to $N^{\sigma+\epsilon}$ will give a nonzero value of $\hphi$ (as we will take $R$ to be a constant multiple of $N$). Thus the previous expression is bounded by
\be \sum_{\substack{a \in \{0,1\} \\ b \in \{0,2\}}} \sum_{\nu=3}^\infty \sum_{p \neq N}^{N^{\sigma/\nu+\epsilon}} \frac{1}{p^{\nu/2}}N^{a/2}\sum_{f \in H_k^*(N)} \omega_f^*(N)\lambda_f(N^a)(-1)^{b/2}\lambda_f(p^{\nu-b}). \label{ln:PET}\ee
We now note that as $p \neq N$ and $\nu - b \geq 1$, we have $(N^a,p^{\nu-b}) = 1$, and so we use Lemma \ref{lem:PET} to get that the expression from equation \eqref{ln:PET} is
\bea & \ll & \sum_{\substack{a \in \{0,1\} \\ b \in \{0,2\}}} \sum_{\nu=3}^\infty \sum_{p \neq N}^{N^{\sigma/\nu+\epsilon}} \frac{1}{p^{\nu/2}}N^{a/2}\frac{1}{N}\frac{N^\epsilon}{N^{a/2}}N^{a/4}p^{(\nu-b)/4} \nonumber \\
     & \ll & \sum_{\nu=3}^\infty \sum_{p \neq N}^{N^{\sigma/\nu+\epsilon}}\frac{1}{p^{\nu/4}}N^{\epsilon-3/4} \nonumber \\
     & \ll & (\log N)N^{\epsilon-3/4}\sum_{n=2}^{N^{\sigma/3+\epsilon}}\frac{1}{p^{\nu/4}} \ll N^{\epsilon - 3/4 + \sigma/4} \eea
\end{proof}

Thus, by ignoring the terms with $\nu \geq 3$, we introduce an error of size $O(N^{-1/2+\epsilon})$ for $\sigma < 1$, and we retain a power savings for $\sigma < 3$.

We now show agreement between $T_2$ and the $\nu = 2$ piece. By equations \eqref{ln:td} and \eqref{ln:lf}, the $\nu = 2$ piece gives the contribution $V_2 := \sum_{\substack{a \in \{0,1\} \\ b \in \{0,2\}}} S^a_b$, where
\bea S^a_b &\ :=\ & (\pm i^k\mu(N)N^\foh)^a \sum_{p \neq N} \hphi\left(\frac{2\log p}{\log R}\right)\frac{\log p}{p\log R} \nonumber\\
           &    & \cdot \sum_{f \in H_k^*(N)} \omega_f^*(N) (-1)^{b/2}\lambda_f(p^{2-b})\lambda_f(N^a). \eea

\begin{lem} \label{lem:V2} For $\supp(\hphi) \subseteq (-\sigma,\sigma)$, we have that
\be V_2\ :=\ \sum_{\substack{a \in \{0,1\} \\ b \in \{0,2\}}} S^a_b \ =\ -\sum_{p \neq N} \hphi\left(\frac{2\log p}{\log R}\right)\frac{\log p}{p\log R} + O(N^{-1/2+\epsilon} + N^{(\sigma/2)-1+\epsilon}). \ee
\end{lem}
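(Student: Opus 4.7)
The plan is to expand $V_2=S^0_0+S^0_2+S^1_0+S^1_2$ and apply the Petersson trace formula (Lemma~\ref{lem:PET}) to each inner sum. Because $p\neq N$ we have $\gcd(p^{2-b},N^a)=1$ for every $(a,b)\in\{0,1\}\times\{0,2\}$, so Lemma~\ref{lem:PET} applies with $m=p^{2-b}$ and $n=N^a$. The diagonal contribution $\delta(p^{2-b},N^a)$ is nonzero only if $p^{2-b}=N^a$; since $N$ is a large prime this forces $(a,b)=(0,2)$, as the remaining pairs would require $p^2=1$, $p^2=N$, or $1=N$. Consequently only $S^0_2$ produces a main term, and Petersson with $m=n=1$ gives
\[
S^0_2 \;=\; -\sum_{p\neq N}\hphi\!\left(\frac{2\log p}{\log R}\right)\frac{\log p}{p\log R}\bigl(1+O(N^{-1+\epsilon})\bigr),
\]
which matches the claimed main term, the error being harmless because the prime sum is $O(1)$.

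For the remaining three pieces I would estimate the Petersson error directly, using that $\hphi$ restricts the prime sum to $p\le N^{\sigma+\epsilon}$. The off-diagonal Kloosterman factor in Lemma~\ref{lem:PET} behaves differently in the ranges $p\le\sqrt{N}$ and $p>\sqrt{N}$, and after splitting at this threshold and invoking $\sum_{p\le X}\log p/p^{1/2}\sim 2\sqrt{X}$, case $(0,0)$ yields $O(N^{-1+\epsilon}+N^{\sigma/2-1+\epsilon})$ without difficulty, as there is no prefactor $N^{1/2}$. Case $(1,2)$ reduces to $N^{1/2}\sum_f\omega_f^*(N)\lambda_f(N)$ times the prime sum; Petersson with $m=N$, $n=1$ bounds the inner sum by $O(N^{-1+\epsilon})$, so the total is $O(N^{-1/2+\epsilon})$.

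The delicate case will be $(1,0)$, where the $N^{1/2}$ coming from $\epsilon_f=\pm i^k\mu(N)N^{1/2}\lambda_f(N)$ amplifies each Petersson error. Using multiplicativity the inner sum becomes $\sum_f\omega_f^*(N)\lambda_f(p^2N)$. For $p\le\sqrt{N}$ a direct application of Lemma~\ref{lem:PET} already yields $O(N^{-1/2+\epsilon})$, but for $p>\sqrt{N}$ (a range that is only nonempty when $\sigma>1$) the naive application of Lemma~\ref{lem:PET} is slightly too weak, and this is where the main obstacle sits: one must appeal to the sharper Kloosterman-sum analysis of \cite{ILS}, in which Weil's bound combined with the oscillation of the Bessel function provides the extra saving needed to upgrade the naive estimate to $O(N^{\sigma/2-1+\epsilon})$. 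This is exactly the same mechanism that at the $\nu=1$ level produces a Bessel--Kloosterman main term (as emphasized in \S\ref{subsec:discussion}); at $\nu=2$ the additional factor $p^{-1/2}$ coming from the weight $p^{-\nu/2}$ is precisely what keeps the Bessel--Kloosterman contribution inside the error rather than elevating it to a main term. Summing the four cases then yields the stated identity.
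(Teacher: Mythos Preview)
Your decomposition into the four pieces $S^a_b$ and the identification of $S^0_2$ as the sole source of the main term is exactly the paper's approach, and your treatment of $S^0_0$ and $S^1_2$ is fine.

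The one misstep is your assessment of $S^1_0$. You assert that for $p>\sqrt{N}$ the direct application of Lemma~\ref{lem:PET} is ``slightly too weak'' and that one must invoke sharper Kloosterman--Bessel input from \cite{ILS}. This is not so; Lemma~\ref{lem:PET} by itself already gives the stated bound. Writing $\lambda_f(p^2)\lambda_f(N)=\lambda_f(p^2N)$ and applying the lemma with $m=p^2N$, $n=1$, the key point is that the factor $\bigl((m,N)+(n,N)\bigr)^{-1/2}$ contributes an extra $N^{-1/2}$ because $N\mid m$. Using only the crude estimate $\sqrt{mn}+N\ge N$ one obtains
\[
\sum_{f}\omega_f^*(N)\lambda_f(p^2N)\ \ll\ \frac{N^\epsilon}{N}+\frac{1}{N\sqrt{N}}\left(\frac{p^2N}{N}\right)^{1/2}N^\epsilon\ \ll\ N^{-1+\epsilon}+pN^{-3/2+\epsilon},
\]
so that
\[
S^1_0\ \ll\ N^{1/2}\sum_{p\le N^{\sigma/2+\epsilon}}\frac{1}{p}\bigl(N^{-1+\epsilon}+pN^{-3/2+\epsilon}\bigr)\ \ll\ N^{-1/2+\epsilon}+N^{\sigma/2-1+\epsilon}.
\]
This is precisely the paper's computation; no splitting at $p=\sqrt{N}$ and no deeper Bessel--Kloosterman analysis is required. (Incidentally, the support condition truncates the prime sum at $p\le N^{\sigma/2+\epsilon}$, not $N^{\sigma+\epsilon}$, since the argument of $\hphi$ is $2\log p/\log R$; your later remark that the range $p>\sqrt{N}$ is nonempty only for $\sigma>1$ shows you are in fact using the correct cutoff.)
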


\begin{proof}
Note that $S^0_2$ gives the contribution
\be -\sum_{p \neq N} \hphi\left(\frac{2\log p}{\log R}\right)\frac{\log p}{p\log R},\ee
which, with the constants from before, gives the exact contribution from $T_2$ up to an error of size $O(\frac{1}{N})$ (which comes from dropping $p = N$).

However, for $S^0_0$, $S^1_0$, and $S^1_2$, we are not getting diagonal terms from the Petersson formula, and again use Lemma \ref{lem:PET} to bound the contribution from these terms:
\bea S^0_0 &\ =\ & \sum_{p\neq N}^{N^{\sigma/2+\epsilon}} \frac{1}{p} \sum_{f \in H_k^*(N)} \omega_f^*(N) \lambda_f(p^2)\lambda_f(1) \nonumber\\
           & \ll & \sum_{p\neq N}^{N^{\sigma/2+\epsilon}} \frac{1}{p} \left(\left(\frac{N^\epsilon}{N}\right) + \frac{1}{N}\left(\frac{N^{\sigma/2+\epsilon}}{N^{1/2}}\right)N^\epsilon \right) \nonumber\\
           & \ll & \frac{N^\epsilon}{N}(N^\epsilon + N^{(\sigma-1)/2+\epsilon}) \ll N^{\frac{\sigma}{2}-\frac{3}{2}+\epsilon}, \eea while
\bea S^1_0 & = & N^{1/2}\sum_{p\neq N}^{N^{\sigma/2+\epsilon}} \frac{1}{p} \sum_{f \in H_k^*(N)} \omega_f^*(N) \lambda_f(p^2N)\lambda_f(1) \nonumber\\
           & \ll & N^{1/2}\sum_{p\neq N}^{N^{\sigma/2+\epsilon}} \frac{1}{p} \left(\frac{N^\epsilon}{N} + \frac{1}{N}\left(\frac{1}{N^{1/2}}\right)\left(\frac{N^{\sigma+\epsilon+1}}{N}\right)^{1/2}N^\epsilon \right) \nonumber\\
           & \ll & N^{-1/2+\epsilon} + N^{(\sigma/2)-1+\epsilon} \eea and
\bea S^1_2 & = & N^{1/2}\sum_{p\neq N}^{N^{\sigma/2+\epsilon}} \frac{1}{p} \sum_{f \in H_k^*(N)} \omega_f^*(N) \lambda_f(N)\lambda_f(1) \nonumber\\
           & \ll & N^{1/2+\epsilon} \left( \frac{N^\epsilon}{N} + \frac{1}{N} \left( \frac{1}{N^{1/2}} \right) N^\epsilon \right) \nonumber\\
           & \ll & N^{-1/2+\epsilon}. \eea
\end{proof}

We now analyze the piece from $\nu = 1$. The term in question is
\bea V_1 &\ :=\ & \sum_{a \in \{0,1\}} (i^k\mu(N)N^{1/2})^a \sum_{p \neq N} \hphi\left(\frac{\log p}{\log R}\right) p^{-1/2} \frac{\log p}{\log R} \sum_{f \in H_k^\pm(N)}\omega_f^*(N)\lambda_f(p)\lambda_f(N^a) \nonumber\\
         & = & \sum_{a \in \{0,1\}} (i^k\mu(N)N^{1/2})^a \sum_{p \neq N} \Delta_{k,N}(pN^a)\hphi\left(\frac{\log p}{\log R}\right) \frac{\log p}{\sqrt{p}\log R} \nonumber \\
         & := & \sum_{a \in \{0,1\}} P_k^a(\phi). \eea

\begin{lem} \label{lem:V1}
For $\supp(\hphi) \subseteq (-\sigma,\sigma)$, we have that
\be V_1\ =\ 2\lim_{\epsilon \downarrow 0} \int_{-\infty}^\infty \phi(x\log R) \chi(\epsilon + 4\pi ix) X_L\left(\foh + 2\pi ix\right)dx + O(N^{\sigma/2-1+\epsilon}). \ee
\end{lem}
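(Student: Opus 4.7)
The plan is to unfold $V_1 = P_k^0(\phi) + P_k^1(\phi)$ via the Bessel-Kloosterman form of the Petersson trace formula. Since $(p,N^a)=1$ for $p\neq N$, multiplicativity gives $\lambda_f(p)\lambda_f(N^a)=\lambda_f(pN^a)$, and as $pN^a>1$ the diagonal $\delta$-term vanishes, so
\[
\Delta_{k,N}(m) \;=\; 2\pi i^{-k}\sum_{c\geq 1}\frac{S(m,1;cN)}{cN}\,J_{k-1}\!\left(\frac{4\pi\sqrt{m}}{cN}\right)
\]
with $m\in\{p,pN\}$ for $a\in\{0,1\}$ respectively.

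First I would dispose of $P_k^0(\phi)$. Without an amplifying $N^{1/2}$ factor, the Weil bound $|S(p,1;cN)|\ll (cN)^{1/2+\epsilon}(p,cN)^{1/2}$ together with $J_{k-1}(x)\ll x^{k-1}$ for small $x$ (applicable since $\supp(\hphi)\subseteq(-\sigma,\sigma)$ restricts $p\ll N^{\sigma}$) yields, after summing on $c\geq 1$ and $p$, a total contribution of size $O(N^{\sigma/2-1+\epsilon})$, which is absorbed into the claimed error.

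The main work lies in $P_k^1(\phi)$, where the $N^{1/2}$ factor makes the $c=1$ Kloosterman sum produce the main contribution. Here I would use twisted multiplicativity of Kloosterman sums to factor $S(pN,1;cN)$: for $(c,N)=1$ one obtains $S(p,\overline{N};c)$ times a Ramanujan-type sum $S(0,\overline{c};N)$, the latter simplifying explicitly since $N$ is prime (giving essentially $\mu(N)=-1$). After interchanging the $p$- and $c$-sums and using the compact support of $\hphi$ to truncate $c$, the resulting Dirichlet series over $c$ coprime to $N$ assembles (as in Section~7 of \cite{ILS}) into precisely the Euler product defining $\chi$, evaluated at $\epsilon+4\pi ix$; contributions with $N^{\nu}\mid c$ for $\nu\geq 1$ are negligible because of additional cancellation in the Ramanujan structure and the smallness of the Bessel factor. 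The Bessel transform of the remaining prime sum, handled via Mellin inversion and the standard integral representation of $J_{k-1}$ in terms of ratios of Gamma functions, produces the factor $X_L(\foh+2\pi ix)$; the prefactor $i^k\mu(N)N^{1/2}$ pairs with the $i^{-k}$ in Petersson and with the $\mu(N)$ from the Ramanujan sum to yield the overall sign and the factor of~$2$.

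The principal obstacle is this identification step: showing that the Bessel-Kloosterman output assembles precisely into $\chi(\epsilon+4\pi ix)\,X_L(\foh+2\pi ix)$ and not merely an equivalent expression up to unspecified remainders. This extends the main-term analysis of \cite{ILS} by retaining \emph{all} lower-order contributions down to square-root cancellation and by exhibiting the Euler product $\chi$ explicitly on the spectral side. The $\lim_{\epsilon\downarrow 0}$ regularizes the pole of $\chi$ on the contour of integration arising from the $\zeta(1+\cdot)$ factor in the factorization of Lemma~\ref{lem:MilProd}; verifying that this limit exists and matches the principal-value prescription coming from the prime-sum side is the technical crux, and it is ultimately what justifies the title's assertion that the Ratios Conjecture ``knows'' about the off-diagonal Petersson contribution.
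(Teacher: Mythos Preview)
Your overall architecture is right and matches the paper's: split $V_1=P_k^0(\phi)+P_k^1(\phi)$, show $P_k^0$ is an error, and extract from $P_k^1$ the Bessel--Kloosterman main term that assembles into $\chi\cdot X_L$. The identification step for $P_k^1$ is also essentially what the paper does: it quotes Lemmas~6.5--6.6 of \cite{ILS} to evaluate the inner Kloosterman--prime sum $Q_k^1(1;c)$ with main term $\mu^2(b)/\varphi(b)$ (where $c=Nb$), then inserts the $b^{-\epsilon}$ regularizer to interchange sum and integral, applies the Mellin--Bessel identity (6.561.14) of \cite{GR} to produce the $\Gamma$-ratio in $X_L$, and recognizes the $b$-sum as $\chi_N$ (then $\chi$ at cost $O(N^{-1})$). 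Your sketch of this part is roughly right, if imprecise about the role of the $\delta(1,(N,c/N))$ factor.

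The genuine gap is your treatment of $P_k^0(\phi)$. The Weil bound together with $J_{k-1}(x)\ll x^{k-1}$ and trivial summation over $p$ does \emph{not} give $O(N^{\sigma/2-1+\epsilon})$ once $\sigma>1$. Carrying out your estimate for $k=2$ (the worst case), one gets
\[
P_k^0(\phi)\ \ll\ N^{1/2-k+\epsilon}\sum_{p\le N^{\sigma}} p^{(k-2)/2}\ \ll\ N^{\sigma-3/2+\epsilon},
\]
and $\sigma-3/2>\sigma/2-1$ precisely when $\sigma>1$; at $\sigma$ near $2$ your bound is $N^{1/2+\epsilon}$ versus the claimed $N^{\epsilon}$. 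What is missing is cancellation in the \emph{prime} sum, not in the Kloosterman sum: the paper (via \cite{ILS}, Lemmas~6.5--6.6) opens $S(1,p;c)$ into Dirichlet characters mod $c$ and then bounds $\sum_{p}\chi(p)\hphi(\cdots)$ using GRH for $L(s,\chi)$, which is exactly why the remark after the $Q_k^a$ formulas flags that these estimates are conditional on GRH for Dirichlet $L$-functions. That GRH input is also what controls the non-principal character error in $Q_k^1(1;c)$, so your $P_k^1$ error term inherits the same deficiency. Replace the Weil-only argument by an appeal to (or reproof of) those ILS lemmas and the rest of your outline goes through.
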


\begin{proof}
We begin by using the Petersson formula to estimate $\Delta_{k,N}(pN^a)$, noting, as before, that there are no diagonal terms. By equation (2.8) of \cite{ILS}, we have
\be \Delta_{k,N}(pN^a)\ =\ 2\pi i^k \sum_{c \equiv 0 (N)} \frac{S(1,pN^a;c)}{c}J_{k-1}\left(\frac{4\pi\sqrt{pN^a}}{c}\right), \ee
where $S(1,pN^a;c)$ represents the classical Kloosterman sum, and $J_{k-1}$ is the Bessel function.

Following \cite{ILS}, we now make the following definition:
\be Q_k^a(1;c)\ :=\ 2\pi i^k \sum_{p \neq N} S(1,pN^a;c)J_{k-1}\left(\frac{4\pi\sqrt{pN^a}}{c}\right) \hphi\left(\frac{\log p}{\log R}\right) \frac{\log p}{\sqrt{p}\log R}. \ee
With this definition, we see that \be P^a_k(\phi)\ =\ (i^k\mu(N)N^{1/2})^a\sum_{c \equiv 0(N)}\frac{Q_k^a(1;c)}{c}. \ee
We now follow the derivation in \cite{ILS}, which uses Lemmas 6.5 and 6.6 of \cite{ILS} to reexpress $Q_k^a(1;c)$. Noting their remarks about the error involved by evaluating the Kloosterman sums for large $c$ differently (see page 98 of \cite{ILS}), we get the following:

\be Q_k^0(1;c)\ =\ 2i^k \frac{c\mu(c)}{\varphi(c)\log R}\int_0^\infty J_{k-1}(y)\hphi\left(2\frac{cy/4\pi}{\log R}\right)dy + O(N^{\sigma/2+\epsilon}(\log2c)^{-2}), \ee
and
\bea Q_k^1(1;c) &\ =\ & 2i^k\delta(1,(N,c/N)) \frac{c\mu(N)\mu^2(c/N)}{\sqrt{N}\varphi(c/N)\log R} \nonumber \\
                &   & \cdot \int_0^\infty J_{k-1}(y)\hphi\left(\frac{2\log(cy/4\pi\sqrt{N})}{\log R}\right)dy \nonumber \\
                &   & +\ O(N^{(\sigma-1)/2+\epsilon}(\log 2c)^{-2}). \eea

\begin{rek} The derivation of the above estimates for the $Q_k^a(1;c)$ terms is conditional on GRH for the Riemann Zeta function and Dirichlet $L$-functions. \end{rek}

Note that in the expression for $Q_k^0(1;c)$ the main term is absorbed into the error term. Because of this, we have that
\be P_k^0(\phi)\ \ll\ \sum_{c \equiv 0(N)}\frac{N^{\sigma/2+\epsilon}}{c(\log 2c)^2}\ = \ N^{\sigma/2-1+\epsilon}\sum_{b=1}^\infty \frac{1}{b\log^2(bN)}\ \ll\ N^{\sigma/2-1+\epsilon}. \ee
Note that a similar analysis shows that the error from $Q_k^1(1;c)$ gives an error term of size $O(N^{\sigma/2-1+\epsilon})$ to $P_k^1(\phi)$. Thus, we have that
\bea P_k^1(\phi) &\ =\ & (i^k\mu(N)N^{1/2})\sum_{c \equiv 0(N)}\frac{Q_k^1(1;c)}{c} \nonumber \\
                 & = & (i^k\mu(N)N^{1/2})\sum_{(b,N) = 1}2i^k \frac{\mu(N)\mu^2(b)}{\sqrt{N}\varphi(b)} \nonumber \\
                 &   & \cdot \int_0^\infty J_{k-1}(y)\hphi\left(\frac{2\log(by\sqrt{N}/4\pi)}{\log R}\right)\frac{dy}{\log R} + O(N^{\sigma/2-1+\epsilon}) \nonumber \\
                 & = & 2\sum_{(b,N)=1} \frac{\mu^2(b)}{\varphi(b)}\int_0^\infty J_{k-1}(y)\hphi\left(\frac{2\log(by\sqrt{N}/4\pi)}{\log R}\right)\frac{dy}{\log R} \nonumber \\
                 &   & +\ O(N^{\sigma/2-1+\epsilon}). \label{ln:JPH}\eea
Recall that $\supp(\hphi) \subseteq (-\sigma,\sigma)$. We use this fact to show that the sum over $b$ from equation \eqref{ln:JPH} converges. By the bound $J_{k-1}(x) \ll x^{k-1}$ (from equation (2.11$''$) in \cite{ILS}), it is enough to show the convergence of the sum
\be \sum_{b=1}^\infty \int_{0}^\infty y^{k-1} \hphi\left(\frac{2 \log(by\sqrt{N}/4\pi)}{\log R}\right)dy. \ee
We note that the compact support of $\hphi$ allows us to truncate the integral at $4\pi R^{\sigma/2}/b$, so what we are considering is
\be \ll\ \sum_{b=1}^\infty \int_{0}^{4\pi R^{\sigma/2}/b} y^{k-1} dy\ \ll\ \frac{1}{b^k}. \ee
As $k \geq 2$ for us, we see that the decay in $b$ is enough to give us convergence.

We now introduce a factor that will aid us by allowing us to switch the integration and summation. We have that the expression from equation \ref{ln:JPH} is equal to
\bea & = & 2\lim_{\epsilon \da 0} \sum_{(b,N)=1} \frac{\mu^2(b)}{\varphi(b)b^\epsilon}\int_0^\infty J_{k-1}(y)\hphi\left(\frac{2\log(by\sqrt{N}/4\pi)}{\log R}\right)\frac{dy}{\log R} \nonumber \\
     &   & +\ O(N^{\sigma/2-1+\epsilon}). \label{ln:JPH2}\eea

Now, using the definition of $\hphi$, and the following formula ((6.561.14) in \cite{GR})
\be \int_0^\infty J_{k-1}(y)y^s dy \ = \ 2^s\Gamma\left(\frac{k+s}{2}\right)/\Gamma\left(\frac{k-s}{2}\right), \ee
we find that the expression from equation \eqref{ln:JPH2} is equal to
\be 2\lim_{\epsilon \da 0} \sum_{(b,N) = 1} \frac{\mu^2(b)}{\varphi(b)b^\epsilon} \int_{-\infty}^\infty \phi(x\log R)\left(\frac{2\pi}{b\sqrt{N}}\right)^{4\pi ix} \frac{\Gamma\left(\frac{k}{2}-2\pi ix\right)}{\Gamma\left(\frac{k}{2}+2\pi ix\right)}dx. \label{ln:INT}\ee

Note that the introduction of $b^\epsilon$ gives rise to the ultimate similarity between the piece we are currently evaluating and the $T_1$ term from the $L$-functions Ratios Conjecture's prediction. We now define
\be \chi_N(s)\ :=\ \sum_{(b,N) = 1} \frac{\mu^2(b)}{\varphi(b)b^s}\ =\ \prod_{p \neq N}\left(1+\frac{1}{(p-1)p^s}\right). \ee

\begin{rek}
As we are following the evaluation in \cite{ILS}, we note that a function $\chi$, which serves a purpose similar to that of $\chi_N$ for us, is introduced in their exposition. We note that there is a mistake in their definition of $\chi$ that results in certain equalities being incorrect. At least in the case of $N$ prime and $k$ fixed, however, the difference between these two functions is small enough that it does not alter the main term in their analysis, which was all that was considered in that paper.
\end{rek}

Now, for any fixed $\epsilon > 0$, we can switch integration and summation (due to Tonelli's theorem), and so the main term from equation \eqref{ln:JPH} is equal to
\be 2\lim_{\epsilon \downarrow 0} \int_{-\infty}^\infty \phi(x\log R) \chi_N(\epsilon + 4\pi ix) \left(\frac{\sqrt{N}}{2\pi}\right)^{-4\pi ix}\frac{\Gamma\left(\frac{k}{2}-2\pi ix\right)}{\Gamma\left(\frac{k}{2}+2\pi ix\right)}dx. \ee
Recalling the definition of $X_L$ in equation \ref{ln:XL}, we see that this is simply
\be 2\lim_{\epsilon \downarrow 0} \int_{-\infty}^\infty \phi(x\log R) \chi_N(\epsilon + 4\pi ix) X_L\left(\foh + 2\pi ix\right)dx. \ee
As the $p=N$ factor from $\chi$ (as defined in equation \ref{ln:chi}) is of size $1 + O(1/N)$, we can replace $\chi_N$ with $\chi$ while only introducing an error of size $O(N^{-1})$, completing the proof of the lemma.
\end{proof}

\begin{proof}[Proof of Theorem \ref{thm:sqrt}]
We combine \eqref{ln:EF}, Lemmas \ref{lem:V3}, \ref{lem:V2} and \ref{lem:V1}, and compare with Theorem \ref{thm:ratios} to deduce Theorem \ref{thm:sqrt}.
\end{proof}

%
%

\appendix


\section{Petersson Formula}\label{sec:PeterssonFormula}

Below we record several useful variants of the Petersson formula. We include these versions in this paper for completeness; the material below is taken from Appendix A of \cite{Mil5}. We define \be \Delta_{k,N}(m,n) \ = \ \sum_{f \in \mathcal{B}_k(N)} \omega_f(N) \lambda_f(m) \lambda_f(n). \ee We quote the following versions of the Petersson formula from \cite{ILS} (to match notations, note that $\sqrt{\omega_f(N)} \lambda_f(n) = \psi_f(n)$).

\begin{lem}[\cite{ILS}, Proposition 2.1]\label{lem:ils21petersson} We have \be \Delta_{k,N}(m,n) \ = \ \delta(m,n) + 2\pi i^k \sum_{c\equiv 0 \bmod N} \frac{S(m,n;c)}{c} J_{k-1}\left(\frac{4\pi\sqrt{mn}}{c}\right), \ee where $\delta(m,n)$ is the Kronecker symbol, \be S(m,n;c)\ =\ \sideset{}{^*}\sum_{d \bmod c} \exp\left(2\pi i\frac{md+n\overline{d}}{c}\right)\ee is the classical Kloosterman sum ($d\overline{d} \equiv 1 \bmod c$), and $J_{k-1}(x)$ is a Bessel function.
\end{lem}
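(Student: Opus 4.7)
The plan is to derive the Petersson trace formula via the classical method of Poincar\'e series. For each $m \ge 1$ I would introduce the weight-$k$, level-$N$ Poincar\'e series
\[ P_m(z) \ = \ \sum_{\gamma \in \Gamma_\infty \backslash \Gamma_0(N)} j(\gamma,z)^{-k}\, e(m \gamma z), \]
where $j(\gamma, z) = cz + d$ for $\gamma$ acting by M\"obius transformation and $e(w) = e^{2\pi i w}$. For $k \ge 2$ this series converges absolutely and lies in $S_k(N)$. The first key step is to establish the reproducing identity
\[ \langle f, P_m \rangle \ = \ \frac{\Gamma(k-1)}{(4\pi m)^{k-1}}\, a_f(m) \qquad \text{for all } f \in S_k(N), \]
which follows by the standard unfolding trick: one turns the integral over $\Gamma_0(N)\backslash\mathbb{H}$ into an integral over the strip $\Gamma_\infty\backslash\mathbb{H}$, and then recognizes the $y$-integral as $\int_0^\infty y^{k-2}e^{-4\pi m y}\,dy$.

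Next, I would compute $\langle P_m, P_n \rangle$ in two different ways. Expanding both factors in the orthogonal basis $\mathcal{B}_k(N)$ and applying Parseval together with the reproducing identity yields
\[ \langle P_m, P_n \rangle \ = \ \frac{\Gamma(k-1)^2}{(4\pi)^{2(k-1)}(mn)^{k-1}} \sum_{f \in \mathcal{B}_k(N)} \frac{\overline{a_f(m)}\, a_f(n)}{\langle f, f \rangle}, \]
which, after substituting the Hecke normalization $a_f(n) = n^{(k-1)/2}\lambda_f(n)$ and recalling that $\omega_f(N) = \Gamma(k-1)/[(4\pi)^{k-1}\langle f,f\rangle]$, is precisely a constant multiple of $\Delta_{k,N}(m,n)$.

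On the other hand, I would evaluate $\langle P_m, P_n \rangle$ directly by unfolding a second time. Choosing coset representatives for $\Gamma_\infty \backslash \Gamma_0(N)/\Gamma_\infty$ and separating according to whether $c=0$ or $c\neq 0$ (with $N\mid c$), the identity coset ($c=0$) contributes $\delta(m,n)$ via orthogonality of exponentials on $[0,1]$. For $c\neq 0$, summing over $d\pmod{c}$ with $(d,c)=1$ assembles the Kloosterman sum $S(m,n;c)$, while the remaining integral over the real line is a standard Bessel transform whose value is an explicit multiple of $J_{k-1}(4\pi\sqrt{mn}/c)$, with the factor $i^k$ arising from the branch of $(cz+d)^{-k}$. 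Matching the two evaluations produces the asserted identity. The main obstacle is the careful bookkeeping of the $i^k$ factor, the absolute convergence of the Kloosterman--Bessel double sum, and the precise constants linking $\langle f,f\rangle$ to $\omega_f(N)$; for these reasons one typically quotes this classical formula, as the authors do, from a standard reference such as \cite{IK} or \cite{ILS} rather than rederiving it.
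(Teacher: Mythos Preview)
Your sketch is the standard and correct derivation of the Petersson trace formula via Poincar\'e series, and you have the key steps in the right order: unfold once to get the reproducing property, expand $\langle P_m,P_n\rangle$ spectrally to recover $\Delta_{k,N}(m,n)$, and unfold a second time using the Bruhat decomposition of $\Gamma_\infty\backslash\Gamma_0(N)/\Gamma_\infty$ to produce the Kloosterman--Bessel series. One small caveat: the Poincar\'e series $P_m$ converges absolutely only for $k>2$, so for $k=2$ one would need Hecke's convergence trick or an analytic continuation argument; this is routine but should be flagged.

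That said, there is nothing to compare against: the paper does not prove this lemma at all. It is simply quoted verbatim from Proposition~2.1 of \cite{ILS} (and the surrounding material is lifted from Appendix~A of \cite{Mil5}), with no argument given. You correctly anticipated this in your final sentence. So your proposal is not so much a different route as the only route on offer; the authors treat the formula as a black box, and your outline would constitute an actual proof where the paper provides none.
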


We expect the main term to arise only in the case when $m=n$ (though as shown in \cite{HuMil,ILS}, the non-diagonal terms require a sophisticated analysis for test functions with sufficiently large support). We have the following estimates.

\begin{lem}[\cite{ILS}, Corollary 2.2]\label{lem:ilscor22} We have \be \Delta_{k,N}(m,n) \ = \ \delta(m,n) + O\left(\frac{\tau(N)}{Nk^{5/6}} \ \frac{(m,n,N) \tau_3((m,n))}{\sqrt{(m,N)+(n,N)}} \ \left(\frac{mn}{\sqrt{mn} + kN}\right)^{1/2} \log 2mn\right), \ee where $\tau_3(\ell)$ denotes the corresponding divisor function (which is the sum of the cubes of the divisors of $\ell$). \end{lem}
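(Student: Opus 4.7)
The plan is to bound the off-diagonal part of the Petersson formula from Lemma \ref{lem:ils21petersson}, namely
$$E(m,n) \ := \ 2\pi i^k \sum_{c \equiv 0 \bmod N} \frac{S(m,n;c)}{c}\, J_{k-1}\!\left(\frac{4\pi\sqrt{mn}}{c}\right),$$
by inserting Weil's bound on the Kloosterman sum and classical uniform bounds on the Bessel function. The argument naturally splits into two essentially independent pieces: estimating the arithmetic factor $S(m,n;c)/c$ under the constraint $N \mid c$, and estimating the analytic factor $J_{k-1}$ against the resulting $c$-sum.

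On the arithmetic side, Weil's bound gives $|S(m,n;c)| \le \tau(c)\, c^{1/2}\, (m,n,c)^{1/2}$. Writing $c = Nc'$, the $\tau(c)$ yields the $\tau(N)$ in the numerator (together with a $\tau(c')$ that is absorbed into the final $\log 2mn$ factor after the $c'$-sum). The factor $(m,n,Nc')^{1/2}$ I would handle by decomposing it into a contribution $(m,n,N)^{1/2}$ and a residual depending on $c'$; summing over divisors of $(m,n)$ produces the $\tau_3((m,n))$ term, and pairing with the other $(m,n,N)^{1/2}$ that comes from the extra cancellation below gives the $(m,n,N)$ in the numerator. The key refinement that produces the denominator $\sqrt{(m,N)+(n,N)}$ rather than the naive $(m,N)^{1/2}(n,N)^{1/2}/N^{1/2}$ should come from exploiting the Chinese Remainder Theorem factorization of $S(m,n;Nc')$ into residues mod $N$ and mod $c'$: the mod $N$ piece is a Ramanujan-type sum whose size can be estimated symmetrically in $m$ and $n$, giving an improvement over the direct Weil bound.

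On the analytic side, I would use the three-regime estimate $J_{k-1}(x) \ll \min\!\bigl((x/2)^{k-1}/\Gamma(k),\ k^{-1/3},\ x^{-1/2}\bigr)$ and split the $c'$-sum at the transition $c \asymp \sqrt{mn}/k$. For $c$ larger than the transition (small Bessel argument), the power-series bound dominates; for $c$ smaller (large argument), the oscillatory bound $x^{-1/2}$ controls the sum; in a neighborhood of the transition the uniform Airy-type bound $k^{-1/3}$ is used. A careful balance of these three regimes against the $c^{-1/2}$ from Weil and the $c^{-1}$ from the Petersson kernel should produce both the $(mn/(\sqrt{mn}+kN))^{1/2}$ factor and the overall $k^{-5/6}$ saving (one factor $k^{-1/3}$ from the transition bound and a further $k^{-1/2}$ from optimizing the split between the power-series and oscillatory regimes).

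The main obstacle I anticipate is obtaining the sharp symmetric denominator $\sqrt{(m,N)+(n,N)}$: the naive Weil bound is too weak, and one must exploit the factorization of $S(m,n;Nc')$ in a way that uses both $m$ and $n$ simultaneously, then symmetrize. The rest of the argument — assembling the Weil bound, the three-regime Bessel estimate, and the divisor sums — is routine, but must be executed carefully to track all divisor and log factors uniformly in $m$, $n$, $k$ and $N$.
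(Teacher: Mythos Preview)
The paper does not prove this lemma; it is quoted verbatim from \cite{ILS} (Corollary 2.2 there) as a known input, so there is no ``paper's own proof'' to compare against. Your sketch --- Weil's bound on $S(m,n;c)$, the three-regime Bessel estimate $J_{k-1}(x)\ll\min\bigl((x/2)^{k-1}/\Gamma(k),\,k^{-1/3},\,x^{-1/2}\bigr)$, and a split of the $c$-sum near $c\asymp\sqrt{mn}/k$ --- is indeed the backbone of the argument in \cite{ILS}, so at the level of strategy you are on the right track. That said, what you have written is an outline rather than a proof: the passages ``summing over divisors of $(m,n)$ produces the $\tau_3((m,n))$ term'' and ``a careful balance \ldots should produce both the $(mn/(\sqrt{mn}+kN))^{1/2}$ factor and the overall $k^{-5/6}$'' are assertions, not derivations, and the mechanism you propose for the $\sqrt{(m,N)+(n,N)}$ denominator (a Ramanujan-type saving in the mod-$N$ factor after CRT) is stated without being carried out. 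If you intend to supply a proof rather than a citation, those steps need to be written out explicitly; otherwise the appropriate course, as the paper does, is simply to cite \cite{ILS}.
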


We can significantly decrease the error term if $m$ and $n$ are small relative to $kN$.

\begin{lem}[\cite{ILS}, Corollary 2.3]\label{lem:ils23} If $12\pi\sqrt{mn} \le kN$ we have \be \Delta_{k,N}(m,n) \ = \ \delta(m,n) + O\left(\frac{\tau(N)}{2^k N^{3/2}} \ \frac{(m,n,N) \sqrt{mn}}{\sqrt{(m,N)+(n,N)}}\ \tau((m,n))\right). \ee
\end{lem}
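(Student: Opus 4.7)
The plan is to start from the exact form of the Petersson formula in Lemma~\ref{lem:ils21petersson}, so it suffices to bound the off-diagonal sum
$$\Sigma \ := \ \sum_{c \equiv 0 \bmod N} \frac{S(m,n;c)}{c}\, J_{k-1}\!\left(\frac{4\pi\sqrt{mn}}{c}\right).$$
The essential new feature over Corollary 2.2 is that under the hypothesis $12\pi\sqrt{mn} \le kN$, for every $c = bN$ with $b \ge 1$ the Bessel argument satisfies
$$x \ := \ \frac{4\pi\sqrt{mn}}{c} \ \le \ \frac{k}{3b} \ \le \ \frac{k}{3} \ < \ k-1,$$
so we are firmly in the small-argument regime where $J_{k-1}$ decays exponentially in $k$. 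The classical majorant $|J_{k-1}(x)| \le (x/2)^{k-1}/(k-1)!$, obtained by controlling the alternating power series term-by-term, is effective here: a Stirling estimate yields $(k/6)^{k-1}/(k-1)! \ll (e/6)^{k-1}/\sqrt{k} \ll 2^{-k}$, which is precisely the origin of the factor $2^{-k}$ in the target bound.

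Next I would insert Weil's bound $|S(m,n;c)| \le \tau(c)\,(m,n,c)^{1/2}\,c^{1/2}$, refined using that $N$ is prime. For $c = bN$ with $(b,N) = 1$, the Chinese Remainder Theorem factors $S(m,n;bN)$ into a twisted Kloosterman sum modulo $b$ times one modulo $N$; at the prime $N$ the mod-$N$ factor either collapses to a Ramanujan-type sum (when $N \mid (m,n)$, giving $\varphi(N)$) or obeys the sharper square-root Weil bound otherwise. This case analysis is what upgrades the crude $(m,n,N)^{1/2}$ produced by the plain Weil bound into the targeted $(m,n,N)/\sqrt{(m,N)+(n,N)}$, since the excess gcd contributes both an extra factor of $(m,n,N)^{1/2}$ and the compensating denominator $\sqrt{(m,N)+(n,N)}$. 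The case $N \mid b$ contributes a higher power of $N$ in the denominator and is negligible.

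Combining the two estimates gives
$$\Sigma \ \ll \ \frac{(2\pi\sqrt{mn})^{k-1}}{(k-1)!\, N^{k-1/2}} \sum_{b \ge 1} \frac{\tau(bN)\,(m,n,bN)^{1/2}}{b^{k-1/2}},$$
where the $b$-series converges geometrically for $k \ge 2$. Extracting one factor $\sqrt{mn}/N$ explicitly and bounding the remaining $(\sqrt{mn}/N)^{k-2}$ via the hypothesis by $(k/12\pi)^{k-2}$, then combining with the Stirling bound on $1/(k-1)!$, produces the factor $2^{-k} N^{-3/2}\sqrt{mn}$. The divisor estimates $\tau(bN) \le \tau(N)\tau(b)$ and $(m,n,bN)^{1/2} \le (m,n,N)^{1/2}(m,n,b)^{1/2}$, together with the refined Kloosterman accounting above, deliver the remaining arithmetic factor $\tau(N)\,\tau((m,n))\,(m,n,N)/\sqrt{(m,N)+(n,N)}$, finishing the proof.

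The main obstacle is not the Bessel analysis, which is straightforward once the hypothesis is invoked, but rather the careful CRT-based evaluation of $S(m,n;c)$ that produces the precise arithmetic denominator $\sqrt{(m,N)+(n,N)}$ in place of what Weil alone gives; this is precisely the step where primality of $N$ is used decisively, and where any analogue for composite level would require a more intricate Gauss-sum computation.
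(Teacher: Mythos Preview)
The paper does not give its own proof of this lemma; it is quoted verbatim from \cite{ILS} (their Corollary~2.3) in Appendix~\ref{sec:PeterssonFormula} as part of a list of Petersson formula variants, so there is no in-paper argument to compare your proposal against.

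That said, your outline is essentially the proof in \cite{ILS}: start from Lemma~\ref{lem:ils21petersson}, invoke the power-series bound $|J_{k-1}(x)|\le (x/2)^{k-1}/(k-1)!$ in the regime $x<k-1$ guaranteed by the hypothesis, apply Stirling to extract the $2^{-k}$, and insert the Weil bound on Kloosterman sums with careful gcd bookkeeping at primes dividing $N$. One correction: the lemma as stated (and as proved in \cite{ILS}) does \emph{not} assume $N$ prime. The arithmetic factor $(m,n,N)/\sqrt{(m,N)+(n,N)}$ already falls out of the general Weil--Kloosterman analysis there; primality of $N$ is a standing hypothesis of the present paper, used elsewhere to control oldforms, but it plays no role in this bound. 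So your closing remark that ``primality of $N$ is used decisively'' overstates what the argument requires.
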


In this paper we consider $N \to \infty$ through prime values. We must be careful. $\Delta_{k,N}(m,n)$ is defined as a sum over all cusp forms of weight $k$ and level $N$; in practice we often study the families $H_k^\sigma(N)$ of cuspidal newforms of weight $k$ and level $N$ (if $\sigma = +$ we mean the subset with even functional equation, if $\sigma = -$ we mean the subset with odd functional equation, and if $\sigma = \ast$ we mean all). Thus we should remove the contribution from the oldforms in our Petersson expansions. Fortunately this is quite easy if $N$ is prime, as then the only oldforms are those of level 1 (following \cite{ILS}, with additional work we should be able to handle $N$ square-free, though at the cost of worse error terms). We have (see (1.16) of \cite{ILS}) \be |H_k^\pm(N)| \ \sim \ \frac{k-1}{24} \ \varphi(N), \ee where $\varphi(N)$ is Euler's totient function (and thus equals $N-1$ for $N$ prime). The number of cusp forms of weight $k$ and level $1$ is (see (1.15) of \cite{ILS}) approximately $k/12$. As $\lambda_f(n) \ll \tau(n) \ll n^\gep$ and $\omega_f^\ast(N) \ll N^{-1+\gep}$, we immediately deduce

\begin{lem}\label{lem:Peterssonjustnewforms} Let $\mathcal{B}^{\rm new}_k(N)$ be a basis for $H_k^\ast(N)$ and let $\omega_f^\ast(N)$ be \be\label{eq:omegaastfN} \twocase{\omega_f^\ast(N) \ = \ }{\omega_f(1)}{if $N=1$}{\omega_f(N)/\omega(N)}{if $N>1$} \ee where $\omega(N) = \sum_f \omega_f(N)$. Note \be\label{eq:omegaastfNb} \sum_{f \in H_k^\ast(N)} \omega_f^\ast(N) \ = \ 1 \ = \ \left(1 + O\left(N^{-1+\gep}\right)\right) \sum_{f \in \mathcal{B}_k(N)} \omega_f(N). \ee For $N$ prime, we have \bea \sum_{f \in \mathcal{B}^{\rm new}_k(N)} \omega_f^\ast(N) \lambda_f(m)\lambda_f(n) & \ = \ & \Delta_{k,N}(m,n) + O\left(\frac{(mnN)^\gep k}{N}\right). \eea Substituting yields \bea & & \sum_{f \in \mathcal{B}^{\rm new}_k(N)} \omega_f^\ast(N) \lambda_f(m)\lambda_f(n) \ = \ \delta(m,n) + O\left(\frac{(mnN)^\gep k}{N}\right)  \nonumber\\ & & \ \ \ \ \ \ + \ O\left(\frac{\tau(N)}{Nk^{5/6}} \ \frac{(m,n,N) \tau_3((m,n))}{\sqrt{(m,N)+(n,N)}} \ \left(\frac{mn}{\sqrt{mn} + kN}\right)^{1/2} \log 2mn\right),\ \ \ \ \ \eea while if $12\pi\sqrt{mn} \le kN$ we have \bea & & \sum_{f \in \mathcal{B}^{\rm new}_k(N)} \omega_f^\ast(N) \lambda_f(m)\lambda_f(n) \ = \ \delta(m,n)  \nonumber\\ & & \ \ \ \ \ \ + \ O\left(\frac{\tau(N)}{2^k N^{3/2}} \ \frac{(m,n,N) \sqrt{mn}}{\sqrt{(m,N)+(n,N)}}\ \tau((m,n))\right) + O\left(\frac{(mnN)^\gep k}{N}\right). \ \ \ \ \eea
\end{lem}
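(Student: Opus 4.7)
The plan is to establish Lemma A.4 by showing that the full Petersson sum $\Delta_{k,N}(m,n)$ (taken over a basis $\mathcal{B}_k(N)$ of all cusp forms of weight $k$ and level $N$) differs from the newform-restricted sum by a negligible amount when $N$ is prime, and then to substitute the estimates already recorded in Lemmas A.1--A.3.

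First I would verify the identity \eqref{eq:omegaastfNb}. By construction $\omega_f^\ast(N) = \omega_f(N)/\omega(N)$ where $\omega(N) = \sum_{f \in \mathcal{B}_k(N)} \omega_f(N)$, so $\sum_{f \in H_k^\ast(N)} \omega_f^\ast(N) = 1$ essentially by definition once we know that $H_k^\ast(N) \subset \mathcal{B}_k(N)$ and that the contribution from oldforms to $\omega(N)$ is small. The quantitative form follows from the fact that, for $N$ prime, the only oldforms in $\mathcal{B}_k(N)$ are those coming from level $1$, of which there are $O(k)$; each such oldform carries a weight $\omega_f(N) \ll N^{-1+\epsilon}$ by the standard bound, giving a total oldform contribution of size $O(N^{-1+\epsilon})$ relative to the total mass.

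The main step is the passage from $\mathcal{B}_k(N)$ to $\mathcal{B}_k^{\rm new}(N)$. Write
\[
\sum_{f \in \mathcal{B}_k^{\rm new}(N)} \omega_f^\ast(N)\lambda_f(m)\lambda_f(n)
\; = \; \Delta_{k,N}(m,n) \;-\; \sum_{f \in \mathcal{B}_k(N) \setminus H_k^\ast(N)} \omega_f(N)\lambda_f(m)\lambda_f(n) \;+\; E,
\]
where $E$ accounts for the normalization discrepancy between $\omega_f(N)$ and $\omega_f^\ast(N) = \omega_f(N)/\omega(N)$. For $N$ prime the first omitted sum is over the finitely many (namely $O(k)$) level $1$ oldforms, and by $|\lambda_f(n)| \ll n^\epsilon$ together with $\omega_f(N) \ll N^{-1+\epsilon}$ each term is $O((mn)^\epsilon N^{-1+\epsilon})$. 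The normalization error $E$ is handled by \eqref{eq:omegaastfNb} and is of the same order. Combining these gives the error $O((mnN)^\epsilon k/N)$ stated in the lemma.

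With this reduction in hand, the two substitution statements are immediate: one simply inserts the estimate for $\Delta_{k,N}(m,n)$ from Lemma A.2 (for the general case) or from Lemma A.3 (under the restriction $12\pi\sqrt{mn} \le kN$), and adds the newform-passage error $O((mnN)^\epsilon k/N)$ to the existing error term. No cancellation between the two error sources is needed or claimed. The main obstacle is therefore almost entirely notational: keeping careful track of the normalization $\omega_f \mapsto \omega_f^\ast$ and verifying that for $N$ prime there really are no oldforms other than those lifted trivially from level $1$, so that the oldform contribution admits the clean bound above. The bookkeeping would get genuinely harder for $N$ squarefree (as noted in the text), but the prime case is direct.
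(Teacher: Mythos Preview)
Your proposal is correct and follows exactly the approach the paper takes: the paper's proof is the single sentence ``The proof follows by using equations \eqref{eq:omegaastfN} and \eqref{eq:omegaastfNb} in the Petersson lemmas,'' and you have simply spelled out the details of that substitution (removing the $O(k)$ level-$1$ oldforms, accounting for the $\omega_f\mapsto\omega_f^\ast$ renormalization, and then inserting Lemmas A.2 and A.3). There is no difference in strategy, only in the level of detail.
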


\begin{proof} The proof follows by using equations \eqref{eq:omegaastfN} and \eqref{eq:omegaastfNb} in the Petersson lemmas. \end{proof}


\section{Fourier transform bound}

\begin{lem}\label{lem:decayphi}
Let $\phi$ be an even Schwartz function such that ${\rm supp}(\hphi)
\subset (-\sigma,\sigma)$. Then \be \phi(t+iy) \ \ll_{n,\phi} \ e^{2\pi |y|
\sigma} \cdot (t^2 + y^2)^{-n}. \ee
\end{lem}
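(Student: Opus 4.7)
The plan is to use Fourier inversion together with the Paley--Wiener idea: compact support of $\hphi$ forces $\phi$ to extend to an entire function of exponential type, while smoothness of $\hphi$ gives the polynomial decay.

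First I would write, for any complex $z = t + iy$,
\[
\phi(z) \ = \ \int_{-\sigma}^{\sigma} \hphi(\xi)\, e^{2\pi i z \xi}\, d\xi,
\]
which is legitimate since $\phi$ is Schwartz and $\supp(\hphi) \subset (-\sigma,\sigma)$. The trivial bound $|e^{2\pi i z \xi}| = e^{-2\pi y \xi} \le e^{2\pi|y|\sigma}$ over the interval of integration immediately yields $|\phi(t+iy)| \ll_\phi e^{2\pi|y|\sigma}$, which handles the exponential factor but gives no decay in $t$ or $y$.

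To obtain the $(t^2+y^2)^{-n}$ decay, I would exploit the identity $z\, e^{2\pi i z \xi} = \frac{1}{2\pi i}\frac{d}{d\xi} e^{2\pi i z \xi}$ and integrate by parts $2n$ times. Because $\hphi$ is Schwartz with support contained in the closed interval $[-\sigma,\sigma]$, $\hphi$ and all of its derivatives vanish at $\pm\sigma$, so all boundary terms drop and one obtains
\[
z^{2n} \phi(z) \ = \ \frac{(-1)^{2n}}{(2\pi i)^{2n}} \int_{-\sigma}^{\sigma} \hphi^{(2n)}(\xi)\, e^{2\pi i z \xi}\, d\xi.
\]
Taking absolute values and using the same pointwise estimate $|e^{2\pi i z \xi}| \le e^{2\pi |y|\sigma}$ gives
\[
|z|^{2n}\, |\phi(z)| \ \le \ \frac{e^{2\pi|y|\sigma}}{(2\pi)^{2n}} \int_{-\sigma}^{\sigma} |\hphi^{(2n)}(\xi)|\, d\xi \ \ll_{n,\phi} \ e^{2\pi|y|\sigma}.
\]
Since $|z|^{2n} = (t^2+y^2)^n$, dividing through yields the claimed bound $\phi(t+iy) \ll_{n,\phi} e^{2\pi|y|\sigma}(t^2+y^2)^{-n}$.

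There is no real obstacle here; the only subtlety to note is that the integration by parts must not produce boundary contributions, which is where the compact support of $\hphi$ (together with the Schwartz regularity of $\hphi$ at the endpoints of its support) is used essentially. The implicit constant depends on $n$ and on $\phi$ only through $\|\hphi^{(2n)}\|_{L^1(-\sigma,\sigma)}$, which is finite because $\hphi$ is Schwartz.
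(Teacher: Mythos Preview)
Your proof is correct and follows essentially the same argument as the paper: Fourier inversion, integration by parts $2n$ times using the compact support of $\hphi$ to kill boundary terms, and then the trivial estimate $|e^{2\pi i z\xi}|\le e^{2\pi|y|\sigma}$ on the remaining integral. You have simply supplied more detail (the explicit identity used for integration by parts and the observation that the implicit constant is governed by $\|\hphi^{(2n)}\|_{L^1}$) than the paper's three-line version.
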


\begin{proof} From the Fourier inversion formula, integrating by parts and the compact support of $\hphi$,
we have \bea \phi(t+iy) & \ = \ & \int_{-\infty}^\infty \hphi(\xi)
e^{2\pi i (t+iy)\xi} d\xi \nonumber\\ &=& \int_{-\infty}^\infty
\hphi^{(2n)}(\xi) \cdot (2\pi i (t+iy))^{-2n} e^{2\pi i
(t-iy)\xi}d\xi \nonumber\\ & \ll & e^{2\pi |y| \sigma}
(t^2+y^2)^{-n}. \eea
\end{proof}


\section{Terms involving the sign of the functional equation}\label{ap:SFE}

In this section, we treat the terms from \eqref{eq:rcpeqtwoeight} involving the sign of the functional equation. In particular, we will show that, following the other steps of the Ratios Conjecture, these terms are predicted to be quite small. Because of the nature of these terms' dependence on $N$, through a careful analysis similar to that in \cite{Mil5} (where the first of the following sums is treated -- see Remark 1.8 in \cite{Mil5}), it can be shown that the final contribution of these terms to the predicted 1-level density is of size $O(1/N)$, and so is much smaller than we could hope to detect. Rather than performing the detailed analysis as in \cite{Mil5}, we show that the only $N$-dependence in the sum is a factor of size $1/N$, which essentially implies that any contributions from this term will be of size $O(1/N)$.

We consider \bea
& & \sum_{f \in H_k^*(N)} \epsilon_f \omega_f^* (N) \sum_{h = 1}^\infty \frac{\mu_f (h)}{h^{\foh + \gamma}} X_L \left(\foh + \alpha\right) \sum_{n \le y} \frac{{\lambda}_f (n)}{n^{\foh - \alpha}}\nonumber \\
& \pm & \sum_{f \in H_k^*(N)} \epsilon_f \omega_f^* (N) \sum_{h = 1}^\infty \frac{\mu_f (h)}{h^{\foh + \gamma}} \sum_{m\le x} \frac{\lambda_f (m)}{m^{\foh + \alpha}},
\eea
where $\epsilon_f = i^k\mu(N)\lambda_f(N)\sqrt{N}.$ We will only analyze the second of the two sums, as the first is analyzed in detail in \cite{Mil5}. The analysis is similar to that contained in the proof of Theorem \ref{thm:RCP}, with a few key differences.

After replacing $\epsilon_f$ with the expression for the sign of the functional equation, we have that the second sum is \be \pm i^k\mu(N)\sqrt{N} \sum_{f \in H_k^*(N)} \omega_f^* (N) \sum_{h = 1}^\infty \frac{\mu_f (h)}{h^{\foh + \gamma}} \sum_{m\le x} \frac{\lambda_f(N) \lambda_f (m)}{m^{\foh + \alpha}}. \ee The fact that there is a $\lambda_f(N)$ in this expression is what will cause it to be small, since in following the Ratios Conjecture's recipe, we will drop the nondiagonal terms (i.e. those without a second factor of $\lambda_f(N)$).

Note that in the $m$ sum, $m$ is bounded by $x$. In the approximate functional equation, we take $x = y \sim \sqrt{N}$, and so we can conclude that $N \notdiv m$. So, as we consider only the diagonal terms, for the sum over $f \in H_k^*(N)$ to contribute a main term to the prediction, there must be another factor of $\lambda_f(N)$ arising from $\mu_f(h)$.

We now rewrite the sum as a product. Note that $\mu_f(h)$ can be defined by multiplicativity, with $\mu_f(1) = 1$, $\mu_f(p) = -\lambda_f(p)$, $\mu_f(p^2) = \chi_0(p)$ (where $\chi_0$ is the principal character to the modulus $N$), and for higher $n$, $\mu_f(p^n) = 0$. So, if there is to be any contribution from a given $h$, it must be cubefree, and in order to contribute a diagonal term, we must have $N || h$. As $N || h$, the factor for the prime $p = N$ will be $-\frac{\mu_f(N)}{N^{\foh+\alpha}}$. For a prime $p \leq x$, we have $(p,N) = 1$, so the effect of the prime could be any of  $1, \mu_f(p)\lambda_f(p)$, or $\mu_f(p^2)\lambda_f(1)$ (depending on the power of $p$ that divides $h$), so we can write the $p$ factor as $\left(1-\frac{\lambda_f(p)^2}{p^{1+\alpha+\gamma}}+\frac{1}{p^{1+2\gamma}}\right)$. As primes greater than $x$ can only arise through the $h$ sum, their factors will just have the contribution of $1$ or $\mu_f(p^2)\lambda_f(1)$, as $\mu_f(p)$ does not give a diagonal term (note that we are ignoring $p = N$, as that factor has already been determined). So the factors from $p > x$, with $p \neq N$ will be $\left(1+\frac{1}{p^{1+2\gamma}}\right)$.

So, we have just converted the sum to the product \be i^k \mu(N) \lambda_f(N) \sqrt{N} \frac{-\lambda_f(N)}{N^{\foh+\gamma}}\prod_{p \leq x} \left(1-\frac{\lambda_f(p)^2}{p^{1+\alpha+\gamma}}+\frac{1}{p^{1+2\gamma}}\right) \cdot \prod_{\substack{p > x \\ p \neq N}}\left(1+\frac{1}{p^{1+2\gamma}}\right). \ee We now replace the $\lambda_f(N)^2$ with its value, $1/N$ (as $N$ is the level of the modular form). This allows us to use the Petersson formula on the remaining terms, as they are relatively prime to $N$. We thus execute the sum over $f \in H_k^*(N)$, replacing the $\lambda_f(p)^2$ with the diagonal contribution, 1. By then extending the product over $x$ to infinity, and noting that $N$ is prime, we get \be i^k  \frac{1}{N^{1+\gamma}}\prod_{p} \left(1-\frac{1}{p^{1+\alpha+\gamma}}+\frac{1}{p^{1+2\gamma}}\right). \ee Note that the \emph{only} $N$ dependence in this product is in the $\frac{1}{N^{1+\gamma}}$ term, and as we consider only $\gamma$ with $\Re(\gamma) \geq 0$, any contribution from this factor will be of size $O(1/N)$.


\section{Taylor Coefficient of $M(t/\log R)$} \label{ap:TC}
To complete the analysis of $T_1$, we need to determine the value of the linear Taylor coefficient of $M(\frac{t}{\log R})$.

\begin{lem} The linear Taylor coefficient of $M(\frac{t}{\log R})$, as defined in equation \eqref{ln:M}, is equal to \be m_1\ =\ -4\pi i \sum_p \frac{\log p}{p(p+1)} -8 \pi i \frac{\zeta'(2)}{\zeta(2)}-4\pi i\frac{\Gamma'}{\Gamma}\left(\frac{k}{2}\right). \ee \end{lem}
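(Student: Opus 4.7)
The plan is to use logarithmic differentiation. Writing $v = t/\log R$, the function $M(v)$ is a product of three factors, each of which evaluates to $1$ at $v=0$, so $M(0) = 1$ and hence $m_1 = M'(0) = (\log M)'(0)$ equals the sum of the logarithmic derivatives at $0$ of the three factors. Denote these factors by
\[ F_1(v) = \frac{\zeta(2)}{\zeta(2+8\pi iv)}, \qquad F_2(v) = \frac{\Gamma(k/2 - 2\pi iv)}{\Gamma(k/2 + 2\pi iv)}, \qquad F_3(v) = \prod_p\left(1 - \frac{p^{4\pi iv}-1}{p(p^{1+4\pi iv}+1)}\right). \]

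For $F_1$, straightforward logarithmic differentiation gives $(\log F_1)'(v) = -8\pi i\,\zeta'(2+8\pi iv)/\zeta(2+8\pi iv)$, so $(\log F_1)'(0) = -8\pi i\,\zeta'(2)/\zeta(2)$. For $F_2$, with $\psi := \Gamma'/\Gamma$, the chain rule yields $(\log F_2)'(v) = -2\pi i\,\psi(k/2 - 2\pi iv) - 2\pi i\,\psi(k/2 + 2\pi iv)$, hence $(\log F_2)'(0) = -4\pi i\,\Gamma'(k/2)/\Gamma(k/2)$.

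For the Euler product, I would write each local factor as $f_p(v) = 1 - g_p(v)$ with $g_p(v) = (p^{4\pi iv} - 1)/(p(p^{1+4\pi iv}+1))$. Since the numerator of $g_p$ vanishes at $v=0$, the quotient rule immediately gives $g_p'(0) = 4\pi i(\log p)/(p(p+1))$, and because $f_p(0) = 1$ we obtain $(\log f_p)'(0) = -4\pi i(\log p)/(p(p+1))$. Summing gives $(\log F_3)'(0) = -4\pi i \sum_p (\log p)/(p(p+1))$, a sum that converges absolutely since $(\log p)/(p(p+1)) \ll (\log p)/p^2$. Termwise differentiation of the Euler product is justified by its local uniform convergence in a neighborhood of $v=0$ (each factor is $1 + O((\log p)/p^2)$ there). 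Adding the three contributions yields the claimed expression for $m_1$. The only mild technical step is justifying the interchange of sum and derivative in $F_3$, which is a routine absolute-convergence argument, so no genuine obstacle arises.
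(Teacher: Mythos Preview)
Your proof is correct and follows essentially the same approach as the paper: both observe that each factor of $M$ equals $1$ at $0$, so $m_1$ is the sum of the derivatives of the three factors at $0$, and both use logarithmic differentiation on the Euler product. The only cosmetic difference is that you apply logarithmic differentiation uniformly to all three factors (and add a brief justification for termwise differentiation of the product), whereas the paper uses the quotient rule directly on the $\Gamma$ ratio; the computations and results are identical.
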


In order to calculate this, we Taylor expand all the factors of $M$ in the variable $X = \frac{t}{\log R}$, where
\be M(X)\ =\ \left(\prod_p 1-\frac{p^{4\pi i X}-1}{p(p^{1+4\pi i X}+1)}\right)\left(\frac{\zeta(2)}{\zeta(2+8\pi iX)}\right)\left(\frac{\Gamma\left(-2\pi i X + \frac{k}{2}\right)}{\Gamma\left(2\pi i X + \frac{k}{2}\right)}\right). \ee
Clearly each factor (the product over primes, the $\zeta$ ratio, and the $\Gamma$ ratio) has constant term 1 in its Taylor expansion around $X = 0$, and so $m_1$ is just the sum of the linear coefficients of each of the factors. To determine these, we simply take the derivative of each factor at $X=0$. The product over primes has derivative:
\bea & & \frac{d}{dX} \prod_p \left(1 - \frac{p^{4 \pi iX}-1}{p(p^{1+4\pi iX}+1)}\right) \Big |_{X=0} \nonumber \\
     & =\ & \prod_p \left(1 - \frac{p^{4 \pi iX}-1}{p(p^{1+4\pi iX}+1)}\right) \cdot \log'\left(\prod_p \left(1 - \frac{p^{4 \pi iX}-1}{p(p^{1+4\pi iX}+1)}\right)\right) \Big |_{X=0} \nonumber \\
     & =\ &1 \cdot \sum_p \log' \left(1 - \frac{p^{4 \pi iX}-1}{p(p^{1+4\pi iX}+1)}\right) \Big |_{X=0} \nonumber \\
     & =\ & -4\pi i \sum_p \frac{\log p}{p(p+1)}. \eea

Next, the $\zeta$ ratio has derivative \be \frac{d}{dX} \frac{\zeta(2)}{\zeta(2+8\pi i X)} \Big |_{X=0}\ =\ -8\pi i \frac{\zeta'(2)}{\zeta(2)}. \ee

Finally, the $\Gamma$ ratio ($\mathfrak{G}(X)$) has derivative
\bea & & \frac{d}{dX} \frac{\Gamma\left(-2\pi i X + \frac{k}{2}\right)}{\Gamma\left(2\pi i X + \frac{k}{2}\right)} \Big |_{X=0} \nonumber \\
     & = \ & \frac{\Gamma\left(2\pi i X + \frac{k}{2}\right)\left(-2\pi i\Gamma'\left(-2\pi i X + \frac{k}{2}\right)\right)}{\Gamma\left(2\pi i X + \frac{k}{2}\right)^2} \nonumber \\
     &
       \ & - \frac{\Gamma\left(-2\pi i X + \frac{k}{2}\right)\left(2\pi i \Gamma'\left(2\pi i X + \frac{k}{2}\right)\right)}{\Gamma\left(2\pi i X + \frac{k}{2}\right)^2} \Big |_{X = 0} \nonumber \\
     & =\ & -4\pi i \frac{\Gamma'\left(\frac{k}{2}\right)}{\Gamma\left(\frac{k}{2}\right)}, \eea
giving the lemma.

\ \\


\begin{thebibliography}{DHKMS11} 

\bibitem[BBLM]{BBLM}
\newblock E. Bogomolny, O. Bohigas, P. Leboeuf and A. G. Monastra,
\emph{On the spacing distribution of the Riemann zeros: corrections
to the asymptotic result}, Journal of Physics A: Mathematical and
General \textbf{39} (2006), no. 34, 10743--10754.

\bibitem[CFKRS]{CFKRS}
\newblock B. Conrey, D. Farmer, P. Keating, M. Rubinstein and N.
Snaith, \emph{Integral moments of $L$-functions}, Proc. London Math.
Soc. (3)  \textbf{91} (2005),  no. 1, 33--104.


\bibitem[CFZ1]{CFZ1}
\newblock J. B. Conrey, D. W. Farmer and M. R. Zirnbauer, \emph{Autocorrelation of ratios
of $L$-functions},  Commun. Number Theory Phys.  \textbf{2}  (2008),  no. 3, 593--636.

\bibitem[CFZ2]{CFZ2}
\newblock J. B. Conrey, D. W. Farmer and M. R. Zirnbauer, \emph{Howe pairs, supersymmetry,
and ratios of random characteristic polynomials for the classical
compact groups}, preprint.
\texttt{http://arxiv.org/abs/math-ph/0511024}

\bibitem[CS1]{CS1}
\newblock J. B. Conrey and N. C. Snaith, \emph{Applications of the
$L$-functions Ratios Conjecture},  Proc. Lon. Math. Soc. \textbf{93} (2007), no 3, 594--646.

\bibitem[CS2]{CS2}
\newblock J. B. Conrey and N. C. Snaith, \emph{Triple correlation of
the Riemann zeros},  J. Th\'eor. Nombres Bordeaux  \textbf{20}  (2008),  no. 1, 61--106.

\bibitem[Da]{Da}
\newblock H. Davenport, \emph{Multiplicative Number Theory, $2$nd edition},
 Graduate Texts in Mathematics \textbf{74}, Springer-Verlag, New York,
 $1980$, revised by H. Montgomery.

\bibitem[DM1]{DM1}
\newblock E. Due\~nez and S. J. Miller, \emph{The low lying zeros of a
$\text{GL}(4)$ and a $\text{GL}(6)$ family of $L$-functions},
Compositio Mathematica \textbf{142} (2006), no. 6, 1403--1425.

\bibitem[DM2]{DM2}
\newblock E. Due\~nez and S. J. Miller, \emph{The effect of
convolving families of $L$-functions on the underlying group
symmetries},  Proceedings of the London Mathematical Society (2009); doi: 10.1112/plms/pdp018.


\bibitem[FI]{FI}
\newblock E. Fouvry and H. Iwaniec, \emph{Low-lying zeros of dihedral
$L$-functions}, Duke Math. J.  \textbf{116} (2003),  no. 2, 189-217.

\bibitem[Gao]{Gao}
\newblock P. Gao, \emph{$N$-level density of the low-lying zeros of
quadratic Dirichlet $L$-functions}, Ph.~D thesis, University of
Michigan, 2005.

\bibitem[GJMMNPP]{GJMMNPP}
\newblock J. Goes, S. Jackson, S. J. Miller, D. Montague, K. Ninsuwan, R. Peckner and T. Pham, \emph{A unitary test of the Ratios Conjecture}, preprint.

\bibitem[GHK]{GHK}
\newblock S. M. Gonek, C. P. Hughes and J. P. Keating, \emph{A Hybrid Euler-Hadamard product formula for
the Riemann zeta function}, Duke Math. J. \textbf{136} (2007) 507-549.

\bibitem[GR]{GR}
\newblock I. S. Gradshteyn and I. M. Ryzhik, \emph{Table of Integrals,
Series, and Products}, New York, Academic Press, 1965.

\bibitem[G\"u]{Gu}
\newblock A. G\"ulo\u{g}lu, \emph{Low-Lying Zeros of Symmetric
Power $L$-Functions}, Internat. Math. Res. Notices 2005, no. 9,
517-550.

\bibitem[Hej]{Hej}
\newblock D. Hejhal, \emph{On the triple correlation of zeros of
the zeta function}, Internat. Math. Res. Notices 1994, no. 7,
294-302.

\bibitem[HL]{HL}
\newblock J. Hoffstein and P. Lockhart, \emph{Coefficients of Maass forms and the
Siegel zero. With an appendix by Dorian Goldfeld, Hoffstein and
Daniel Lieman}, Ann. of Math. (2)  \textbf{140}  (1994),  no. 1,
161--181.


\bibitem[HuMil]{HuMil}
\newblock C. Hughes and S. J. Miller, \emph{Low-lying zeros of $L$-functions
with orthogonal symmtry}, Duke Math. J., \textbf{136}
(2007), no. 1, 115--172.

\bibitem[HR]{HR}
\newblock C. Hughes and Z. Rudnick, \emph{Linear Statistics of
Low-Lying Zeros of $L$-functions},  Quart. J. Math. Oxford
\textbf{54} (2003), 309--333.

\bibitem[HuyMil]{HuyMil}
\newblock D. K. Huynh and S. J. Miller, \emph{An elliptic curve family test of the Ratios Conjecture}, preprint.

\bibitem[Iw]{Iw}
\newblock H. Iwaniec, \emph{Small eigenvalues of Laplacian for $\Gamma_0(N)$},
Acta Arith. \textbf{56}  (1990),  no. 1, 65--82.

\bibitem[IK]{IK}
H. Iwaniec and E. Kowalski, \emph{Analytic Number Theory}, AMS
Colloquium Publications, Vol.
textbf{53}, AMS, Providence, RI, 2004.

\bibitem[ILS]{ILS}
\newblock H. Iwaniec, W. Luo and P. Sarnak, \emph{Low lying zeros of
families of $L$-functions}, Inst. Hautes Etudes Sci. Publ. Math.
\textbf{91}, 2000, 55--131.

\bibitem[KaSa1]{KaSa1}
\newblock N.~Katz and P.~Sarnak, \emph{Random Matrices, Frobenius
Eigenvalues and Monodromy}, AMS Colloquium Publications \textbf{45},
AMS, Providence, $1999$.

\bibitem[KaSa2]{KaSa2}
\newblock N.~Katz and P.~Sarnak, \emph{Zeros of zeta functions and symmetries},
Bull. AMS \textbf{36}, $1999$, $1-26$.

\bibitem[KeSn1]{KeSn1}
\newblock J. P. Keating and N. C. Snaith, \emph{Random matrix theory
and $\zeta(1/2+it)$},  Comm. Math. Phys.  \textbf{214} (2000),  no.
1, 57--89.

\bibitem[KeSn2]{KeSn2}
\newblock J. P. Keating and N. C. Snaith, \emph{Random matrix theory and $L$-functions at $s=1/2$},
Comm. Math. Phys.  \textbf{214}  (2000),  no. 1, 91--110.

\bibitem[KeSn3]{KeSn3}
\newblock J. P. Keating and N. C. Snaith, \emph{Random
matrices and $L$-functions}, Random matrix theory, J. Phys. A
\textbf{36} (2003), no. 12, 2859--2881.

\bibitem[Mil1]{Mil1}
\newblock S. J. Miller, \emph{$1$- and $2$-level densities for families of elliptic
curves: evidence for the underlying group symmetries}, Compositio
Mathematica \textbf{104} (2004), 952--992.

\bibitem[Mil2]{Mil2}
\newblock S. J. Miller, \emph{Variation in the number of points on elliptic curves and
applications to excess rank}, C. R. Math. Rep. Acad. Sci. Canada
\textbf{27} (2005), no. 4, 111--120.


\bibitem[Mil3]{Mil3}
\newblock S. J. Miller, \emph{A symplectic test of the $L$-Functions Ratios Conjecture}, Int Math Res Notices (2008) Vol. 2008, article ID rnm146, 36 pages, doi:10.1093/imrn/rnm146.

\bibitem[Mil4]{Mil4}
\newblock S. J. Miller, \emph{Lower
order terms in the $1$-level density for families of holomorphic
cuspidal newforms}, Acta Arithmetica \textbf{137} (2009), 51--98.

\bibitem[Mil5]{Mil5}
\newblock S. J. Miller, \emph{An orthogonal test of the $L$-Functions Ratios Conjecture}, Proceedings of the London Mathematical Society 2009, doi:10.1112/plms/pdp009.

\bibitem[MilPe]{MilPe}
\newblock S. J. Miller and R. Peckner, \emph{Low-lying zeros of number field $L$-functions}, preprint.

\bibitem[Mon]{Mon}
\newblock H. Montgomery, \emph{The pair correlation of zeros of the zeta
function}, Analytic Number Theory, Proc. Sympos. Pure Math.
\textbf{24}, Amer. Math. Soc., Providence, $1973$, $181-193$.

\bibitem[Od1]{Od1}
\newblock A. Odlyzko, \emph{On the distribution of spacings
between zeros of the zeta function}, Math. Comp. \textbf{48} (1987),
no. 177, 273--308.

\bibitem[Od2]{Od2}
\newblock A. Odlyzko, \emph{The $10^{22}$-nd zero of the Riemann zeta function}, Proc.
Conference on Dynamical, Spectral and Arithmetic Zeta-Functions, M.
van Frankenhuysen and M. L. Lapidus, eds., Amer. Math. Soc.,
Contemporary Math. series, 2001,
\texttt{http://www.research.att.com/$\sim$amo/doc/zeta.html}.

\bibitem[OS1]{OS1}
\newblock A. E. \"Ozl\"uk and C. Snyder, \emph{Small zeros of quadratic $L$-functions},
Bull. Austral. Math. Soc. \textbf{47} (1993), no. 2, 307--319.

\bibitem[OS2]{OS2}
\newblock A. E. \"Ozl\"uk and C. Snyder, \emph{On the distribution of the
nontrivial zeros of quadratic $L$-functions close to the real axis},
Acta Arith. \textbf{91} (1999), no. 3, 209--228.

\bibitem[RR]{RR}
\newblock G. Ricotta and E. Royer, \emph{Statistics for low-lying
zeros of symmetric power $L$-functions in the level aspect},
preprint. \texttt{http://arxiv.org/abs/math/0703760}

\bibitem[Ro]{Ro}
\newblock E. Royer, \emph{Petits z\'{e}ros de fonctions $L$
de formes modulaires}, Acta Arith. \textbf{99} (2001),  no. 2,
147-172.

\bibitem[Rub1]{Rub1}
\newblock M. Rubinstein, \emph{Low-lying zeros of $L$--functions
and random matrix theory}, Duke Math. J. \textbf{109}, (2001),
147--181.

\bibitem[Rub2]{Rub2}
\newblock M. Rubinstein, \emph{Computational methods and experiments
in analytic number theory}. Pages 407--483 in Recent Perspectives in
Random Matrix Theory and Number Theory, ed. F. Mezzadri and N. C.
Snaith editors, 2005.

\bibitem[RS]{RS}
\newblock Z. Rudnick and P. Sarnak, \emph{Zeros of principal $L$-functions
 and random matrix theory}, Duke Math. J. \textbf{81},
 $1996$, $269-322$.

\bibitem[So]{So}
\newblock K. Soundararajan, \emph{Nonvanishing of quadratic
Dirichlet $L$-functions at $s=1/2$}, Ann. of Math. (2) \textbf{152}
(2000), 447--488.

\bibitem[St]{St}
\newblock J. Stopple, \emph{The quadratic character experiment}, to appear in Experimental Mathematics.

\bibitem[Yo1]{Yo1}
\newblock M. Young, \emph{Lower-order terms of the 1-level density of families of elliptic
curves},  Internat. Math. Res. Notices 2005,  no. 10, 587--633.

\bibitem[Yo2]{Yo2}
\newblock M. Young, \emph{Low-lying zeros of families of elliptic curves},
J. Amer. Math. Soc. \textbf{19} (2006), no. 1, 205--250.

\end{thebibliography}
\end{document}